\newtheorem{assumption}{Assumption}
\newtheorem{theorem}{Theorem}
\newtheorem{lemma}{Lemma}
\newtheorem{proposition}{Proposition}
\newtheorem{definition}{Definition}
\newtheorem{corollary}{Corollary}
\theoremstyle{plain}
\newtheorem{remark}{Remark}
 \newcommand{\fa}[1]{{\color{black}#1}}
\newcommand{\fy}[1]{{\color{black}#1}}
\newcommand{\fyy}[1]{{\color{black}#1}}
\newcommand{\afj}[1]{{\color{black}#1}}
\newcommand{\me}[1]{{\color{black}#1}}
\title{\LARGE \bf  Stochastic Approximation for Estimating the Price of Stability in Stochastic Nash Games}
\author{Afrooz~Jalilzadeh$^{1}$ \and Farzad~Yousefian$^{2}$ \and Mohammadjavad Ebrahimi$^{3}$
\thanks{$^1$Afrooz Jalilzadeh is an Assistant Professor of Systems and Industrial Engineering at The University of Arizona, Tucson, AZ 85721, USA. ({\tt\small afrooz@arizona.edu})}
\thanks{$^2$Farzad Yousefian is an Assistant Professor of Industrial and Systems Engineering at Rutgers University, Piscataway, NJ 08854, USA. {(\tt\small farzad.yousefian@rutgers.edu})}
\thanks{$^3$Mohammadjavad Ebrahimi is currently a PhD student of Industrial and Systems Engineering at Rutgers University, Piscataway, NJ 08854, USA. ({\tt\small mohammadjavad.ebrahimi@rutgers.edu})}}
\begin{document}
%\scrollmode
\sloppy
\maketitle
\thispagestyle{empty}
\pagestyle{plain}

\begin{abstract}
 The goal in this paper is to approximate the Price of Stability (PoS) \fa{in} stochastic Nash games using stochastic approximation (SA) schemes. PoS is amongst the most popular metrics in game theory and provides an avenue for estimating the efficiency of Nash games. In particular, knowing the value of PoS can help with designing efficient networked systems, including transportation networks and power market mechanisms. Motivated by the lack of efficient methods for computing the PoS, first we consider stochastic optimization problems with a nonsmooth and merely convex objective function and a merely monotone stochastic variational inequality (SVI) constraint. This problem appears in the numerator of the PoS \fa{ratio}. We develop a randomized block-coordinate stochastic extra-(sub)gradient method where we employ a novel iterative penalization scheme to account for the mapping of the SVI in each of the two gradient updates of the algorithm. We obtain an iteration complexity of the order $\epsilon^{-4}$ that appears to be best known result for this class of constrained stochastic optimization problems, \fa{where $\epsilon$ denotes an arbitrary bound on suitably defined infeasibility and suboptimality metrics}. Second, we develop an SA-based scheme for approximating the PoS and derive lower and upper bounds on the approximation error. To validate \fa{the} theoretical findings, we provide preliminary simulation results on a \fa{networked} stochastic Nash Cournot competition.
\end{abstract}

\section{Introduction}
\fa{The} goal in this paper lies in the development of a stochastic approximation method, equipped with performance guarantees, for computing the price of stability (PoS) ratio in monotone stochastic Nash games. Nash equilibrium (NE) is \fa{a fundamental concept} in game theory \fa{and captures} a wide range of phenomena in engineering, economics, and finance~\cite{FacchineiPang2003}. Consider a stochastic Nash game with $N$ players, each \fa{associated} with a strategy set $X_i \subseteq \mathbb{R}^{n_i}$ and a cost function $f_i$. Player $i$'s objective is to determine, for any collection of arbitrary strategies of the other players, denoted by $x^{(-i)}$, an optimal strategy $x^{(i)}$ that solves the stochastic minimization problem
\begin{align}\label{nash}\tag{P$_i(x^{(-i)})$}
 &\hbox{minimize}_{x^{(i)}} \qquad \mathbb{E}\left[f_i\left(\left(x^{(i)};x^{(-i)}\right),\xi\right)\right],\\
& \hbox{subject to}  \qquad x^{(i)} \in X_i,\notag
 \end{align} 
where $f_i\left(\left(x^{(i)};x^{(-i)}\right),\xi\right)$ denotes a random cost function associated with the $i$th player that is parameterized in terms of the action of the player $x^{(i)}$, actions of other players denoted by $x^{(-i)}$, and a random variable $\xi$, where $\xi: \Omega\to\mathbb{R}^d$ denotes a random variable associated with the probability space $(\Omega, \mathcal{F},\mathbb{P})$.
\fa{\begin{remark}\em
Throughout, similar to \cite{juditsky2011solving,Nem04,yousefian2017smoothing}, we focus on settings where the stochasticity is only present in the objective function of the players. In particular, we assume that the strategy sets are deterministic.
\end{remark}}

An NE is described as a collection of specific strategies chosen by all the players, denoted by the tuple $x\triangleq \left(x^{(1)};\ldots;x^{(N)}\right)$ where no player can reduce her cost by unilaterally changing her strategy within her feasible strategy set. Mathematically, NE can be described as a vector $x$ that satisfies, for all $i=1,\ldots,N$, \fa{the inequality given as}
\begin{align}\label{eqn:NE_ineq}
\mathbb{E}\left[f_i\left(\left(x^{(i)};x^{(-i)}\right),\xi\right)\right] \leq \mathbb{E}\left[f_i\left(\left(y^{(i)};x^{(-i)}\right),\xi\right)\right], \qquad \hbox{for all }y^{(i)} \in X_i.
\end{align}
Suppose $n$ denotes the total number of dimensions associated with an NE, i.e., $n\triangleq \sum_{i=1}^N n_i$. Let us define the set $X \subseteq \mathbb{R}^n$ as the Cartesian product of the players' strategy sets, i.e., $X\triangleq \prod_{i=1}^N X_i$. Also, under a \fa{differentiability} assumption, define the stochastic mapping $F:\mathbb{R}^n\times \mathbb{R}^d \to \mathbb{R}^n$ and its deterministic counterpart $F:\mathbb{R}^n \to \mathbb{R}^n$ as the collection of players' gradient mappings as  
$$F(x) \triangleq \mathbb{E}[F(x,\xi)], \quad \hbox{where } F(x,\xi)\triangleq \left(\nabla_{x^{(1)}}f_1(x,\xi),\ldots,\nabla_{x^{(N)}}f_N(x,\xi)\right).$$ 
Note that for expository ease, we use $F$ in naming both deterministic and stochastic mappings. Then, under the convexity of the players' objective functions, the problem of seeking an NE to the game characterized by problems \eqref{nash} for $i=1,\ldots,N$, can be compactly formulated as a stochastic variational inequalities (VI) problem, denoted by $\mbox{VI}(X,F)$. Recall that a vector $x^*\in X$ solves $\mbox{VI}(X,F)$ if 
$$(y-x^*)^T F(x^*)\geq 0, \qquad \hbox{for all } y\in X.$$
Indeed, it can be observed that the inequality above compactly captures the optimality conditions of the convex programs \eqref{eqn:NE_ineq} written for all $i=1,\ldots,N$. To this end, computing a solution to $\mbox{VI}(X,F)$ leads to finding an NE to the described stochastic Nash game.
Generally, a VI problem may admit multiple solutions leading to a collection of NEs. Throughout, we let $\mbox{SOL}(X,F)$ denote the solution set of the $\mbox{VI}(X,F)$.

%In many behavioral systems, the players are self-motivated and aim to optimize an individual objective function. As a result, the global performance of the system might be worse than the case where a central authority can simply dictate a solution. 
In this paper, our aim is to develop a provably convergent scheme for estimating the efficiency in stochastic Nash games with monotone mappings. The notion of efficiency in Nash games is a storied area of research and dates back to the celebrated Prisoner's Dilemma. In fact, Nash equilibrium is provably known to be inefficient~\cite{Dubey86}, in the sense that the competition among the players often leads to a degradation of the overall performance of the system \fa{of} players. In view of this, understanding the efficiency of an NE has received much attention in game theory. Among, the popular measures of the efficiency of NE is a metric called \emph{price of stability} (PoS) \cite{roughgarden2010algorithmic}. Given an arbitrary cost metric for quantifying the overall performance of the system, PoS is defined as the ratio between the following two quantities: (1) the minimal cost attained by the best Nash equilibrium (among possibly many NEs); (2) the optimal cost when the competition among the players is (hypothetically) suppressed. Let stochastic function $f:\mathbb R^n\times \mathbb{R}^d\to\mathbb R$ denote the system's overall performance metric. Mathematically and following our notation, PoS can be formulated \fa{as}
\begin{tcolorbox}
    %\vspace{-0.1in}
    \begin{align}\label{pos}\mbox{PoS} \ \triangleq \ \frac{\min_{x\in \tiny{\mbox{SOL}}(X,\, \mathbb E[F(\bullet,\xi)])}\mathbb E[f(x,\xi)]}{\min_{x\in X}\mathbb E[f(x,\xi)]}.
    \end{align}
     \end{tcolorbox}
\begin{remark}\em  We note that the function $f$ may or may not relate to the individual objective functions of the players denoted by $f_i$. In the literature~\cite{PoS08,JohariThesis}, different choices have been considered. Two common examples include the {\it utilitarian} approach where $f$ is defined as the summation of all players' objectives, and the {\it egalitarian} approach where $f$ is defined as the maximum of the individual objective functions.
\end{remark}

\fy{Evaluating the PoS ratio, even in deterministic problems, is a \fa{computationally} challenging task. To elaborate on this, we provide a simple example in the following.\hfill}
       \begin{wrapfigure}{r}{0.4\textwidth}
	\hspace{-.1in}
	\begin{center}
	\includegraphics[width=0.4\textwidth]{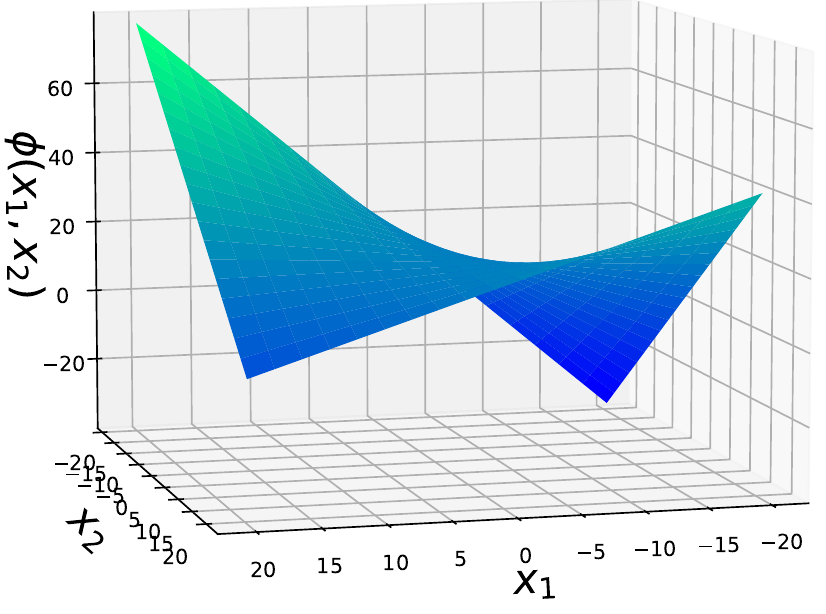} 
	\end{center}
	\vspace{-0.2in} 
	\caption{Function $\phi$ in problem \eqref{eqn:minimax_toy}}\label{fig:minmax_toy}
\end{wrapfigure}

\noindent {\bf Example (PoS in saddle-point problems).} The problem of seeking a saddle-point in minmax optimization is an important class of equilibrium problems that has received considerable attention in game theory~\cite{FacchineiPang2003,korpelevich1976extragradient,Nem04,ned09} and more recently, in adversarial learning \cite{GAN14}, fairness in machine learning \cite{fairgan18}, and distributionally robust federated learning \cite{distrobustfl20}. In fact, the canonical minmax problem can be viewed as a subclass of two-person zero-sum games. The existence of equilibrium in such a game was established by the von Neumann's minmax theorem in 1928~\cite{minimax28}. To elaborate, consider a minmax problem given as 
\begin{align}\label{eqn:minimax_toy}
\min_{11 \leq x_1\leq 60} \ \max_{10 \leq x_2\leq 50} \phi(x_1,x_2)\triangleq \ 20-0.1x_1x_2+x_1.
\end{align}
Figure~\ref{fig:minmax_toy} shows the saddle-shaped function $\phi$. Associated with problem \eqref{eqn:minimax_toy}, we can consider a pair of optimization problems as 

\noindent\begin{minipage}{.5\linewidth}
\begin{equation}\label{eqn:player1problem}
\Bigg\{\begin{aligned}
 &\hbox{minimize}_{x_1} \qquad  f_1(x_1,x_2) \triangleq 20-0.1x_1x_2+x_1 \\
& \hbox{subject to}  \qquad  x_1 \in X_1 \triangleq [11,60], 
\end{aligned}
\end{equation}
\end{minipage} 
\begin{minipage}{.5\linewidth}
\begin{equation}\label{eqn:player2problem}
\Bigg\{\begin{aligned}
 &\hbox{minimize}_{x_2} \qquad  f_2(x_1,x_2) \triangleq -20+0.1x_1x_2-x_1 \\
& \hbox{subject to}  \qquad  x_2 \in X_2 \triangleq [10,50]. 
\end{aligned}
\end{equation}
\end{minipage}
\vspace{.1in}

\noindent Problems~\eqref{eqn:player1problem} and~\eqref{eqn:player2problem} together construct a  two-person zero-sum Nash game. From~\cite[1.4.2 Proposition]{FacchineiPang2003}, the set of saddle-points are the solutions to the variational inequality problem $\mbox{VI}(X,F)$ where we define $$F(x_1,x_2) \triangleq (\nabla_{x_1} f_1(x), \nabla_{x_2} f_2(x))= (-0.1x_2+1, 0.1x_1) \quad \hbox{and} \quad X\triangleq X_1\times X_2.$$ 
Note that the mapping $F$ is merely monotone, in view of $(F(x)-F(y))^T(x-y) =0$ for all $x \in \mathbb{R}^2$ and $y \in \mathbb{R}^2$. We observe that the set of all the saddle-points is given by $\mbox{SOL}(X,F) = \{(x_1,x_2)\mid x_1 \in [11,60], \ x_2=10\}$, implying that there are infinitely many Nash equilibria to this game characterized by the convex set $\mbox{SOL}(X,F)$. To measure \fa{the} PoS, let us consider the global metric defined as $f(x_1,x_2) \triangleq 20+|x_1-x_2|$ for instance. This implies that the numerator of \fa{the} PoS in~\eqref{pos} is equal to $21$, while its denominator is equal to $20$. As such, we obtain $\hbox{PoS} = 1.05$, implying that the competition in the game leads to an $\%5$ loss in the metric $f$. Although in this simple example, we are able to evaluate the PoS, in practice, we often encounter several challenges that \fa{may} make this impossible. Two main challenges are explained as follows: (i) The solution set of the VI is often unknown. Even in deterministic settings, it is often impossible to determine the entire set $\mbox{SOL}(X,F)$; (ii) Nash games might be afflicted by the presence of uncertainty which motivates the need for leveraging Monte Carlo sampling schemes, such as stochastic approximation, for contenting with stochasticity and the large-scale of the problem. \fa{For example, in distributionally robust federated learning \cite{distrobustfl20}, the problem is cast a stochastic minmax problem where the stochasticity emerges from the probability distribution of the local data sets, privately maintained by the clients.}

To estimate the PoS with guarantees, first, we need to solve the numerator of the right-hand side of \eqref{pos} that is characterized as a stochastic optimization with a {stochastic VI} constraint. Naturally, addressing the presence of VI constraints is a challenging task in optimization. This is mainly because VI constraints do not appear to lend themselves to standard Lagrangian relaxation schemes. In this work, this challenge is exacerbated due to the presence of uncertainty in the mapping of the VI constraint. % in the \fyy{numerator} whose objective function and/or the VI operator are stochastic. 
To this end, our goal is to employ stochastic approximation (SA) schemes. SA is an iterative scheme that has been widely employed for solving problems in which the objective function is corrupted by a random noise. In the context of optimization problems, the function values and/or higher-order information are estimated from noisy samples in a Monte Carlo simulation procedure \cite{broadie2014multidimensional}. The SA scheme, first introduced by Robbins and Monro \cite{robbins51sa}, has been studied extensively in recent years for addressing stochastic optimization and stochastic variational inequality problems~\cite{Polyak92acceleration,Farzad1,juditsky2011solving,koshal12regularized}.

In addressing constrained stochastic formulations, the majority of the SA schemes in the existing literature address the standard cases where the  constraints are in the form of functional inequalities, equalities, or easy-to-project sets. However, motivated by the need for efficiency estimation in stochastic Nash games, we aim at devising a provably convergent SA method for estimation of the PoS. To this end, our primary interest lies in solving the following stochastic optimization problem whose constraint set is characterized as the solution set of a stochastic VI problem. This optimization problem is given \fa{as}
\begin{tcolorbox}
    \vspace{-0.15in}
\begin{align}\label{prob:sopt_svi} 
&\hbox{minimize} \qquad \mathbb{E}[f(x,\xi)]\\
& \hbox{subject to}  \qquad x \in \mbox{SOL}(X, \mathbb{E}[F(\bullet,\xi)]),\notag
\end{align}
\end{tcolorbox}
\noindent where  $f:\mathbb R^n\times \mathbb{R}^d\to\mathbb R$ is a convex function, $X \subseteq \mathbb{R}^n$ is the Cartesian product of the component sets $X_i \subseteq \mathbb{R}^{n_i}$ where $\sum_{i=1}^N n_i =n$, i.e., $X \triangleq \prod\nolimits_{i=1}^NX_i$. We let {the $i$th block-coordinate of the mapping $F(\bullet,\xi)$ be denoted by} $F_i:\mathbb R^n\times \mathbb{R}^d\to \mathbb R^{n_i}$ for any $i\in [N]\triangleq \{1,\ldots,N\}$. %Recall that given a set $Y$ and mapping $H$, $y^* \in Y$ solves the variational inequality problem, denoted by $\mbox{VI}(Y,H)$, if is given by $H(y^*)^T(y-y^*) \geq 0$, for all $y \in Y$. The set $\mbox{SOL}(Y,H)$ denotes the solution set of $\mbox{VI}(Y,H)$.
{As noted earlier, f}or the ease of presentation, throughout we %let $\xi$ abbreviate $\xi(\omega)$, and 
define $f(x) \triangleq \mathbb{E}[f(x,\xi)]$ and $F(x) \triangleq \mathbb{E}[F(x,\xi)]$. 

%Next, we introduce important notations that we use throughout the paper and then briefly summarize the related research.
%\subsection{Notations}
%
%\subsection{Related works}
{{\bf Existing literature on VIs.}} The variational inequality problem has been {extensively} studied in the literature due to its versatility in {capturing} a wide range of problems including optimization, equilibrium and complementarity problems, amongst others \cite{FacchineiPang2003}. The extra-gradient method, initially proposed by Korpelevich \cite{korpelevich1976extragradient} and its extensions \cite{censor2011subgradient,censor2012extensions,chen2017accelerated,iusem2011korpelevich,juditsky2011solving,yousefian2014optimal,yousefian2018stochastic}, is a classical method for solving VI problems which requires weaker assumptions than {standard gradient schemes} \cite{bertsekas1997nonlinear,sibony1970methodes}. In stochastic {problems}, amongst the earliest schemes for resolving stochastic variational inequalities via stochastic approximation was presented
by Jiang and Xu \cite{jiang2008stochastic} {under the strong monotonicity and smoothness assumptions of the mapping}. Regularized variants {of SA schemes} were developed by Koshal et al.~\cite{koshal12regularized} for {addressing stochastic VIs with} merely monotone {mappings. Further, smoothness requirements were weakened by leveraging randomized smoothing in~\cite{yousefian2013regularized,yousefian2017smoothing}}. In the absence of {strong monotonicity}, extra-gradient approaches that rely on two projections per iteration provide an avenue for resolving merely monotone problems~\cite{jalilzadeh2019proximal}. The per-iteration complexity can be reduced to a single projection via projected reflected gradient and
splitting techniques as examined in~\cite{cui2016analysis,cui2021analysis} (also see~\cite{hsieh2019convergence}). When the assumption on the
mapping is {weakened} to pseudomonotonicity and its variants, rate statements have been provided
in \cite{iusem2017extragradient,kannan2014pseudomonotone,kannan2019optimal} via a stochastic extra-gradient framework. 

{{\bf Gap in the literature.}} Despite these advances in {addressing VIs and their stochastic variants}, solving problem \eqref{prob:sopt_svi} remains challenging. {In fact, we are unaware of any provably convergent stochastic approximation method for solving problem \eqref{prob:sopt_svi} that appears to be essential in estimating the PoS, defined as~\eqref{pos}}. One main approach to solve \eqref{prob:sopt_svi}, when the constraint set is {the} solution {set} of {a} deterministic VI and {the} objective function is also deterministic, is {the} sequential regularization (SR) approach which is a two-loop framework (see \cite[Chapter 12]{FacchineiPang2003}). In each iteration of the SR scheme, a regularized VI is required to be solved and convergence has been shown under the monotonicity of the mapping $F$ and closedness and convexity of the set $X$. However, the iteration complexity of the SR algorithm is unknown and it requires solving a series of increasingly more difficult VI problems. To resolve these shortcomings, recently, Kaushik and Yousefian \cite{kaushik2021method} developed a more efficient first-order method called averaging randomized block iteratively regularized gradient. Non-asymptotic suboptimality and infeasibility convergence {rates} of $\mathcal O(1/K^{0.25})$ {have} been obtained where $K$ is the total number of iterations. Here, we consider a more general problem with a stochastic objective function and a stochastic VI constraint. {Employing} a novel iterative penalization technique, we propose an extra-(sub)gradient-based {SA method and we derive convergence results in expectation, of the same order of magnitude} as in \cite{kaushik2021method}, despite the presence of stochasticity in \fa{the} {both} levels of the problem. 

{\bf Main contributions.} In this paper, we study a stochastic optimization problem with a nonsmooth and {merely} convex objective function and a {constraint set characterized as the solution set of a stochastic variational inequality problem}. {Motivated by the absence of efficient and scalable SA methods for addressing this class of constrained stochastic optimization problems, we} develop {a single-timescale first-order stochastic approximation method with block-coordinate updates}, called Averaging Randomized Iteratively Penalized Stochastic Extra-Gradient Method (aR-IP-SeG). We {derive convergence rates in terms of suitably defined metrics for} suboptimality and infeasibility. In particular, in {Theorem~\ref{thm:rates}}, we obtain an iteration complexity of the order \fa{of} $\epsilon^{-4}$ {where $\epsilon$ denotes a user-specified bound on both the objective function's error and a suitably defined infeasibility metric (i.e., dual gap function). This iteration complexity} appears to be best known result for this class of constrained stochastic optimization problems. Moreover, utilizing the proposed extra-(sub)gradient-based method, we derive lower and upper bounds, {both of the order \fa{$1/K^{0.25}$}, for approximating the price of stability}. Such guarantees appear to be new in computing the PoS (see Lemma~\ref{prop:pos_bounds}).

{\bf Outline of the paper.} Next, we introduce the notation that we use throughout the paper. In the next section, we precisely state the main definitions and assumptions that we need for the convergence analysis. In Section~\ref{sec:alg}, we describe the {aR-IP-SeG} algorithm  to solve problem~\eqref{prob:sopt_svi} and the complexity analysis is provided in Section~\ref{sec:rate}. Additionally, in Section~\ref{sec:pos}, we propose {a scheme} to  approximate the price of stability in \eqref{pos} with guarantees. Finally, some empirical experiments are presented in Section~\ref{sec:num} {for addressing  a stochastic Nash Cournot competition over a network where we compare} our proposed scheme {with the few existing} schemes {that can be employed for} estimating the PoS.

{{\bf Notation.}} {Throughout, we often use column vectors to present the algorithms and discuss the convergence analysis. For a convex function $h:\mathbb{R}^n\rightarrow\mathbb{R}$ with the domain $\text{dom}(h)$ and  any $x\in\text{dom}(h)$, a vector $ \tilde{\nabla}h(x) \in\mathbb{R}^n$ is called a subgradient of $h$ at $x$ if $h(x) +\fa{\tilde{\nabla}h(x)^T(y-x)}\leq h(y)$ for  all $y\in \text{dom}(h)$. We let $\partial h(x)$ denote the subdifferential set of  function $h$ at $x$. Given a vector $x \in \mathbb{R}^n$, we use $x^{(i)} \in \mathbb{R}^{n_i}$ to denote its $i$th block-coordinate. We let $\tilde{\nabla_i}h(x)$ denote the $i$th block-coordinate of $\tilde{\nabla}h(x)$. We use similar notation for referring to the $i$th block-coordinate of mappings. We let} $\mathbb{E}[\bullet]$ {denote} the expectation with respect to the all probability distributions under study. We use \fa{filtration} to take conditional expectations with respect to a subgroup of probability distributions. We denote the optimal objective value of the problem~\eqref{prob:sopt_svi} by $f^*$. The Euclidean projection of vector $x$ onto a convex set $X$ is denoted by $\mathcal P_X(x)$, where $\mathcal P_X(x)\triangleq \mbox{argmin}_{y\in X}\|y-x\|^2$. Throughout the paper, unless specified otherwise, $k$ denotes the iteration counter while $K$ represents the total number of steps employed in the proposed methods. {Moreover, we define $\mbox{dist}(x,X)\triangleq \min_{y\in X}\|y-x\|$.}

\section{Algorithm Outline}\label{sec:alg}
{Our goal in this section is to devise an SA scheme for solving problem~\eqref{prob:sopt_svi}. To this end, we develop a method, called} Averaging Randomized Iteratively Penalized Stochastic Extra-Gradient (aR-IP-SeG) presented {by} Algorithm~\ref{algorithm:aR-IP-SeG}. Compared with standard extra-gradient methods, a key novelty in the design of {aR-IP-SeG} lies in how we iteratively penalize the stochastic mapping of the VI using the parameter $\rho_k$. Intuitively, this is done to penalize the infeasibility of the generated iterate in terms of the stochastic VI constraint in problem~\eqref{prob:sopt_svi}. At each iteration $k$, we select indices $i_k$ and $\tilde i_k$ uniformly at random and update only the corresponding blocks of the variables $y_k$ and $x_k$ by taking a step in a negative direction of the partial sample subgradient $\tilde\nabla_i f(\bullet,\xi_k)$ and sample map $F_i(\bullet, \xi_k)$ for $i=i_k$ and $\tilde i_k$. Then, we {compute} the projection onto sets $X_{i_k}$ and $X_{\tilde i_k}$. \fa{Note that each player is associated with multi-dimensional strategies, denoted by $n_i$ for $i=1,\ldots, N$, where $\sum_{i=1}^N n_i=n$. Also, at each iteration, a player is randomly chosen to update her/his full block of strategy. Also,} $\gamma_k$ and $\rho_k$ denote the stepsize and the {penalty} parameter, respectively. Finally, the output of the proposed algorithm is a weighted average of the generated sequence {$\{y_k\}$}. {This is done in a novel way through incorporating both the stepsize and the penalty parameter into averaging weights.}
\begin{algorithm}
  \caption{Averaging Randomized Iteratively Penalized Stochastic Extra-Gradient Method (aR-IP-SeG)}
\label{algorithm:aR-IP-SeG}
%\begin{boxedminipage}{165mm}
    \begin{algorithmic}[1]
    \STATE \textbf{initialization:} Set random initial points $x_0, y_0\in X$, {an initial} stepsize $\gamma_0>0$, {an initial penalty parameter $\rho_0>0$} a scalar $r<1$, $\bar{y}_0= y_0$, and $\Gamma_0=0$.
    \FOR {$k=0,1,\ldots,K-1$}
     \STATE Generate $i_k$ and $\tilde i_k$ uniformly from $\{1,\ldots,N\}$.
     \STATE Generate $\xi_k$ and $\tilde \xi_k$ as realizations of the random vector $\xi$.
      \STATE Update the variables $y_k$ and $x_k$ \fa{as}
\begin{align}
y_{k+1}^{(i)}&:= \left\{\begin{array}{ll}\mathcal{P}_{X_i}\left(x_{k}^{(i)}-\gamma_k(\tilde \nabla_i f(x_k,\tilde \xi_k) + \rho_k F_{i}(x_{k},\tilde \xi_k))\right)
&\hbox{if } i=\tilde i_k,\cr \hbox{} &\hbox{}\cr
x_k^{(i)}& \hbox{if } i\neq \tilde i_k,\end{array}\right.\label{eqn:y_k_update_rule}
\\
&\hbox{} \notag\\
x_{k+1}^{(i)}&:=\left\{\begin{array}{ll}\mathcal{P}_{X_i}\left(x_{k}^{(i)} - \gamma_k(\tilde \nabla_i f(y_{k+1}, \xi_k) + \rho_k F_{i}(y_{k+1}, \xi_k))\right)
&\hbox{if } i=i_k,\cr \hbox{} &\hbox{}\cr
x_k^{(i)}& \hbox{if } i\neq i_k.\end{array}\right.\label{eqn:x_k_update_rule}
\end{align}
     \STATE Update $\Gamma_k$ and $\bar y_{k}$ using the following recursions\fa{:}
\begin{align}
&\Gamma_{k+1}:=\Gamma_k+(\gamma_k\rho_k)^r,\label{eqn:averaging_eq1}\\
&\bar y_{k+1}:=\frac{\Gamma_k \bar y_k+(\gamma_k\rho_k)^r y_{k+1}}{\Gamma_{k+1}}.\label{eqn:averaging_eq2}
\end{align}
        \ENDFOR
     \STATE Return $\bar y_{K}$.
   \end{algorithmic}
%\end{boxedminipage}
% \vskip17.5pt
\end{algorithm}
 
%\noindent \textbf{Notation.} We let $f^*$ denote the optimal objective value of the problem~\eqref{prob:sopt_svi}.
Throughout the paper, we consider the following assumptions on the map $F$, objective function $f$ and set $X$ in problem~\eqref{prob:sopt_svi}. 
\begin{assumption}[Problem properties]\label{assum:problem} \em Consider problem \eqref{prob:sopt_svi}. Let the following holds.

\noindent (i) Mapping $F(\bullet):\mathbb{R}^n \to \mathbb{R}^{n}$ is {vector-valued}, continuous, and merely monotone on its domain, {i.e., for all $x,y\in \mbox{dom}(F),$ ${(F(x)-F(y))^T(x-y)}\geq0 $}. 

\noindent (ii) Function $f(\bullet):\mathbb{R}^n \to \mathbb{R}$ is {closed, proper, and} merely convex on its domain. 

\noindent (iii) Set $X \subseteq \mbox{int}\left(\mbox{dom}(F)\cap\mbox{dom}(f)\right)$ is nonempty, compact, and convex. 
\end{assumption}

\begin{remark}\label{rem:bounds}\em
In view of Assumption \ref{assum:problem}, the subdifferential set 	$\partial f(x)$ is nonempty for all $x \in \mbox{int}(\mbox{dom}(f))$. Also, $f$ has bounded subgradients over $X$. Throughout, we let scalars $D_X$ and $D_f$ be defined as $D_X\triangleq \sup_{x \in X} \|x\|$ and $D_f\triangleq \sup_{x \in X} |f(x)|$, respectively. Also, we let $C_F>0$ and $C_f>0$ be scalars such that $\|F(x)\|\leq C_F$, and $\|\tilde \nabla f(x)\|\leq C_f$ for all $\tilde \nabla f(x)\in \partial f(x)$, for all $x \in X$. 
\end{remark}
Next, we impose {some standard conditions} on the conditional bias and the conditional second moment on
the sampled subgradient $\tilde \nabla f(\bullet,{\xi})$ and sampled map $F(\bullet,{\xi})$ produced by the oracle.
\begin{assumption}[Random samples]\label{assum:rnd_vars}\em
\noindent (a) The random samples $\tilde \xi_k$ and $\xi_k$ are i.i.d., and $\tilde i_k$ and $i_k$ are i.i.d. from the range $\{1,\ldots,N\}$. Also, all these random variables are independent from each other. 

\noindent (b) For all $k\geq 0$ the stochastic mappings $F(\bullet,\tilde \xi_k)$ and $F(\bullet,\xi_k)$ are both unbiased estimators of $F(\bullet)$. Similarly, 
%the stochastic functions $f(\bullet,\tilde \xi_k)$ and $f(\bullet,\xi_k)$ are both unbiased estimators of $f(\bullet)$ 
{ $\tilde \nabla f(\bullet,\tilde \xi_k)$ and $\tilde \nabla f(\bullet,\xi_k)$ are both unbiased estimators of $\tilde \nabla f(\bullet)$}.

\noindent (c) For all \fa{$x \in X$}, there exist $\fa{\nu_{F}},\nu_{f}>0$ such that $\mathbb{E}[\|F(x,\xi) - F(x)\|^2 \mid x] \leq\nu_{F}^2$ and $\mathbb{E}[\|\tilde \nabla f(x, \xi) - \tilde \nabla f(x)\|^2 \mid x] \leq\nu_{f}^2$.
\end{assumption}
{\begin{remark}\label{rem:bounds2}\em
Under Assumption~\ref{rem:bounds}, we can write 
$\mathbb{E}[\|F(x,\xi)\|^2 \mid x] = \mathbb{E}[\|F(x,\xi) - F(x)\|^2 \mid x] +\|F(x)\|^2  \leq \nu^2_F +C_F^2$, where we use Remark~\ref{rem:bounds}. Similarly, we have that $\mathbb{E}[\|\tilde \nabla f(x,\xi)\|^2 \mid x]  \leq \nu^2_f +C_f^2$.
\end{remark}}
\fa{\begin{remark} \em
In the case when the stochastic VI represents a Nash game, we assume that each player has access to stochastic gradient of its objective as well as stochastic gradient of the global function $f$.
\end{remark}}
\section{Preliminaries and Background}\label{sec:assump}
%\noindent \textbf{Notation.} We let $f^*$ denote the optimal objective value of the problem~\eqref{prob:sopt_svi}.

\begin{definition}\label{def:hist}
We denote the history of the method by $\mathcal{F}_k$ for $k \geq 0$ defined as
\begin{align*}
\mathcal{F}_k \triangleq \cup_{t=0}^k\{\tilde \xi_t,\tilde i_t,  \xi_t,  i_t\} \cup \{x_0,y_0\}.
\end{align*}
\end{definition}
Next, we define the errors for stochastic approximation of objective function $f$ and operator $F$, and block-coordinate sampling. In particular, we use {the terms} $w_{\bullet,k}$ and $\tilde w_{\bullet,k}$ {to denote} the errors of stochastic approximation {involved at} iteration $k$ and {similarly, the terms} $e_{\bullet,k}$ and $\tilde e_{\bullet,k}$ for the errors of block-coordinate sampling. 
\begin{definition}[Stochastic errors]\label{def:errs}
For all $k \geq 0$ we define 

\noindent
\begin{minipage}[t]{.4\linewidth}
 \begin{itemize}
\item [] $\tilde{w}_{F,k} \triangleq F(x_k,\tilde \xi_k) - F(x_k)$,
\item  [] ${w}_{F,k} \triangleq F(y_{k+1}, \xi_k) - F(y_{k+1})$, 
\item  [] $\tilde{e}_{F,k} \triangleq N\mathbf{U}_{\tilde i_k}F_{\tilde i_k}(x_k,\tilde \xi_k) - F(x_k,\tilde \xi_k)$, 
\item [] ${e}_{F,k} \triangleq N\mathbf{U}_{ i_k}F_{ i_k}(y_{k+1},\xi_k) - F(y_{k+1}, \xi_k)$.
\end{itemize}
\end{minipage}\hfill
\begin{minipage}[t]{.6\linewidth}
 \begin{itemize}
\item [] $\tilde{w}_{f,k} \triangleq \tilde \nabla f(x_k,\tilde \xi_k) -\tilde \nabla f(x_k)$,
\item [] ${w}_{f,k} \triangleq \tilde \nabla f(y_{k+1}, \xi_k) -\tilde \nabla f(y_{k+1})$, 
\item [] $\tilde{e}_{f,k} \triangleq N\mathbf{U}_{\tilde i_k}\tilde \nabla_{\tilde i_k} f(x_k,\tilde \xi_k) -\tilde \nabla f(x_k,\tilde \xi_k)$, 
\item [] ${e}_{f,k} \triangleq N\mathbf{U}_{ i_k}\tilde \nabla_{ i_k} f(y_{k+1}, \xi_k) -\tilde \nabla f(y_{k+1}, \xi_k)$.
\end{itemize}
\end{minipage}
where $\mathbf{U}_\ell \in \mathbb{R}^{n\times n_\ell}$ for $\ell \in [N]$ such that $[\mathbf{U}_1,\ldots,\mathbf{U}_N]=\mathbf{I}_n$ where $\mathbf{I}_n$ denotes the $n \times n$ identity matrix. 
\end{definition}
{Based on the above definitions,} we state some standard properties of the errors. 
\begin{lemma}[Properties of stochastic approximation and random blocks]\label{lemma:prop_rnd_blcks}\em
Consider $\tilde{e}_{F,k}$, $\tilde{e}_{f,k}$, ${e}_{F,k}$, and ${e}_{f,k}$ given by Definition \ref{def:errs}. Let Assumption \ref{assum:rnd_vars} hold. Then, the following statements hold almost surely for all $k \geq 0$

\noindent
\begin{minipage}[t]{.4\linewidth}
 \begin{itemize}
\item [(a-i)]  $\mathbb{E}[\tilde{w}_{F,k}\mid \mathcal{F}_{k-1}]=0$,
\item [(a-ii)] $\mathbb{E}[\tilde{w}_{f,k}\mid \mathcal{F}_{k-1}]=0$, 
\item [(a-iii)] $\mathbb{E}[{w}_{F,k}\mid \mathcal{F}_{k-1}\cup\{\tilde \xi_k,\tilde{i}_k\}]=0$, 
\item [(a-iv)] $\mathbb{E}[{w}_{f,k}\mid \mathcal{F}_{k-1}\cup\{\tilde \xi_k,\tilde{i}_k\}]=0$.
\end{itemize}

 \begin{itemize}
\item [(b-i)] $\mathbb{E}[\|\tilde{w}_{F,k}\|^2 \mid \mathcal{F}_{k-1}] \leq\nu_{F}^2$,
\item [(b-ii)] $\mathbb{E}[\|\tilde{w}_{f,k}\|^2\mid \mathcal{F}_{k-1}] \leq \nu_{f}^2$, 
\item [(b-iii)] $\mathbb{E}[\|{w}_{F,k}\|^2\mid \mathcal{F}_{k-1}\cup\{\tilde \xi_k,\tilde{i}_k\}]\leq \nu_{F}^2$, 
\item [(b-iv)] $\mathbb{E}[\|{w}_{f,k}\|^2\mid \mathcal{F}_{k-1}\cup\{\tilde \xi_k,\tilde{i}_k\}]\leq \nu_{f}^2$.
\end{itemize}
\end{minipage}\hfill
\begin{minipage}[t]{.6\linewidth}
 \begin{itemize}
\item [(c-i)] $\mathbb{E}[\tilde{e}_{F,k}\mid \mathcal{F}_{k-1}\cup\{\tilde \xi_k\}]=0 $,
\item [(c-ii)] $\mathbb{E}[\tilde{e}_{f,k}\mid \mathcal{F}_{k-1}\cup\{\tilde \xi_k\}]=0 $, 
\item [(c-iii)] $ \mathbb{E}[{e}_{F,k}\mid \mathcal{F}_{k-1}\cup\{\tilde \xi_k,\tilde{i}_k,\xi_k\}]=0$, 
\item [(c-iv)] $ \mathbb{E}[{e}_{f,k}\mid \mathcal{F}_{k-1}\cup\{\tilde \xi_k,\tilde{i}_k,\xi_k\}]=0 $.
\end{itemize}
 \begin{itemize}
\item [(d-i)] $\mathbb{E}[\|\tilde{e}_{F,k}\|^2\mid \mathcal{F}_{k-1}\cup\{\tilde \xi_k\}] = (N-1)\|F(x_k,\tilde \xi_k)\|^2$,
\item [(d-ii)] $\mathbb{E}[\|\tilde{e}_{f,k}\|^2\mid \mathcal{F}_{k-1}\cup\{\tilde \xi_k\}] = (N-1)\|\tilde{\nabla} f(x_k,\tilde \xi_k)\|^2$, 
\item [(d-iii)] $\mathbb{E}[\|{e}_{F,k}\|^2\mid \mathcal{F}_{k-1}\cup\{\tilde \xi_k,\tilde{i}_k,\xi_k\}]=(N-1)\|F(y_{k+1}, \xi_k)\|^2$, 
\item [(d-iv)] $\mathbb{E}[\|{e}_{f,k}\|^2\mid \mathcal{F}_{k-1}\cup\{\tilde \xi_k,\tilde{i}_k,\xi_k\}] =(N-1)\|\tilde{\nabla} f(y_{k+1}, \xi_k)\|^2$.
\end{itemize}
\end{minipage}
\end{lemma}
\begin{proof}\em 
(a) {From assumption} that $\tilde \nabla f(\bullet,\tilde \xi)$ and $F(\bullet,\tilde \xi)$ are unbiased estimators of $\tilde \nabla f(\bullet)$ and $F(\bullet)$, respectively, we have that $\mathbb{E}[\tilde{w}_{F,k}\mid \mathcal{F}_{k-1}]= \mathbb{E}[\tilde{w}_{f,k}\mid \mathcal{F}_{k-1}]=0$. Moreover, from Assumption \ref{assum:problem} {(i)}, since random samples $\tilde \xi_i$ and $\tilde i_k$ are independent from $\xi_k$, one can {conclude} that $\mathbb{E}[{w}_{F,k}\mid \mathcal{F}_{k-1}\cup\{\tilde \xi_k,\tilde{i}_k\}]=\mathbb{E}[{w}_{f,k}\mid \mathcal{F}_{k-1}\cup\{\tilde \xi_k,\tilde{i}_k\}]=0$.

\noindent (b) Using the same argument in part (a) and {invoking} Assumption~\ref{assum:problem} {(iii)}, the results follow.   

\noindent (c) {Note that} $\tilde e_{F,k}$ is the error of block-coordinate sampling of $\tilde i_k$ and since $\tilde\xi_k$ and $\tilde i_k$ are independent, {we have} that $$\mathbb{E}\left[N\mathbf{U}_{\tilde i_k}F_{\tilde i_k}(x_k,\tilde \xi_k) \mid \mathcal{F}_{k-1}\cup\{\tilde \xi_k\}\right]={1\over N}\sum_{i=1}^N N{\mathbf{U}}_{{i}}F_{{i}}(x_k,\tilde \xi_k)={F(x_k,\tilde \xi_k)}.$$ \fyy{Hence, we have} $\mathbb{E}[\tilde{e}_{F,k}\mid \mathcal{F}_{k-1}\cup\{\tilde \xi_k\}]=0$. Similarly, \fyy{we have} $\mathbb{E}[\tilde{e}_{f,k}\mid \mathcal{F}_{k-1}\cup\{\tilde \xi_k\}]=0$. Moreover, using the same argument and the fact that $i_k$ is independent \fyy{from} $\tilde\xi_k,\tilde i_k$ and $\xi_k$, \fyy{we obtain}   $$\mathbb{E}[{e}_{F,k}\mid \mathcal{F}_{k-1}\cup\{\tilde \xi_k,\tilde{i}_k,\xi_k\}]= \mathbb{E}[{e}_{f,k}\mid \mathcal{F}_{k-1}\cup\{\tilde \xi_k,\tilde{i}_k,\xi_k\}]=0.$$

\noindent (d) \fyy{We can write}  $$\mathbb{E}[\|\tilde{e}_{F,k}\|^2\mid \mathcal{F}_{k-1}\cup\{\tilde \xi_k\}]= \|F(x_k,\tilde\xi_k)\|^2+N\sum_{i=1}^N \|\fyy{\bf U_i}F_i(x_k,\tilde\xi_k)\|^2-2\fyy{\|F(x_k,\tilde\xi_k)\|^2}=(N-1)\|F(x_k,\tilde\xi_k)\|^2.$$ 
\fyy{The other relations} in part (d) can be shown using the same approach. 
\end{proof}
\begin{corollary}\label{cor:exp_terms}\em
Consider $\tilde{e}_{F,k}$, $\tilde{e}_{f,k}$, ${e}_{F,k}$, and ${e}_{f,k}$ given by Definition \ref{def:errs}. Let Assumption \ref{assum:rnd_vars} hold. Then, the following statements hold almost surely for all $k \geq 0$ 

\noindent
\begin{minipage}[t]{.4\linewidth}
 \begin{itemize}
\item [(a-i)]  $\mathbb{E}[\tilde{w}_{F,k} ]=0$,
\item [(a-ii)] $\mathbb{E}[\tilde{w}_{f,k} ]=0$, 
\item [(a-iii)] $\mathbb{E}[{w}_{F,k} ]=0$, 
\item [(a-iv)] $\mathbb{E}[{w}_{f,k} ]=0$.
\end{itemize}

 \begin{itemize}
\item [(b-i)] $\mathbb{E}[\|\tilde{w}_{F,k}\|^2  ] \leq\nu_{F}^2$,
\item [(b-ii)] $\mathbb{E}[\|\tilde{w}_{f,k}\|^2 ] \leq \nu_{f}^2$, 
\item [(b-iii)] $\mathbb{E}[\|{w}_{F,k}\|^2 ]\leq \nu_{F}^2$, 
\item [(b-iv)] $\mathbb{E}[\|{w}_{f,k}\|^2 ]\leq \nu_{f}^2$.
\end{itemize}
\end{minipage}\hfill
\begin{minipage}[t]{.6\linewidth}
 \begin{itemize}
\item [(c-i)] $\mathbb{E}[\tilde{e}_{F,k} ]=0 $,
\item [(c-ii)] $\mathbb{E}[\tilde{e}_{f,k}]=0 $, 
\item [(c-iii)] $ \mathbb{E}[{e}_{F,k} ]=0$, 
\item [(c-iv)] $ \mathbb{E}[{e}_{f,k} ]=0 $.
\end{itemize}
 \begin{itemize}
\item [(d-i)] $\mathbb{E}[\|\tilde{e}_{F,k}\|^2\ ] \leq (N-1)\fyy{(\nu_F^2+C_F^2)}$,
\item [(d-ii)] $\mathbb{E}[\|\tilde{e}_{f,k}\|^2 ] \leq (N-1)\fyy{(\nu_f^2+C_f^2)}$, 
\item [(d-iii)] $\mathbb{E}[\|{e}_{F,k}\|^2 ]\leq (N-1)\fyy{(\nu_F^2+C_F^2)}$, 
\item [(d-iv)] $\mathbb{E}[\|{e}_{f,k}\|^2 ] \leq (N-1)\fyy{(\nu_f^2+C_f^2)	}$.
\end{itemize}
\end{minipage}
%\noindent (a) $= \mathbb{E}[\tilde{w}_{f,k}]=\mathbb{E}[{w}_{F,k}]= \mathbb{E}[{w}_{f,k}]=0$.\\
%\noindent (b) $\mathbb{E}[\|\tilde{w}_{F,k}\|^2  ] \leq\nu_{F}^2$, $\mathbb{E}[\|\tilde{w}_{f,k}\|^2 ] \leq \nu_{f}^2$, $\mathbb{E}[\|{w}_{F,k}\|^2 ]\leq \nu_{F}^2$, and $\mathbb{E}[\|{w}_{f,k}\|^2 ]\leq \nu_{f}^2$.\\
%\noindent (c) $\mathbb{E}[\tilde{e}_{F,k} ]= \mathbb{E}[\tilde{e}_{f,k}]=\mathbb{E}[{e}_{F,k}]= \mathbb{E}[{e}_{f,k}]=0$.\\
%\noindent (d) $\mathbb{E}[\|\tilde{e}_{F,k}\|^2] \leq (N-1)C_F^2$, $\mathbb{E}[\|\tilde{e}_{f,k}\|^2] \leq (N-1)C_f^2$, $\mathbb{E}[\|{e}_{F,k}\|^2]\leq(N-1)C_F^2$, and $\mathbb{E}[\|{e}_{f,k}\|^2] \leq(N-1)C_f^2$.
\end{corollary}
\begin{proof}\em 
\fyy{The inequalities} (a-c) follow from \fyy{taking expectations on both sides of} the results in parts (a-c) of Lemma \ref{lemma:prop_rnd_blcks} \fyy{and invoking the law of total expectation}. \fyy{We can show (d-i) as follows: (i) taking expectations with respect to $\tilde{\xi_k}$ on both sides of (d-i) in Lemma \ref{lemma:prop_rnd_blcks}; (ii) applying Remark~\ref{rem:bounds2}; (iii) lastly, \fa{taking} expectations with respect to $\mathcal{F}_{k-1}$ on both sides of the resulting inequality in (ii). This will complete the proof of (d-i) in Corollary~\ref{cor:exp_terms}. Similarly, we can show (d-ii), (d-iii), and (d-iv) in Corollary~\ref{cor:exp_terms}.}  
\end{proof}

In the following lemma, we show that the update rules \eqref{eqn:y_k_update_rule} and \eqref{eqn:x_k_update_rule} in Algorithm \ref{algorithm:aR-IP-SeG} can be written compactly \fyy{in terms of} the full subgradient $\tilde \nabla f$ and map $F$ \fyy{following the terms introduced in} Definition \ref{def:errs}. 
\begin{lemma}[Compact representation of the scheme]\label{lemma:compact_alg}\em Consider Algorithm~\ref{algorithm:aR-IP-SeG}. The update rules \eqref{eqn:y_k_update_rule} and \eqref{eqn:x_k_update_rule} can be compactly written as 
\begin{align*}
&y_{k+1} = \mathcal{P}_X\left(x_k-N^{-1}\gamma_k\left(\tilde \nabla f(x_k)+\tilde{w}_{f,k}+\tilde{e}_{f,k}+\rho_kF(x_k)+\rho_k\tilde{w}_{F,k}+\rho_k\tilde{e}_{F,k}\right)\right)\\
&x_{k+1} = \mathcal{P}_X\left(x_k-N^{-1}\gamma_k\left(\tilde \nabla f(y_{k+1})+{w}_{f,k}+e_{f,k}+\rho_kF(y_{k+1})+\rho_k{w}_{F,k}+\rho_k{e}_{F,k}\right)\right).
\end{align*}
\end{lemma}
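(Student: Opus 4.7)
The plan is to verify the claimed compact form by a direct algebraic manipulation, exploiting (i) the identities built into Definition \ref{def:errs} that rewrite the sampled block terms in terms of the full maps plus errors, and (ii) the product structure $X=\prod_{i=1}^N X_i$, which makes the projection onto $X$ separable into block projections.

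First, I would invert the defining identities for the errors to express the sampled coordinate outputs in closed form. From Definition \ref{def:errs}, one immediately has $F(x_k,\tilde\xi_k)=F(x_k)+\tilde w_{F,k}$ and $N\mathbf{U}_{\tilde i_k}F_{\tilde i_k}(x_k,\tilde\xi_k)=F(x_k,\tilde\xi_k)+\tilde e_{F,k}$, so combining these gives
\begin{align*}
\mathbf{U}_{\tilde i_k}F_{\tilde i_k}(x_k,\tilde\xi_k)=N^{-1}\bigl(F(x_k)+\tilde w_{F,k}+\tilde e_{F,k}\bigr),
\end{align*}
and an entirely analogous identity for $\mathbf{U}_{\tilde i_k}\tilde\nabla_{\tilde i_k}f(x_k,\tilde\xi_k)$. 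Plugging these into the argument of the projection in the claimed compact form for $y_{k+1}$ shows that
\begin{align*}
x_k-N^{-1}\gamma_k\bigl(\tilde\nabla f(x_k)+\tilde w_{f,k}+\tilde e_{f,k}+\rho_kF(x_k)+\rho_k\tilde w_{F,k}+\rho_k\tilde e_{F,k}\bigr)
=x_k-\gamma_k\mathbf{U}_{\tilde i_k}\bigl(\tilde\nabla_{\tilde i_k}f(x_k,\tilde\xi_k)+\rho_kF_{\tilde i_k}(x_k,\tilde\xi_k)\bigr).
\end{align*}
Crucially, since $[\mathbf{U}_1,\dots,\mathbf{U}_N]=\mathbf{I}_n$, the vector $\mathbf{U}_{\tilde i_k}(\cdot)$ is zero in every block $i\neq\tilde i_k$ and equal to the bracketed term in block $\tilde i_k$.

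Second, I would invoke the Cartesian product structure of $X$. Because $X=\prod_{i=1}^N X_i$, the Euclidean projection $\mathcal{P}_X$ acts block-wise: $\mathcal{P}_X(z)^{(i)}=\mathcal{P}_{X_i}(z^{(i)})$ for each $i$. Applying this to the vector displayed above yields, for $i\neq\tilde i_k$, the $i$th block equal to $\mathcal{P}_{X_i}(x_k^{(i)})=x_k^{(i)}$ (using $x_k^{(i)}\in X_i$, which holds by induction since $x_0\in X$ and the updates only apply $\mathcal{P}_{X_i}$), and, for $i=\tilde i_k$, the block equal to
\begin{align*}
\mathcal{P}_{X_{\tilde i_k}}\bigl(x_k^{(\tilde i_k)}-\gamma_k\bigl(\tilde\nabla_{\tilde i_k}f(x_k,\tilde\xi_k)+\rho_kF_{\tilde i_k}(x_k,\tilde\xi_k)\bigr)\bigr),
\end{align*}
which is exactly \eqref{eqn:y_k_update_rule}. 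This verifies the first claimed identity. The second identity for $x_{k+1}$ is obtained in identical fashion, using the analogous definitions of $w_{f,k},e_{f,k},w_{F,k},e_{F,k}$ in terms of the evaluations at $y_{k+1}$ and the block index $i_k$, together with the observation that $x_k^{(i)}\in X_i$ is again a fixed point of $\mathcal{P}_{X_i}$.

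I do not anticipate a genuine technical obstacle here: the lemma is a bookkeeping identity whose content is entirely algebraic. The only point that warrants care is keeping the two sources of randomness distinct throughout the computation — the stochastic sample error (the $w$'s) and the block-coordinate sampling error (the $e$'s) — and making sure the factor of $N$ produced by the definition of the $e$'s cancels exactly with the $N^{-1}$ multiplying $\gamma_k$ in the compact representation, so that the final block-$\tilde i_k$ coefficient matches $\gamma_k$ rather than $\gamma_k/N$ or $N\gamma_k$.
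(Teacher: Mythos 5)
Your proof is correct and follows essentially the same route as the paper's (two-line) argument: use the block-separability $\mathcal{P}_X(\bullet)=[\mathcal{P}_{X_i}(\bullet)]_{i=1}^N$ induced by $X=\prod_i X_i$ to pass between the block update and a full-dimensional projection, and then apply Definition~\ref{def:errs} to trade the sampled block terms $N\mathbf{U}_{\tilde i_k}(\cdot)$ for the full maps plus the $w$- and $e$-errors, with the factor $N$ cancelling against the $N^{-1}\gamma_k$. You are in fact slightly more careful than the paper in noting that the unchanged blocks require $\mathcal{P}_{X_i}(x_k^{(i)})=x_k^{(i)}$, which holds by the inductive fact $x_k\in X$.
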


\begin{proof}\em 
Note that \fyy{in view of $X=\prod_{i=1}^N X_i$, using the definition of the Euclidean projection operator, we have that $ \mathcal{P}_X(\bullet)=\left( \mathcal{P}_{X_1}(\bullet),\ldots, \mathcal{P}_{X_N}(\bullet)\right)$}, then update rule \eqref{eqn:y_k_update_rule} can be written as $$y_{k+1}=\mathcal{P}_X\left(x_k-\gamma_k(\fyy{\bf U}_{i}\tilde \nabla_i f(x_k,\tilde \xi_k) + \rho_k F_{i}(x_{k},\tilde \xi_k))\right), \afj{\quad i=\tilde i_k.}$$ \fyy{Then} the result follows using Definition \ref{def:errs}. Similarly, one can obtain the compact form of the update rule \eqref{eqn:x_k_update_rule}. 
\end{proof}

In our analysis, we use the following properties of projection map. 
\begin{lemma}[Properties of projection mapping \cite{bertsekas2003convex}]\label{lemma:proj}\em
Let $X \subseteq \mathbb{R}^n$ be a nonempty closed convex set. 

\noindent (a) $\|\mathcal{P}_X(u) - \mathcal{P}_X(v)\| \leq \|u-v\|$ for all $u,v \in \mathbb{R}^n$.

\noindent (b) $\left(\mathcal{P}_X(u)-u\right)^T\left(x-\mathcal{P}_X(u)\right)\geq 0$ for all $u \in \mathbb{R}^n$ and $x  \in X$.
\end{lemma}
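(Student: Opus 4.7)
The plan is to prove the two standard properties of Euclidean projection by going in the natural order (b) then (a), since (b) is the variational characterization from which (a) follows.

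For part (b), I would start from the definition $\mathcal{P}_X(u) = \arg\min_{y\in X}\tfrac{1}{2}\|y-u\|^2$. Since $X$ is nonempty, closed, and convex, this minimizer exists and is unique. The objective $\phi(y) \triangleq \tfrac{1}{2}\|y-u\|^2$ is differentiable and convex, with gradient $\nabla\phi(y) = y - u$. The first-order optimality condition for convex minimization over a convex set states that $\nabla\phi(\mathcal{P}_X(u))^T(x - \mathcal{P}_X(u)) \geq 0$ for all $x \in X$, which upon substitution gives $(\mathcal{P}_X(u) - u)^T(x - \mathcal{P}_X(u)) \geq 0$ as required.

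For part (a), I would apply (b) twice. First, with the point $u$ and the feasible choice $x = \mathcal{P}_X(v) \in X$, I get $(\mathcal{P}_X(u) - u)^T(\mathcal{P}_X(v) - \mathcal{P}_X(u)) \geq 0$. Second, with the point $v$ and the feasible choice $x = \mathcal{P}_X(u) \in X$, I get $(\mathcal{P}_X(v) - v)^T(\mathcal{P}_X(u) - \mathcal{P}_X(v)) \geq 0$. Adding these two inequalities and rearranging yields
\[
(\mathcal{P}_X(u) - \mathcal{P}_X(v))^T(\mathcal{P}_X(u) - \mathcal{P}_X(v)) \leq (u - v)^T(\mathcal{P}_X(u) - \mathcal{P}_X(v)),
\]
i.e., $\|\mathcal{P}_X(u) - \mathcal{P}_X(v)\|^2 \leq (u-v)^T(\mathcal{P}_X(u) - \mathcal{P}_X(v))$. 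Applying the Cauchy--Schwarz inequality to the right-hand side and dividing through by $\|\mathcal{P}_X(u) - \mathcal{P}_X(v)\|$ (the trivial case where this quantity is zero being immediate) delivers the nonexpansiveness bound.

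There is no substantive obstacle here: both parts are standard textbook facts. The only minor care needed is handling the degenerate case $\mathcal{P}_X(u) = \mathcal{P}_X(v)$ separately in part (a) so that the division step is legitimate, and ensuring that the first-order optimality condition is invoked in its correct form for constrained convex minimization. Since the result is cited from Bertsekas, Nedi\'c, and Ozdaglar (2003), one could alternatively just defer entirely to that reference; the self-contained proof sketched above is short enough to include inline if desired.
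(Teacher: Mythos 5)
The paper offers no proof of this lemma at all; it simply cites Bertsekas, Nedi\'c, and Ozdaglar (2003), so there is no in-paper argument to compare against. Your self-contained proof is the standard one and is correct: part (b) follows from the first-order optimality condition for the convex program defining the projection, and part (a) follows by applying (b) twice, adding, and invoking Cauchy--Schwarz, with the degenerate case $\mathcal{P}_X(u)=\mathcal{P}_X(v)$ correctly handled before dividing.
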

We will adopt the following \fyy{error function} to measure the quality of solution generated by Algorithm \ref{algorithm:aR-IP-SeG} \fyy{in terms of infeasibility}. 
\begin{definition}[The dual gap function \cite{marcotte1998weak}]\label{def:dual_gap}
Let $X\subseteq \mathbb{R}^n$ be a \fyy{nonempty, closed, and convex} set and $F:X\rightarrow\mathbb{R}^n$ be a \fyy{vector-valued} mapping. Then, for any $x \in X$, the dual gap function $\mathrm{Gap}^*:X\rightarrow \mathbb{R}\cup \{+\infty\} $ is defined as $ \mathrm{Gap}^*(x) \triangleq \sup_{y\in X} F(y)^T(x-y)$.
\end{definition}
\begin{remark} \em
Note that when $X \neq \emptyset$, the dual gap function is nonnegative over $X$. It is also known that when $F$ is continuous and monotone and $X$ is closed and convex, $\mathrm{Gap}^*(x^*)=0$ if and only if $x^* \in \mbox{SOL}(X,F)$ (cf.~\cite{juditsky2011solving}).
\end{remark}
\begin{lemma}[Bounds on the harmonic series~\cite{kaushik2021method}]\label{lemma:harmonic_bnds}\em
Let $0\leq \alpha <1$ be a given scalar. Then, for any integer $K \geq 2^{\frac{1}{1-\alpha}}$, we have 
\begin{align*}
\frac{K^{1-\alpha}}{2(1-\alpha)}  \leq \sum_{k=0}^{K-1}\frac{1}{(k+1)^\alpha}\leq \frac{K^{1-\alpha}}{1-\alpha}.
\end{align*}
\end{lemma}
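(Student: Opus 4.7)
The plan is to reduce both bounds to the standard integral comparison for the function $t \mapsto t^{-\alpha}$. Re-indexing $j = k+1$ turns the sum into $S_K \triangleq \sum_{j=1}^K j^{-\alpha}$, and since $\alpha \in [0,1)$ the integrand $t^{-\alpha}$ is nonincreasing on $[1,\infty)$, which supplies two block-wise inequalities: for every integer $j \geq 1$, $j^{-\alpha} \geq \int_j^{j+1} t^{-\alpha}\,dt$; and for every integer $j \geq 2$, $j^{-\alpha} \leq \int_{j-1}^j t^{-\alpha}\,dt$.

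For the upper bound, I would sum the second inequality over $j = 2,\ldots,K$ and add the $j=1$ term explicitly, obtaining
\[
S_K \;\leq\; 1 + \int_1^K t^{-\alpha}\,dt \;=\; 1 + \frac{K^{1-\alpha} - 1}{1-\alpha} \;=\; \frac{K^{1-\alpha} - \alpha}{1-\alpha} \;\leq\; \frac{K^{1-\alpha}}{1-\alpha},
\]
where the last inequality uses $\alpha \geq 0$. This part does not require any lower bound on $K$ and handles the degenerate case $\alpha=0$ by direct substitution.

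For the lower bound, summing the first inequality from $j=1$ to $K$ gives
\[
S_K \;\geq\; \int_1^{K+1} t^{-\alpha}\,dt \;=\; \frac{(K+1)^{1-\alpha} - 1}{1-\alpha} \;\geq\; \frac{K^{1-\alpha} - 1}{1-\alpha}.
\]
To match the target $\frac{K^{1-\alpha}}{2(1-\alpha)}$, it suffices to show $K^{1-\alpha} - 1 \geq \tfrac{1}{2}K^{1-\alpha}$, equivalently $K^{1-\alpha} \geq 2$, which is exactly the hypothesis $K \geq 2^{1/(1-\alpha)}$.

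There is no substantive obstacle: the only delicate point is that the $-1$ arising from evaluating the integral at its lower endpoint must be absorbed by the $K^{1-\alpha}$ term in the lower bound, and this is what forces the precise threshold $K \geq 2^{1/(1-\alpha)}$ appearing in the statement. Monotonicity of $t^{-\alpha}$ on $[1,\infty)$ is what makes the block-wise comparisons valid, which is why the restriction $\alpha \in [0,1)$ (together with $1-\alpha>0$, which keeps the antiderivative $\tfrac{t^{1-\alpha}}{1-\alpha}$ well-defined) is essential throughout.
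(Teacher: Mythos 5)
Your proof is correct and complete: the integral comparison for the nonincreasing function $t^{-\alpha}$ gives both bounds, and you correctly identify that the hypothesis $K \geq 2^{1/(1-\alpha)}$ is exactly what absorbs the $-1$ from the lower integral into half of $K^{1-\alpha}$. The paper itself does not reproduce a proof (it cites \cite{kaushik2021method}), but the cited argument is this same standard integral-test computation, so your route matches the intended one.
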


\section{Performance analysis}\label{sec:rate}
In this section, we develop a rate and complexity analysis for \fyy{Algorithm~\ref{algorithm:aR-IP-SeG}}. We begin with showing that $\bar y_k$ generated by Algorithm \ref{algorithm:aR-IP-SeG}  is a well-defined weighted average.
\begin{lemma}[Weighted averaging]\label{lemma:ave}\em
Let $\{\bar y_k\}$ be generated by Algorithm \ref{algorithm:aR-IP-SeG}. Let us define the weights $\lambda_{k,K} \triangleq \frac{(\gamma_k\rho_k)^r}{\sum_{j=0}^{K-1} (\gamma_j\rho_j)^r}$ for $k \in \{0,\ldots, K-1\}$ and $K\geq 1$. Then, for any $K\geq 1$, we have $\bar{y}_{K} = \sum_{k=0}^{K-1} \lambda_{k,K} y_{k+1}$. Also, when $X$ is a convex set, we have $\bar y_K \in X$.
\end{lemma}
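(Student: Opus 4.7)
The plan is to establish the two claims of Lemma~\ref{lemma:ave} by straightforward induction on $K$, together with an auxiliary observation that each iterate $y_{k+1}$ lies in $X$.

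First I would prove the identity $\bar y_K = \sum_{k=0}^{K-1} \lambda_{k,K}\, y_{k+1}$ by induction on $K$. Observe that by iterating the recursion $\Gamma_{k+1} = \Gamma_k + (\gamma_k\rho_k)^r$ from $\Gamma_0 = 0$, one has $\Gamma_K = \sum_{j=0}^{K-1}(\gamma_j\rho_j)^r$, so $\lambda_{k,K} = (\gamma_k\rho_k)^r/\Gamma_K$. For the base case $K=1$, the recursion gives $\bar y_1 = \big(\Gamma_0 \bar y_0 + (\gamma_0\rho_0)^r y_1\big)/\Gamma_1 = y_1$, which matches $\lambda_{0,1} y_1 = y_1$. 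For the inductive step, assume $\bar y_K = \sum_{k=0}^{K-1} \frac{(\gamma_k\rho_k)^r}{\Gamma_K} y_{k+1}$. Substituting into \eqref{eqn:averaging_eq2} yields
\begin{align*}
\bar y_{K+1} \;=\; \frac{\Gamma_K \bar y_K + (\gamma_K\rho_K)^r y_{K+1}}{\Gamma_{K+1}} \;=\; \frac{\sum_{k=0}^{K-1}(\gamma_k\rho_k)^r y_{k+1} + (\gamma_K\rho_K)^r y_{K+1}}{\Gamma_{K+1}} \;=\; \sum_{k=0}^{K} \lambda_{k,K+1}\, y_{k+1},
\end{align*}
completing the induction.

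Next I would verify the feasibility claim $\bar y_K \in X$ when $X$ is convex. By the product structure $X = \prod_{i=1}^N X_i$, checking $\bar y_K \in X$ reduces to checking each block is a convex combination inside the corresponding $X_i$, so it suffices to show $y_{k+1} \in X$ for every $k$. This follows by a second short induction on $k$: $x_0, y_0 \in X$ by initialization; if $x_k \in X$ then by \eqref{eqn:y_k_update_rule} the block $y_{k+1}^{(\tilde i_k)}$ is a projection onto $X_{\tilde i_k}$ and the remaining blocks equal $x_k^{(i)} \in X_i$, so $y_{k+1} \in X$. Applying the same argument to \eqref{eqn:x_k_update_rule} preserves $x_{k+1} \in X$, closing the induction. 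Since $\lambda_{k,K} \geq 0$ with $\sum_{k=0}^{K-1}\lambda_{k,K}=1$ and each $y_{k+1}\in X$, convexity of $X$ gives $\bar y_K \in X$.

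I do not anticipate any genuine obstacle here; the only mild subtlety is making the indexing convention on $\Gamma_K$ versus $\sum_{j=0}^{K-1}(\gamma_j\rho_j)^r$ explicit so that the normalization in the definition of $\lambda_{k,K}$ aligns with the recursion, and keeping the product-set argument clean when asserting $y_{k+1}\in X$.
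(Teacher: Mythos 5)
Your proof is correct and follows essentially the same route as the paper: induction on $K$ using the recursions \eqref{eqn:averaging_eq1}--\eqref{eqn:averaging_eq2} with $\Gamma_K=\sum_{j=0}^{K-1}(\gamma_j\rho_j)^r$, then the convex-combination argument for $\bar y_K\in X$. Your extra induction verifying that each iterate $y_{k+1}$ remains in $X$ (via the blockwise projections and unchanged blocks) is a small rigor upgrade over the paper, which takes that fact for granted.
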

\begin{proof}\em 
We employ induction to show $\bar{y}_{K} = \sum_{k=0}^{K-1} \lambda_{k,K} y_{k+1}$ for any $K\geq 1$. For $K=1$ we have 
\begin{align*}
\sum_{k=0}^0 \fyy{\lambda_{k,1}} y_{k+1} = \lambda_{0,1}y_{1} = y_{1}, 
\end{align*}
where we used $\lambda_{0,1}=1$. Also, from the equations \eqref{eqn:averaging_eq1}--\eqref{eqn:averaging_eq2} and the initialization $\Gamma_0 =0$, we have
\begin{align*}
&\bar y_{1}:=\frac{\Gamma_0 \bar y_0+(\gamma_0\rho_0)^r y_{1}}{\Gamma_{1}}=\frac{0+(\gamma_0\rho_0)^r y_{1}}{\Gamma_{0}+\gamma_{0}^r} = y_{1}.
\end{align*}
The preceding two relations imply that the hypothesis statement holds for $K=1$. Next, suppose the relation holds for some $K\geq 1$. From the hypothesis, equations \eqref{eqn:averaging_eq1}--\eqref{eqn:averaging_eq2}, and that $\Gamma_{K}=\sum_{k=0}^{K-1}\gamma_k^r$ for all $K\geq 1$, we have
\begin{align*}
	\bar{y}_{K+1} &= \frac{\Gamma_K\bar{y}_K + (\gamma_K\rho_K)^r y_{K+1}}{\Gamma_{K+1}} 
	 = \frac{\left(\sum_{k=0}^{K-1}(\gamma_k\rho_k)^r\right)\sum_{k=0}^{K-1} \lambda_{k,K} y_{k+1}+ (\gamma_K\rho_K)^r y_{K+1}}{\Gamma_{K+1}}\\
	 &= \frac{\sum_{k=0}^{K}(\gamma_k\rho_k)^r y_{k+1}}{\sum_{j = 0}^{K}(\gamma_j\rho_j)^r} = \sum_{k=0}^{K}\left(\tfrac{(\gamma_k\rho_k)^r }{\sum_{j = 0}^{K}(\gamma_j\rho_j)^r}\right)y_{k+1}= \sum_{k=0}^{K} \lambda_{k,K+1} y_{k+1},
	\end{align*}
implying that the induction hypothesis holds for $K+1$. Thus, we conclude that the averaging formula holds for {all} $K\geq 1$. Note that since $\sum_{k=0}^{K-1} \lambda_{k,K}=1$, under the convexity of the set $X$, we have $\bar y_K \in X$. This completes the proof.
\end{proof}
Next, we prove a one-step lemma to obtain an upper bound for $F(y)^T(y_{k+1}-y) + \rho_k^{-1}(f(y_{k+1})-f(y))$ in terms of consecutive iterates and error terms. \fyy{this result will later help us obtain} upper bounds for \fyy{both the} suboptimality of the objective function and the \fyy{dual} gap function in Proposition \ref{prop:bounds}. 
\begin{lemma}[An error bound]\label{lemma:main_ineq}\em Consider Algorithm~\ref{algorithm:aR-IP-SeG} for solving problem~\eqref{prob:sopt_svi}. Let Assumptions~\ref{assum:problem} and~\ref{assum:rnd_vars} hold. Let the auxiliary stochastic sequence $\{u_k\}$ be defined recursively as
\begin{align}\label{eqn:aux_seq}
u_{k+1}\triangleq \mathcal{P}_X\left(u_k+N^{-1}\gamma_k({w}_{f,k}+e_{f,k}+\rho_k{w}_{F,k}+\rho_k{e}_{F,k})\right),
\end{align}
 where $u_0:=x_0$. Then for any arbitrary $y \in X$ and $k \geq 0$ we have
\begin{align}\label{eqn:main_07}
& (\gamma_k\rho_k)^rF(y)^T(y_{k+1}-y) + (\gamma_k\rho_k)^{r}\rho_k^{-1}(f(y_{k+1})-f(y)) \notag\\ & \leq 0.5N(\gamma_k\rho_k)^{r-1}\left(\|x_k-y\|^2  -\|x_{k+1}-y\|^2+\|u_k-y\|^2-\|u_{k+1}-y\|^2\right) \notag\\
&+2N^{-1}(\gamma_k\rho_k)^{r+1}\rho_k^{-2}\left(6C_f^2+3\|\tilde{w}_{f,k}\|^2+3\|\tilde{e}_{f,k}\|^2+4\|{w}_{f,k}\|^2+4\|e_{f,k}\|^2\right)\notag \\
&+2N^{-1}(\gamma_k\rho_k)^{r+1}\left(6C_F^2+3\|\tilde{w}_{F,k}\|^2+3\|\tilde{e}_{F,k}\|^2+4\|{w}_{F,k}\|^2+4\|{e}_{F,k}\|^2\right)\notag \\
&+\gamma_k^r\rho_k^{r-1}\left({w}_{f,k}+e_{f,k}+\rho_k{w}_{F,k}+\rho_k{e}_{F,k}\right)^T(u_k-y_{k+1}).
\end{align}
\end{lemma}
\begin{proof}\em 
Let $y \in X$ and $k\geq 0$ be arbitrary fixed values. From Lemma~\ref{lemma:compact_alg} we have
\begin{align}\label{eqn:main_01}
\|x_{k+1}-y\|^2 &= \|x_{k+1}-x_k\|^2+\|x_k-y\|^2 +2(x_{k+1}-x_k)^T(x_k-y)\notag\\
& = \|x_{k+1}-x_k\|^2+\|x_k-y\|^2 +2(x_{k+1}-x_k)^T(x_k-x_{k+1})+2(x_{k+1}-x_k)^T(x_{k+1}-y)\notag\\
& = \|x_k-y\|^2-\|x_{k+1}-x_k\|^2 +2(x_{k+1}-x_k)^T(x_{k+1}-y),
\end{align}
where the first equation is obtained by adding and subtracting $x_k$ while the third equation is implied by adding and subtracting $x_{k+1}$. In view of Lemma~\ref{lemma:proj} (b), by setting
$$u:= x_k-N^{-1}\gamma_k\left(\tilde \nabla f(y_{k+1})+{w}_{f,k}+e_{f,k}+\rho_kF(y_{k+1})+\rho_k{w}_{F,k}+\rho_k{e}_{F,k}\right),$$ 
and $x:=y$, and that we have $x_{k+1} =\mathcal{P}_X(u)$, we can write
\begin{align*}
&0 \leq \left(x_{k+1} - \left(x_k-N^{-1}\gamma_k\left(\tilde \nabla f(y_{k+1})+{w}_{f,k}+e_{f,k}+\rho_kF(y_{k+1})+\rho_k{w}_{F,k}+\rho_k{e}_{F,k}\right)\right)\right)^T(y-x_{k+1})\\
\Rightarrow \ & \afj{(x_{k+1}-x_k)^T(x_{k+1}-y)} \leq N^{-1}\gamma_k\left(\tilde \nabla f(y_{k+1})+{w}_{f,k}+e_{f,k}+\rho_kF(y_{k+1})+\rho_k{w}_{F,k}+\rho_k{e}_{F,k}\right)^T(y-x_{k+1}).
\end{align*}
Combining the preceding inequality with \eqref{eqn:main_01} we obtain
\begin{align*}
\|x_{k+1}-y\|^2 & \leq \|x_k-y\|^2-\|x_{k+1}-x_k\|^2 \notag\\
&+2N^{-1}\gamma_k\left(\tilde \nabla f(y_{k+1})+{w}_{f,k}+e_{f,k}+\rho_kF(y_{k+1})+\rho_k{w}_{F,k}+\rho_k{e}_{F,k}\right)^T(y-x_{k+1}).
\end{align*}
Note that we have
\begin{align*} 
\|x_{k+1}-x_k\|^2 &= \|x_{k+1}-y_{k+1}\|^2 +\|y_{k+1}-x_k\|^2 + 2(x_{k+1}-y_{k+1})^T(y_{k+1}-x_k).
\end{align*}
From the two preceding relations we obtain
\begin{align}\label{eqn:main_02}
\|x_{k+1}-y\|^2 & \leq \|x_k-y\|^2-\|x_{k+1}-y_{k+1}\|^2 -\|y_{k+1}-x_k\|^2 - 2(x_{k+1}-y_{k+1})^T(y_{k+1}-x_k) \notag\\
&+2N^{-1}\gamma_k\left(\tilde \nabla f(y_{k+1})+{w}_{f,k}+e_{f,k}+\rho_kF(y_{k+1})+\rho_k{w}_{F,k}+\rho_k{e}_{F,k}\right)^T(y-x_{k+1}).
\end{align}
Next we find an upper bound on the term $- 2(x_{k+1}-y_{k+1})^T(y_{k+1}-x_k)$. In view of Lemma \ref{lemma:proj} (b), by setting
$$u:= x_k-N^{-1}\gamma_k\left(\tilde \nabla f(x_k)+\tilde{w}_{f,k}+\tilde{e}_{f,k}+\rho_kF(x_k)+\rho_k\tilde{w}_{F,k}+\rho_k\tilde{e}_{F,k}\right),$$ 
and $x:=x_{k+1}$, and \fyy{in view of} $y_{k+1} =\mathcal{P}_X(u)$, we have
\begin{align*}
&0 \leq \left(y_{k+1} - \left(x_k-N^{-1}\gamma_k\left(\tilde \nabla f(x_k)+\tilde{w}_{f,k}+\tilde{e}_{f,k}+\rho_kF(x_k)+\rho_k\tilde{w}_{F,k}+\rho_k\tilde{e}_{F,k}\right)\right)\right)^T(x_{k+1}-y_{k+1})\\
\Rightarrow \ & - (x_{k+1}-y_{k+1})^T(y_{k+1}-x_k) \leq N^{-1}\gamma_k\left(\tilde \nabla f(x_k)+\tilde{w}_{f,k}+\tilde{e}_{f,k}+\rho_kF(x_k)+\rho_k\tilde{w}_{F,k}+\rho_k\tilde{e}_{F,k}\right)^T(x_{k+1}-y_{k+1}).
\end{align*}
From the preceding inequality and \eqref{eqn:main_02} we obtain
\begin{align*}
\|x_{k+1}-y\|^2 & \leq \|x_k-y\|^2-\|x_{k+1}-y_{k+1}\|^2 -\|y_{k+1}-x_k\|^2  \notag\\
& +2N^{-1}\gamma_k\left(\tilde \nabla f(x_k)+\tilde{w}_{f,k}+\tilde{e}_{f,k}+\rho_kF(x_k)+\rho_k\tilde{w}_{F,k}+\rho_k\tilde{e}_{F,k}\right)^T(x_{k+1}-y_{k+1})\notag \\
&+2N^{-1}\gamma_k\left(\tilde \nabla f(y_{k+1})+{w}_{f,k}+e_{f,k}+\rho_kF(y_{k+1})+\rho_k{w}_{F,k}+\rho_k{e}_{F,k}\right)^T(y-x_{k+1}).
\end{align*}
We further obtain
\begin{align*} 
\|x_{k+1}-y\|^2 & \leq \|x_k-y\|^2-\|x_{k+1}-y_{k+1}\|^2 -\|y_{k+1}-x_k\|^2  \notag\\
& +2N^{-1}\gamma_k\left(\tilde \nabla f(x_k)+\tilde{w}_{f,k}+\tilde{e}_{f,k}+\rho_kF(x_k)+\rho_k\tilde{w}_{F,k}+\rho_k\tilde{e}_{F,k}\right.\notag\\
&\left.-\tilde \nabla f(y_{k+1})-{w}_{f,k}-e_{f,k}-\rho_kF(y_{k+1})-\rho_k{w}_{F,k}-\rho_k{e}_{F,k}\right)^T(x_{k+1}-y_{k+1})\notag \\
&+2N^{-1}\gamma_k\left(\tilde \nabla f(y_{k+1})+{w}_{f,k}+e_{f,k}+\rho_kF(y_{k+1})+\rho_k{w}_{F,k}+\rho_k{e}_{F,k}\right)^T(y-y_{k+1}).
\end{align*}
Recall that for any {$a,b \in \mathbb{R}^n$, we have $2a^Tb \leq \|a\|^2+\|b\|^2$}. We obtain
\begin{align}\label{eqn:main_03}
\|x_{k+1}-y\|^2 & \leq \|x_k-y\|^2 -\|y_{k+1}-x_k\|^2  \notag\\
& +N^{-2}\gamma_k^2\left\|\tilde \nabla f(x_k)+\tilde{w}_{f,k}+\tilde{e}_{f,k}+\rho_kF(x_k)+\rho_k\tilde{w}_{F,k}+\rho_k\tilde{e}_{F,k}\right.\notag\\
&\left.-\tilde \nabla f(y_{k+1})-{w}_{f,k}-e_{f,k}-\rho_kF(y_{k+1})-\rho_k{w}_{F,k}-\rho_k{e}_{F,k}\right\|^2\notag \\
&+2N^{-1}\gamma_k\left(\tilde \nabla f(y_{k+1})+{w}_{f,k}+e_{f,k}+\rho_kF(y_{k+1})+\rho_k{w}_{F,k}+\rho_k{e}_{F,k}\right)^T(y-y_{k+1}).
\end{align}
Note that we can write 
\begin{align*}
&\left\|\tilde \nabla f(x_k)+\tilde{w}_{f,k}+\tilde{e}_{f,k}+\rho_kF(x_k)+\rho_k\tilde{w}_{F,k}+\rho_k\tilde{e}_{F,k}-\tilde \nabla f(y_{k+1})-{w}_{f,k}-e_{f,k}-\rho_kF(y_{k+1})-\rho_k{w}_{F,k}-\rho_k{e}_{F,k}\right\|^2 \\
&\leq 12\|\tilde \nabla f(x_k)\|^2+12\|\tilde \nabla f(y_{k+1})\|^2 + 12\rho_k^2\|F(x_k)\|^2+12\rho_k^2\|F(y_{k+1})\|^2+12\Delta_f +12\rho_k^2\Delta_F,
\end{align*}
where $\Delta_f \triangleq \|\tilde{w}_{f,k}\|^2+\|\tilde{e}_{f,k}\|^2+\|{w}_{f,k}\|^2+\|e_{f,k}\|^2$ and $\Delta_F \triangleq \|\tilde{w}_{F,k}\|^2+\|\tilde{e}_{F,k}\|^2+\|{w}_{F,k}\|^2+\|{e}_{F,k}\|^2$. In view of Remark~\ref{rem:bounds} we have
\begin{align*}
&\left\|\tilde \nabla f(x_k)+\tilde{w}_{f,k}+\tilde{e}_{f,k}+\rho_kF(x_k)+\rho_k\tilde{w}_{F,k}+\rho_k\tilde{e}_{F,k}-\tilde \nabla f(y_{k+1})-{w}_{f,k}-e_{f,k}-\rho_kF(y_{k+1})-\rho_k{w}_{F,k}-\rho_k{e}_{F,k}\right\|^2 \\
&\leq 24C_f^2+24\rho_k^2C_F^2+12\Delta_f +12\rho_k^2\Delta_F.
\end{align*}
From the preceding inequality and \eqref{eqn:main_03}, dropping the non-positive term $-\|y_{k+1}-x_k\|^2$ we have
\begin{align}\label{eqn:main_04}
\|x_{k+1}-y\|^2 & \leq \|x_k-y\|^2  +N^{-2}\gamma_k^2\left(24C_f^2+24\rho_k^2C_F^2+12\Delta_f +12\rho_k^2\Delta_F\right)\notag \\
&+2N^{-1}\gamma_k\left(\tilde \nabla f(y_{k+1})+{w}_{f,k}+e_{f,k}+\rho_kF(y_{k+1})+\rho_k{w}_{F,k}+\rho_k{e}_{F,k}\right)^T(y-y_{k+1}).
\end{align}
Note that from the convexity of $f$ we have that $\tilde \nabla f(y_{k+1})^T(y-y_{k+1}) \leq f(y)-f(y_{k+1})$. Also, the monotonicity of $F$ implies that $F(y_{k+1})^T(y-y_{k+1}) \leq F(y)^T(y-y_{k+1})$. Multiplying both sides of~\eqref{eqn:main_04} by $0.5N$, for all $y \in X$ and $k \geq 0$ we have
\begin{align}\label{eqn:main_05}
 \gamma_k\rho_kF(y)^T(y_{k+1}-y) + \gamma_k(f(y_{k+1})-f(y)) &\leq 0.5N\left(\|x_k-y\|^2  -\|x_{k+1}-y\|^2\right) \notag\\
&+N^{-1}\gamma_k^2\left(12C_f^2+12\rho_k^2C_F^2+6\Delta_f +6\rho_k^2\Delta_F\right)\notag \\
&+\gamma_k\left({w}_{f,k}+e_{f,k}+\rho_k{w}_{F,k}+\rho_k{e}_{F,k}\right)^T(y-y_{k+1}),
\end{align}
Let us now consider the auxiliary sequence $\{u_k\}$ given by Lemma~\ref{lemma:main_ineq}. Invoking Lemma~\ref{lemma:proj} (a) we can write
\begin{align*}
\|u_{k+1}-y\|^2 &= \left\|\mathcal{P}_X\left(u_k+N^{-1}\gamma_k({w}_{f,k}+e_{f,k}+\rho_k{w}_{F,k}+\rho_k{e}_{F,k})\right)-\mathcal{P}_X(y)\right\|^2\\
&\leq \| u_k+N^{-1}\gamma_k({w}_{f,k}+e_{f,k}+\rho_k{w}_{F,k}+\rho_k{e}_{F,k})-y\|^2\\
& = \|u_k-y\|^2+N^{-2}\gamma_k^2\|{w}_{f,k}+e_{f,k}+\rho_k{w}_{F,k}+\rho_k{e}_{F,k}\|^2\\ 
&+2N^{-1}\gamma_k({w}_{f,k}+e_{f,k}+\rho_k{w}_{F,k}+\rho_k{e}_{F,k})^T(u_k-y)\\
&\leq  \|u_k-y\|^2+4N^{-2}\gamma_k^2\|{w}_{f,k}\|^2+4N^{-2}\gamma_k^2\|{e}_{f,k}\|^2+4N^{-2}\gamma_k^2\rho_k^2\|{w}_{F,k}\|^2+4N^{-2}\gamma_k^2\rho_k^2\|{e}_{F,k}\|^2\\
&+2N^{-1}\gamma_k({w}_{f,k}+e_{f,k}+\rho_k{w}_{F,k}+\rho_k{e}_{F,k})^T(u_k-y).
\end{align*}
Rearranging the terms in the preceding inequality and multiplying the both sides by $0.5N$ we obtain
\begin{align}\label{eqn:main_06}
0&\leq  0.5N\left(\|u_k-y\|^2-\|u_{k+1}-y\|^2\right) +2N^{-1}\gamma_k^2\|{w}_{f,k}\|^2+2N^{-1}\gamma_k^2\|{e}_{f,k}\|^2+2N^{-1}\gamma_k^2\rho_k^2\|{w}_{F,k}\|^2\notag\\
&+2N^{-1}\gamma_k^2\rho_k^2\|{e}_{F,k}\|^2+\gamma_k({w}_{f,k}+e_{f,k}+\rho_k{w}_{F,k}+\rho_k{e}_{F,k})^T(u_k-y).
\end{align}
Summing the inequities \eqref{eqn:main_05} and \eqref{eqn:main_06} we have
\begin{align*}
 \gamma_k\rho_kF(y)^T(y_{k+1}-y) + \gamma_k(f(y_{k+1})-f(y)) &\leq 0.5N\left(\|x_k-y\|^2  -\|x_{k+1}-y\|^2+\|u_k-y\|^2-\|u_{k+1}-y\|^2\right) \notag\\
&+2N^{-1}\gamma_k^2\left(6C_f^2+3\|\tilde{w}_{f,k}\|^2+3\|\tilde{e}_{f,k}\|^2+4\|{w}_{f,k}\|^2+4\|e_{f,k}\|^2\right)\notag \\
&+2N^{-1}\gamma_k^2\rho_k^2\left(6C_F^2+3\|\tilde{w}_{F,k}\|^2+3\|\tilde{e}_{F,k}\|^2+4\|{w}_{F,k}\|^2+4\|{e}_{F,k}\|^2\right)\notag \\
&+\gamma_k\left({w}_{f,k}+e_{f,k}+\rho_k{w}_{F,k}+\rho_k{e}_{F,k}\right)^T(u_k-y_{k+1}).
\end{align*}
Multiplying both sides of the preceding inequality by $(\gamma_k\rho_k)^{r-1}$, we obtain the inequality \eqref{eqn:main_07}.
\end{proof}
\fyy{In the following result, we show that one of the error terms that appear in the inequality \eqref{eqn:main_07} has a zero mean. This result will help us with obtaining the convergence rates for Algorithm~\ref{algorithm:aR-IP-SeG}.} 
\begin{lemma}\label{lemma:aux_exp_zero}\em
Consider the auxiliary sequence defined by \eqref{eqn:aux_seq}. Let Assumptions~\ref{assum:problem} and~\ref{assum:rnd_vars} hold. Then for any $k\geq 0$ we have
\begin{align*}
\mathbb{E}\left[\left({w}_{f,k}+e_{f,k}+\rho_k{w}_{F,k}+\rho_k{e}_{F,k}\right)^T(u_k-y_{k+1})\right]=0.
\end{align*}
\end{lemma}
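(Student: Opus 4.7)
The plan is to unwind the expectation via the tower property, using the filtration $\mathcal{F}_{k-1}$ of Definition \ref{def:hist} together with the enlarged $\sigma$-algebras appearing in Lemma \ref{lemma:prop_rnd_blcks}. The essential observation is that the inner product factors into a term depending only on variables realized \emph{before} the step-$k$ randomness $(\xi_k,i_k)$, times a sum of errors whose conditional mean at the right level is zero.

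First I would verify measurability. By a straightforward induction on the recursion defining $u_k$ (which updates $u_{k-1}$ using only $w_{\bullet,k-1}$, $e_{\bullet,k-1}$, the step size, and the penalty parameter), the auxiliary iterate $u_k$ is measurable with respect to $\mathcal{F}_{k-1}$. Likewise, from the update rule \eqref{eqn:y_k_update_rule} of Algorithm \ref{algorithm:aR-IP-SeG}, $y_{k+1}$ is built from $x_k$ (which is $\mathcal{F}_{k-1}$-measurable) together with the step-$k$ samples $\tilde\xi_k,\tilde i_k$. Hence, setting $\mathcal{G}_k \triangleq \mathcal{F}_{k-1}\cup\{\tilde\xi_k,\tilde i_k\}$, both $u_k$ and $y_{k+1}$ are $\mathcal{G}_k$-measurable, and so is the vector $u_k-y_{k+1}$.

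Next, by the tower property and $\mathcal{G}_k$-measurability of $u_k-y_{k+1}$, I would write
\begin{align*}
\mathbb{E}\bigl[({w}_{f,k}+e_{f,k}+\rho_k{w}_{F,k}+\rho_k{e}_{F,k})^T(u_k-y_{k+1})\bigr]
= \mathbb{E}\bigl[(u_k-y_{k+1})^T \mathbb{E}[{w}_{f,k}+e_{f,k}+\rho_k{w}_{F,k}+\rho_k{e}_{F,k}\mid\mathcal{G}_k]\bigr].
\end{align*}
Lemma \ref{lemma:prop_rnd_blcks}(a) immediately gives $\mathbb{E}[w_{f,k}\mid\mathcal{G}_k]=\mathbb{E}[w_{F,k}\mid\mathcal{G}_k]=0$. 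For the block-sampling errors $e_{f,k}$ and $e_{F,k}$, which are conditioned one level deeper in Lemma \ref{lemma:prop_rnd_blcks}(c), I would apply the tower property once more,
$\mathbb{E}[e_{f,k}\mid\mathcal{G}_k]=\mathbb{E}\bigl[\mathbb{E}[e_{f,k}\mid\mathcal{G}_k\cup\{\xi_k\}]\mid\mathcal{G}_k\bigr]=0$,
and analogously for $e_{F,k}$, using Lemma \ref{lemma:prop_rnd_blcks}(c) to vanish the inner conditional expectation. Combining these four vanishings yields the claim.

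The only subtle point, and the main bookkeeping obstacle, is to make sure at each step that the factor being pulled out of the conditional expectation is measurable with respect to precisely the $\sigma$-algebra used in the corresponding bullet of Lemma \ref{lemma:prop_rnd_blcks}. In particular, one must be careful that $y_{k+1}$ is \emph{not} $\mathcal{F}_{k-1}$-measurable (it depends on $\tilde\xi_k,\tilde i_k$), which forces the use of the enlarged $\mathcal{G}_k$ rather than $\mathcal{F}_{k-1}$; and that the $e_{\bullet,k}$ terms require conditioning on $\xi_k$ as well, handled by an extra application of the tower property.
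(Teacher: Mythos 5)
Your proposal is correct and follows essentially the same route as the paper's proof: condition on $\mathcal{F}_{k-1}\cup\{\tilde\xi_k,\tilde i_k\}$, pull out the measurable factor $u_k-y_{k+1}$, kill the $w_{\bullet,k}$ terms via Lemma~\ref{lemma:prop_rnd_blcks}(a), and kill the $e_{\bullet,k}$ terms via Lemma~\ref{lemma:prop_rnd_blcks}(c) plus one extra application of the tower property over $\xi_k$. Your explicit attention to the measurability of $u_k$ and $y_{k+1}$ is in fact slightly more careful than the paper's own write-up, which contains a typo stating $u_k$ is ``$\mathcal{F}_{-1}$-measurable.''
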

\begin{proof}\em 
Consider $\{u_k\}$ defined by \eqref{eqn:aux_seq}. From this definition and Algorithm~\ref{algorithm:aR-IP-SeG} we observe that $u_k$ is \fyy{$\mathcal{F}_{k-1}$-measurable}. Also, note that $y_{k+1}$ is $\mathcal{F}_{k-1}\cup\{\tilde{\xi}_k,\tilde{i}_k\}$-measurable. We can write
\begin{align}\label{eqn:lem_u_zero_1}
&\mathbb{E}\left[\left({w}_{f,k}+e_{f,k}+\rho_k{w}_{F,k}+\rho_k{e}_{F,k}\right)^T(u_k-y_{k+1})\mid \mathcal{F}_{k-1}\cup\{\tilde{\xi}_k,\tilde{i}_k\}\right]\notag\\
&=\mathbb{E}\left[\left({w}_{f,k}+e_{f,k}+\rho_k{w}_{F,k}+\rho_k{e}_{F,k}\right)\mid \mathcal{F}_{k-1}\cup\{\tilde{\xi}_k,\tilde{i}_k\}\right]^T(u_k-y_{k+1}).
\end{align}
Note that from Lemma \ref{lemma:prop_rnd_blcks} (a) we have
\begin{align}\label{eqn:lem_u_zero_2}
\mathbb{E}[{w}_{f,k}+\rho_k{w}_{F,k}\mid \mathcal{F}_{k-1}\cup\{\tilde{\xi}_k,\tilde{i}_k\}]=0.
\end{align}
 We also have from Lemma \ref{lemma:prop_rnd_blcks} (c) that
\begin{align*}
\mathbb{E}[e_{f,k}+\rho_k{e}_{F,k}\mid \mathcal{F}_{k-1}\cup\{\tilde{\xi}_k,\tilde{i}_k,\xi_k\}]=0.
\end{align*}
Taking conditional \fyy{expectations} with respect to $\xi_k$ \fyy{on} both sides of the preceding equation, we obtain
\begin{align*}
\mathbb{E}[e_{f,k}+\rho_k{e}_{F,k}\mid \mathcal{F}_{k-1}\cup\{\tilde{\xi}_k,\tilde{i}_k\}]=0.
\end{align*}
Combining the preceding relation with \eqref{eqn:lem_u_zero_1} and \eqref{eqn:lem_u_zero_2}, we have that
\begin{align*}
&\mathbb{E}\left[\left({w}_{f,k}+e_{f,k}+\rho_k{w}_{F,k}+\rho_k{e}_{F,k}\right)^T(u_k-y_{k+1})\mid \mathcal{F}_{k-1}\cup\{\tilde{\xi}_k,\tilde{i}_k\}\right]=0.
\end{align*}
Taking conditional \fyy{expectations} with respect to $\mathcal{F}_{k-1}\cup\{\tilde{\xi}_k,\tilde{i}_k\}$ \fyy{on}  both sides of the preceding relation, we obtain the result. 
\end{proof}
\fyy{In the following, we employ the results of Lemmas~\ref{lemma:main_ineq} and \ref{lemma:aux_exp_zero} to obtain upper bounds on the suboptimality of the objective function and the dual gap function associated with the stochastic VI constraint in problem~\eqref{prob:sopt_svi}. This will prepare us to analyze the convergence speed of Algorithm~\ref{algorithm:aR-IP-SeG} later in Theorem~\ref{thm:rates}.}
\begin{proposition}[Error bounds]\label{prop:bounds}\em
Consider Algorithm~\ref{algorithm:aR-IP-SeG} for solving problem~\eqref{prob:sopt_svi}. Let Assumptions~\ref{assum:problem} and~\ref{assum:rnd_vars} hold. Suppose $\{\gamma_k\rho_k\}$ is nonincreasing, $\{\rho_k\}$ is nondecreasing, and $0\leq r<1$ is a scalar. The following results hold for \fyy{all} $K\geq 2$
\begin{align}
&\mathbb{E}[f(\bar{y}_K)]-  f^*\leq \frac{4ND_X^2(\gamma_{K-1}\rho_{K-1})^{r-1}\rho_{K-1}+2N^{-1}\sum_{k=0}^{K-1}(\gamma_k\rho_k)^{1+r}\rho_k\left(\theta_F+\theta_f\rho_k^{-2}\right)}{\sum_{k=0}^{K-1}(\gamma_k\rho_k)^r},\label{prop:subopt_bound}\\
&\mathbb{E}[\mbox{Gap}^*(\bar{y}_K)]\leq \frac{4ND_X^2(\gamma_{K-1}\rho_{K-1})^{r-1}+2N^{-1}\sum_{k=0}^{K-1}(\gamma_k\rho_k)^{r}\left(\theta_F\gamma_k\rho_k+\theta_f\gamma_k\rho_k^{-1}+2ND_f\rho_k^{-1}\right)}{\sum_{k=0}^{K-1}(\gamma_k\rho_k)^r},\label{prop:infeas_bound}
\end{align}
where $\theta_F\triangleq (7N-1)C_F^2+\fyy{7N\nu_F^2}$ and $\theta_f\triangleq (7N-1)C_f^2+\fyy{7N\nu_f^2}$.
\end{proposition}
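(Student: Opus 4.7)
The plan is to sum the one-step inequality \eqref{eqn:main_07} from $k=0$ to $K-1$, take expectations, and then specialize the choice of $y \in X$ for each of the two bounds. Throughout, I will write $\mathrm{RHS}_k$ for the right-hand side of \eqref{eqn:main_07}.

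For the suboptimality bound \eqref{prop:subopt_bound}, I fix $y = x^*$, a minimizer in $\mbox{SOL}(X,F)$ with $f(x^*) = f^*$. Since $y_{k+1} \in X$ and $x^* \in \mbox{SOL}(X,F)$, we have $F(x^*)^T(y_{k+1} - x^*) \geq 0$, so the left-hand side of \eqref{eqn:main_07} simplifies to $\gamma_k^r\rho_k^{r-1}(f(y_{k+1}) - f^*)$. Multiplying through by $\rho_k > 0$ converts this into $(\gamma_k\rho_k)^r(f(y_{k+1}) - f^*)$, which matches the averaging weight in Lemma \ref{lemma:ave}. Summing over $k$, taking expectations (the inner-product cross term in $\mathrm{RHS}_k$ vanishes by Lemma \ref{lemma:aux_exp_zero} and the squared-norm stochastic terms are controlled by Corollary \ref{cor:exp_terms}, producing the aggregate constants $\theta_F$ and $\theta_f$), dividing by $\sum_j(\gamma_j\rho_j)^r$, and finally applying Jensen's inequality to $f$ together with the averaging identity in Lemma \ref{lemma:ave} then delivers \eqref{prop:subopt_bound}, provided I can bound the telescoping contribution $\sum_k 0.5N\rho_k(\gamma_k\rho_k)^{r-1}(\|x_k - x^*\|^2 - \|x_{k+1} - x^*\|^2)$ and the analogous $u$-telescope.

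For the gap bound \eqref{prop:infeas_bound}, I keep $y \in X$ arbitrary and do not multiply by $\rho_k$. Moving the $f$-term to the right and invoking $|f(y_{k+1}) - f(y)| \leq 2D_f$ from Remark \ref{rem:bounds}, I obtain $(\gamma_k\rho_k)^r F(y)^T(y_{k+1} - y) \leq 2D_f(\gamma_k\rho_k)^r\rho_k^{-1} + \mathrm{RHS}_k$. Summing over $k$ and exploiting linearity of $F(y)^T$ together with Lemma \ref{lemma:ave} yields $\bigl[\sum_j(\gamma_j\rho_j)^r\bigr] F(y)^T(\bar y_K - y) \leq \sum_k\bigl[2D_f(\gamma_k\rho_k)^r\rho_k^{-1} + \mathrm{RHS}_k\bigr]$. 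Taking $\sup_{y \in X}$ (the stochastic error terms in $\mathrm{RHS}_k$ are independent of $y$), taking expectation, and appealing to Definition \ref{def:dual_gap} then produce the stated bound on $\mathbb{E}[\mathrm{Gap}^*(\bar y_K)]$.

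The main obstacle in both derivations is the coefficient-dependent telescoping: the factors $0.5N(\gamma_k\rho_k)^{r-1}$ (for the gap) and $0.5N\rho_k(\gamma_k\rho_k)^{r-1}$ (for the suboptimality) vary with $k$, so the differences of squared distances do not telescope directly. I will resolve this by Abel summation. Because $\{\gamma_k\rho_k\}$ is nonincreasing and $r - 1 < 0$, the sequence $(\gamma_k\rho_k)^{r-1}$ is nondecreasing, and combined with the nondecreasing $\{\rho_k\}$, both relevant coefficient sequences $\{c_k\}$ are nondecreasing in $k$. Applying the identity $\sum_{k=0}^{K-1}c_k(a_k - a_{k+1}) = c_0 a_0 - c_{K-1}a_K + \sum_{k=1}^{K-1}(c_k - c_{k-1})a_k$, dropping the nonpositive term $-c_{K-1}a_K$, and using the uniform bound $\|x_k - y\|^2, \|u_k - y\|^2 \leq 4 D_X^2$ (valid since $x_k, u_k, y \in X$ under Assumption \ref{assum:problem} and Remark \ref{rem:bounds}) collapses each telescope to $4 D_X^2 c_{K-1}$. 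Combining the $x$- and $u$-telescopes then produces precisely $4N D_X^2(\gamma_{K-1}\rho_{K-1})^{r-1}$ for the gap and $4N D_X^2 \rho_{K-1}(\gamma_{K-1}\rho_{K-1})^{r-1}$ for the suboptimality, i.e., the first terms of the respective numerators. A secondary technical point is the vanishing of the noise-times-$(u_k - y_{k+1})$ cross term in expectation, which Lemma \ref{lemma:aux_exp_zero} handles via the nested filtration of Definition \ref{def:hist}.
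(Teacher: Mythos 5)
Your proposal is correct and follows essentially the same route as the paper: the same one-step inequality with $y=x^*$ (multiplied by $\rho_k$) for suboptimality and the $2D_f$ bound with arbitrary $y$ for the gap, the same treatment of the varying coefficients (your explicit Abel summation is exactly the paper's add-and-subtract of $0.5N c_{k-1}(\|x_k-x^*\|^2+\|u_k-x^*\|^2)$ with the $8D_X^2$ bound), and the same use of Lemma \ref{lemma:ave}, Corollary \ref{cor:exp_terms}, and Lemma \ref{lemma:aux_exp_zero} to conclude. No gaps; the constants $4ND_X^2$, $\theta_F$, and $\theta_f$ come out as you describe.
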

\begin{proof}\em 
First we show \fyy{the relation} \eqref{prop:subopt_bound}. Consider the inequality \eqref{eqn:main_07}. Let $y:=x^*$ where $x^* \in X$ is an optimal solution to the problem \eqref{prob:sopt_svi}. This implies that $x^* \in \mbox{SOL}(X, \mathbb{E}[F(\bullet,\fyy{\xi})])$ or equivalently, $F(x^*)^T(y_{k+1}-x^*) \geq 0$. We obtain
\begin{align}\label{eqn:prop_bounds_01}
(\gamma_k\rho_k)^{r}\rho_k^{-1}(f(y_{k+1})-f^*) &\leq 0.5N(\gamma_k\rho_k)^{r-1}\left(\|x_k-x^*\|^2  -\|x_{k+1}-x^*\|^2+\|u_k-x^*\|^2-\|u_{k+1}-x^*\|^2\right) \notag\\
&+2N^{-1}(\gamma_k\rho_k)^{r+1}\rho_k^{-2}\left(6C_f^2+3\|\tilde{w}_{f,k}\|^2+3\|\tilde{e}_{f,k}\|^2+4\|{w}_{f,k}\|^2+4\|e_{f,k}\|^2\right)\notag \\
&+2N^{-1}(\gamma_k\rho_k)^{r+1}\left(6C_F^2+3\|\tilde{w}_{F,k}\|^2+3\|\tilde{e}_{F,k}\|^2+4\|{w}_{F,k}\|^2+4\|{e}_{F,k}\|^2\right)\notag \\
&+\gamma_k^r\rho_k^{r-1}\left({w}_{f,k}+e_{f,k}+\rho_k{w}_{F,k}+\rho_k{e}_{F,k}\right)^T(u_k-y_{k+1}).
\end{align}
Multiplying the both sides by $\rho_k$ and then, adding and subtracting \fyy{the term} $$0.5N(\gamma_{k-1}\rho_{k-1})^{r-1}\rho_{k-1}\left(\|x_k-x^*\|^2+\|u_k-x^*\|^2 \right),$$ \fyy{we have  for all $k \geq 1$}
\begin{align}\label{eqn:prop_bounds_02}
(\gamma_k\rho_k)^{r}(f(y_{k+1})-f^*) &\leq 0.5N(\gamma_{k-1}\rho_{k-1})^{r-1}\rho_{k-1}\left(\|x_k-x^*\|^2  +\|u_k-x^*\|^2\right) \notag\\
&-0.5N(\gamma_{k}\rho_{k})^{r-1}\rho_{k}\left(\|x_{k+1}-x^*\|^2+\|u_{k+1}-x^*\|^2\right)\notag\\
&+ 0.5N\left((\gamma_{k}\rho_{k})^{r-1}\rho_k-(\gamma_{k-1}\rho_{k-1})^{r-1}\rho_{k-1}\right)\left(\|x_k-x^*\|^2  +\|u_k-x^*\|^2\right)\notag\\
&+2N^{-1}(\gamma_k\rho_k)^{r+1}\rho_k^{-1}\left(6C_f^2+3\|\tilde{w}_{f,k}\|^2+3\|\tilde{e}_{f,k}\|^2+4\|{w}_{f,k}\|^2+4\|e_{f,k}\|^2\right)\notag \\
&+2N^{-1}(\gamma_k\rho_k)^{r+1}\rho_k\left(6C_F^2+3\|\tilde{w}_{F,k}\|^2+3\|\tilde{e}_{F,k}\|^2+4\|{w}_{F,k}\|^2+4\|{e}_{F,k}\|^2\right)\notag \\
&+(\gamma_k\rho_k)^r\left({w}_{f,k}+e_{f,k}+\rho_k{w}_{F,k}+\rho_k{e}_{F,k}\right)^T(u_k-y_{k+1}).
\end{align}
Note that because $r<1$ and that $\{\gamma_k\rho_k\}$ is nonincreasing and $\{\rho_k\}$ is nondecreasing, we have $$\gamma_{k}^{r-1}\rho_k-\gamma_{k-1}^{r-1}\rho_{k-1} \geq 0.$$ Thus, in view of Remark~\ref{rem:bounds} we have
\begin{align*}
 &0.5N\left((\gamma_{k}\rho_{k})^{r-1}\rho_k-(\gamma_{k-1}\rho_{k-1})^{r-1}\rho_{k-1}\right)\left(\|x_k-x^*\|^2  +\|u_k-x^*\|^2\right) \\
& \leq 4ND_X^2\left((\gamma_{k}\rho_{k})^{r-1}\rho_k-(\gamma_{k-1}\rho_{k-1})^{r-1}\rho_{k-1}\right).
\end{align*}
Substituting the preceding bound in \eqref{eqn:prop_bounds_02} and then, summing the resulting inequality for $k=1,\ldots,K-1$ we obtain
\begin{align}\label{eqn:prop_bounds_02}
\sum_{k=1}^{K-1}(\gamma_k\rho_k)^{r}(f(y_{k+1})-f^*) &\leq 0.5N(\gamma_{0}\rho_{0})^{r-1}\rho_{0}\left(\|x_1-x^*\|^2  +\|u_1-x^*\|^2\right) \notag\\
&+4ND_X^2\left((\gamma_{K-1}\rho_{K-1})^{r-1}\rho_{K-1}-(\gamma_{0}\rho_0)^{r-1}\rho_{0}\right)\notag\\
&+2N^{-1}\sum_{k=1}^{K-1}(\gamma_k\rho_k)^{r+1}\rho_k^{-1}\left(6C_f^2+3\|\tilde{w}_{f,k}\|^2+3\|\tilde{e}_{f,k}\|^2+4\|{w}_{f,k}\|^2+4\|e_{f,k}\|^2\right)\notag \\
&+2N^{-1}\sum_{k=1}^{K-1}(\gamma_k\rho_k)^{r+1}\rho_k\left(6C_F^2+3\|\tilde{w}_{F,k}\|^2+3\|\tilde{e}_{F,k}\|^2+4\|{w}_{F,k}\|^2+4\|{e}_{F,k}\|^2\right)\notag \\
&+\sum_{k=1}^{K-1}(\gamma_k\rho_k)^{r}\left({w}_{f,k}+e_{f,k}+\rho_k{w}_{F,k}+\rho_k{e}_{F,k}\right)^T(u_k-y_{k+1}).
\end{align}
From \eqref{eqn:prop_bounds_01} for $k=0$ we have 
\begin{align}\label{eqn:prop_bounds_03}
(\gamma_0\rho_0)^{r}(f(y_{1})-f^*) &\leq 0.5N(\gamma_0\rho_0)^{r-1}\rho_0\left(\|x_0-x^*\|^2  -\|x_{1}-x^*\|^2+\|u_0-x^*\|^2-\|u_{1}-x^*\|^2\right) \notag\\
&+2N^{-1}(\gamma_0\rho_0)^{1+r}\rho_0^{-1}\left(6C_f^2+3\|\tilde{w}_{f,0}\|^2+3\|\tilde{e}_{f,0}\|^2+4\|{w}_{f,0}\|^2+4\|e_{f,0}\|^2\right)\notag \\
&+2N^{-1}(\gamma_0\rho_0)^{1+r}\rho_0\left(6C_F^2+3\|\tilde{w}_{F,0}\|^2+3\|\tilde{e}_{F,0}\|^2+4\|{w}_{F,0}\|^2+4\|{e}_{F,0}\|^2\right)\notag \\
&+(\gamma_0\rho_0)^r\left({w}_{f,0}+e_{f,0}+\rho_k{w}_{F,0}+\rho_k{e}_{F,0}\right)^T(u_0-y_{1}).
\end{align}
Summing the preceding two relations we obtain
\begin{align}\label{eqn:prop_bounds_04}
\sum_{k=0}^{K-1}(\gamma_k\rho_k)^{r}(f(y_{k+1})-f^*)  &\leq 0.5N(\gamma_0\rho_0)^{r-1}\rho_0\left(\|x_0-x^*\|^2  +\|u_0-x^*\|^2\right) \notag\\
&+4ND_X^2\left((\gamma_{K-1}\rho_{K-1})^{r-1}\rho_{K-1}-(\gamma_{0}\rho_0)^{r-1}\rho_{0}\right)\notag\\
&+2N^{-1}\sum_{k=0}^{K-1}(\gamma_k\rho_k)^{r+1}\rho_k^{-1}\left(6C_f^2+3\|\tilde{w}_{f,k}\|^2+3\|\tilde{e}_{f,k}\|^2+4\|{w}_{f,k}\|^2+4\|e_{f,k}\|^2\right)\notag \\
&+2N^{-1}\sum_{k=0}^{K-1}(\gamma_k\rho_k)^{r+1}\rho_k\left(6C_F^2+3\|\tilde{w}_{F,k}\|^2+3\|\tilde{e}_{F,k}\|^2+4\|{w}_{F,k}\|^2+4\|{e}_{F,k}\|^2\right)\notag \\
&+\sum_{k=0}^{K-1}(\gamma_k\rho_k)^{r}\left({w}_{f,k}+e_{f,k}+\rho_k{w}_{F,k}+\rho_k{e}_{F,k}\right)^T(u_k-y_{k+1}).
\end{align}
Note that from the convexity of $f$ and Lemma \ref{lemma:ave}, we have
\begin{align*}
\frac{\sum_{k=0}^{K-1}(\gamma_k\rho_k)^rf(y_{k+1})}{\sum_{k=0}^{K-1}(\gamma_k\rho_k)^r} = \sum_{k=0}^{K-1}\left(\frac{(\gamma_k\rho_k)^r}{\sum_{j=0}^{K-1}(\gamma_j\rho_j)^r}\right)f(y_{k+1}) =  \sum_{k=0}^{K-1}\lambda_{k,K}f(y_{k+1}) \geq f\left(\sum_{k=0}^{K-1} \lambda_{k,K} y_{k+1}\right) = f(\bar{y}_K).
\end{align*}
Dividing the both \fyy{sides} of \eqref{eqn:prop_bounds_04} by $\sum_{k=0}^{K-1}(\gamma_k\rho_k)^r$, using the preceding relation, and $\|x_0-x^*\|^2  +\|u_0-x^*\|^2 \leq 8D_X^2$, we obtain
\begin{align}\label{eqn:prop_bounds_05}
f(\bar{y}_K) -f^*&\leq \left(\sum_{k=0}^{K-1}(\gamma_k\rho_k)^r\right)^{-1}\left(4ND_X^2(\gamma_0\rho_0)^{r-1}\rho_0 +4ND_X^2\left((\gamma_{K-1}\rho_{K-1})^{r-1}\rho_{K-1}-(\gamma_{0}\rho_0)^{r-1}\rho_{0}\right)\right.\notag\\
&\left.+2N^{-1}\sum_{k=0}^{K-1}(\gamma_k\rho_k)^{r+1}\rho_k^{-1}\left(6C_f^2+3\|\tilde{w}_{f,k}\|^2+3\|\tilde{e}_{f,k}\|^2+4\|{w}_{f,k}\|^2+4\|e_{f,k}\|^2\right)\right.\notag \\
&\left.+2N^{-1}\sum_{k=0}^{K-1}(\gamma_k\rho_k)^{r+1}\rho_k\left(6C_F^2+3\|\tilde{w}_{F,k}\|^2+3\|\tilde{e}_{F,k}\|^2+4\|{w}_{F,k}\|^2+4\|{e}_{F,k}\|^2\right)\right.\notag \\
&\left.+\sum_{k=0}^{K-1}(\gamma_k\rho_k)^{r}\left({w}_{f,k}+e_{f,k}+\rho_k{w}_{F,k}+\rho_k{e}_{F,k}\right)^T(u_k-y_{k+1})\right).
\end{align}
Taking expectations \fyy{on} the both sides and applying Corollary \ref{cor:exp_terms} and Lemma \ref{lemma:aux_exp_zero}, we obtain
\begin{align*}
\mathbb{E}[f(\bar{y}_K)]-  f^*&\leq  \left(\sum_{k=0}^{K-1}(\gamma_k\rho_k)^r\right)^{-1}\left(4ND_X^2(\gamma_{K-1}\rho_{K-1})^{r-1}\rho_{K-1}\right. \\ & \left. +2N^{-1}\sum_{k=0}^{K-1}(\gamma_k\rho_k)^{r+1}\rho_k^{-1}\left(6C_f^2+7\nu_f^2+7(N-1)\fyy{(\nu_f^2+C_f^2)}\right)\right.\notag \\
&\left.+2N^{-1}\sum_{k=0}^{K-1}(\gamma_k\rho_k)^{r+1}\rho_k\left(6C_F^2+7\nu_F^2+7(N-1)\fyy{(\nu_F^2+C_F^2)}\right)\right).
\end{align*}
This implies that the inequality \eqref{prop:subopt_bound} holds for all $K\geq 2$. Next we show the inequality \eqref{prop:infeas_bound}. Consider the inequality \eqref{eqn:main_07} again for an arbitrary $y \in X$. In view of Remark~\ref{rem:bounds} we have $f(y_{k+1})-f(y) \leq 2D_f$. Rearranging the terms in \eqref{eqn:main_07} we obtain
\begin{align} \label{eqn:prop_bound2_01}
 (\gamma_k\rho_k)^rF(y)^T(y_{k+1}-y)  & \leq 0.5N(\gamma_k\rho_k)^{r-1}\left(\|x_k-y\|^2  -\|x_{k+1}-y\|^2+\|u_k-y\|^2-\|u_{k+1}-y\|^2\right) \notag\\
&+2N^{-1}(\gamma_k\rho_k)^{r+1}\rho_k^{-2}\left(6C_f^2+3\|\tilde{w}_{f,k}\|^2+3\|\tilde{e}_{f,k}\|^2+4\|{w}_{f,k}\|^2+4\|e_{f,k}\|^2\right)\notag \\
&+2N^{-1}(\gamma_k\rho_k)^{r+1}\left(6C_F^2+3\|\tilde{w}_{F,k}\|^2+3\|\tilde{e}_{F,k}\|^2+4\|{w}_{F,k}\|^2+4\|{e}_{F,k}\|^2\right)\notag \\
&+\gamma_k^r\rho_k^{r-1}\left({w}_{f,k}+e_{f,k}+\rho_k{w}_{F,k}+\rho_k{e}_{F,k}\right)^T(u_k-y_{k+1}) +2(\gamma_k\rho_k)^{r}\rho_k^{-1}D_f.
\end{align}
Adding and subtracting $(\gamma_k\rho_k)^{r-1}\left(\|x_k-y\|^2+\|u_k-y\|^2 \right)$, for all $k \geq 1$ we have
\begin{align}\label{eqn:prop_bound2_02}
(\gamma_k\rho_k)^{r}F(y)^T(y_{k+1}-y) &\leq 0.5N(\gamma_{k-1}\rho_{k-1})^{r-1}\left(\|x_k-y\|^2  +\|u_k-y\|^2\right) \notag\\ 
&-0.5N(\gamma_k\rho_k)^{r-1}\left(\|x_{k+1}-y\|^2+\|u_{k+1}-y\|^2\right)\notag\\
&+ 0.5N\left((\gamma_k\rho_k)^{r-1}-(\gamma_{k-1}\rho_{k-1})^{r-1}\right)\left(\|x_k-y\|^2  +\|u_k-y\|^2\right)\notag\\
&+2N^{-1}(\gamma_k\rho_k)^{r+1}\rho_k^{-2}\left(6C_f^2+3\|\tilde{w}_{f,k}\|^2+3\|\tilde{e}_{f,k}\|^2+4\|{w}_{f,k}\|^2+4\|e_{f,k}\|^2\right)\notag \\
&+2N^{-1}(\gamma_k\rho_k)^{r+1}\left(6C_F^2+3\|\tilde{w}_{F,k}\|^2+3\|\tilde{e}_{F,k}\|^2+4\|{w}_{F,k}\|^2+4\|{e}_{F,k}\|^2\right)\notag \\
&+\gamma_k^r\rho_k^{r-1}\left({w}_{f,k}+e_{f,k}+\rho_k{w}_{F,k}+\rho_k{e}_{F,k}\right)^T(u_k-y_{k+1}) +2(\gamma_k\rho_k)^{r}\rho_k^{-1}D_f.
\end{align}
Note that because $r<1$ and that $\{\gamma_k\rho_k\}$ is nonincreasing, we have $(\gamma_k\rho_k)^{r-1}-(\gamma_{k-1}\rho_{k-1})^{r-1} \geq 0$. Thus, in view of Remark~\ref{rem:bounds} we have
\begin{align*}
 0.5N\left((\gamma_k\rho_k)^{r-1}-(\gamma_{k-1}\rho_{k-1})^{r-1}\right)\left(\|x_k-x^*\|^2  +\|u_k-x^*\|^2\right) \leq 4ND_X^2\left((\gamma_k\rho_k)^{r-1}-(\gamma_{k-1}\rho_{k-1})^{r-1}\right).
\end{align*}
Substituting the preceding bound in \eqref{eqn:prop_bound2_02} and then, summing the resulting inequality for $k=1,\ldots,K-1$ we obtain
\begin{align}\label{eqn:prop_bound2_03}
&\sum_{k=1}^{K-1}(\gamma_k\rho_k)^{r}F(y)^T(y_{k+1}-y) \leq 0.5N(\gamma_{0}\rho_{0})^{r-1}\left(\|x_1-y\|^2  +\|u_1-y\|^2\right) \notag\\
& + 4ND_X^2\left((\gamma_{K-1}\rho_{K-1})^{r-1}-(\gamma_{0}\rho_{0})^{r-1}\right)\notag\\
&+2N^{-1}\sum_{k=1}^{K-1}(\gamma_k\rho_k)^{r+1}\rho_k^{-2}\left(6C_f^2+3\|\tilde{w}_{f,k}\|^2+3\|\tilde{e}_{f,k}\|^2+4\|{w}_{f,k}\|^2+4\|e_{f,k}\|^2\right)\notag \\
&+2N^{-1}\sum_{k=1}^{K-1}(\gamma_k\rho_k)^{r+1}\left(6C_F^2+3\|\tilde{w}_{F,k}\|^2+3\|\tilde{e}_{F,k}\|^2+4\|{w}_{F,k}\|^2+4\|{e}_{F,k}\|^2\right)\notag \\
&+\sum_{k=1}^{K-1}\gamma_k^r\rho_k^{r-1}\left({w}_{f,k}+e_{f,k}+\rho_k{w}_{F,k}+\rho_k{e}_{F,k}\right)^T(u_k-y_{k+1}) +2D_f\sum_{k=1}^{K-1}(\gamma_k\rho_k)^{r}\rho_k^{-1}.
\end{align}
Consider \eqref{eqn:prop_bound2_01} for $k=0$. Summing that relation with \eqref{eqn:prop_bound2_03} we have
\begin{align}\label{eqn:prop_bound2_04}
&F(y)^T\left(\sum_{k=0}^{K-1}(\gamma_k\rho_k)^{r}y_{k+1}-y\right) \leq 0.5N(\gamma_0\rho_0)^{r-1}\left(\|x_0-y\|^2  +\|u_0-y\|^2\right)  \notag\\ &   +4ND_X^2\left((\gamma_{K-1}\rho_{K-1})^{r-1}-(\gamma_{0}\rho_0)^{r-1}\right)\notag\\
&+2N^{-1}\sum_{k=0}^{K-1}(\gamma_k\rho_k)^{r+1}\rho_k^{-2}\left(6C_f^2+3\|\tilde{w}_{f,k}\|^2+3\|\tilde{e}_{f,k}\|^2+4\|{w}_{f,k}\|^2+4\|e_{f,k}\|^2\right)\notag \\
&+2N^{-1}\sum_{k=0}^{K-1}(\gamma_k\rho_k)^{r+1}\left(6C_F^2+3\|\tilde{w}_{F,k}\|^2+3\|\tilde{e}_{F,k}\|^2+4\|{w}_{F,k}\|^2+4\|{e}_{F,k}\|^2\right)\notag \\
&+\sum_{k=0}^{K-1}\gamma_k^r\rho_k^{r-1}\left({w}_{f,k}+e_{f,k}+\rho_k{w}_{F,k}+\rho_k{e}_{F,k}\right)^T(u_k-y_{k+1}) +2D_f\sum_{k=0}^{K-1}(\gamma_k\rho_k)^{r}\rho_k^{-1}.
\end{align}
Dividing the both side of \eqref{eqn:prop_bound2_04} by $\sum_{k=0}^{K-1}(\gamma_k\rho_k)^r$, invoking Lemma~\ref{lemma:ave}, and $\|x_0-y\|^2  +\|u_0-y\|^2 \leq 8D_X^2$, we obtain
\begin{align}\label{eqn:prop_bound2_05}
F(y)^T(\bar{y}_K-y) &\leq \left(\sum_{k=0}^{K-1}(\gamma_k\rho_k)^r\right)^{-1}\left(4ND_X^2(\gamma_{K-1}\rho_{K-1})^{r-1}\right.\notag\\
&\left.+2N^{-1}\sum_{k=0}^{K-1}(\gamma_k\rho_k)^{r+1}\rho_k^{-2}\left(6C_f^2+3\|\tilde{w}_{f,k}\|^2+3\|\tilde{e}_{f,k}\|^2+4\|{w}_{f,k}\|^2+4\|e_{f,k}\|^2\right)\right.\notag \\
&\left.+2N^{-1}\sum_{k=0}^{K-1}(\gamma_k\rho_k)^{r+1}\left(6C_F^2+3\|\tilde{w}_{F,k}\|^2+3\|\tilde{e}_{F,k}\|^2+4\|{w}_{F,k}\|^2+4\|{e}_{F,k}\|^2\right)\right.\notag \\
&\left.+\sum_{k=0}^{K-1}\gamma_k^r\rho_k^{r-1}\left({w}_{f,k}+e_{f,k}+\rho_k{w}_{F,k}+\rho_k{e}_{F,k}\right)^T(u_k-y_{k+1}) +2D_f\sum_{k=0}^{K-1}(\gamma_k\rho_k)^{r}\rho_k^{-1}\right).
\end{align}
Taking the supremum \fyy{on} the both sides of \eqref{eqn:prop_bound2_05} with respect to $y$ over the set $X$ and invoking Definition~\ref{def:dual_gap}, we have
\begin{align*}
\mbox{Gap}^*(\bar{y}_K) &\leq \left(\sum_{k=0}^{K-1}(\gamma_k\rho_k)^r\right)^{-1}\left(4ND_X^2(\gamma_{K-1}\rho_{K-1})^{r-1}\right.\notag\\
&\left.+2N^{-1}\sum_{k=0}^{K-1}(\gamma_k\rho_k)^{r+1}\rho_k^{-2}\left(6C_f^2+3\|\tilde{w}_{f,k}\|^2+3\|\tilde{e}_{f,k}\|^2+4\|{w}_{f,k}\|^2+4\|e_{f,k}\|^2\right)\right.\notag \\
&\left.+2N^{-1}\sum_{k=0}^{K-1}(\gamma_k\rho_k)^{r+1}\left(6C_F^2+3\|\tilde{w}_{F,k}\|^2+3\|\tilde{e}_{F,k}\|^2+4\|{w}_{F,k}\|^2+4\|{e}_{F,k}\|^2\right)\right.\notag \\
&\left.+\sum_{k=0}^{K-1}\gamma_k^r\rho_k^{r-1}\left({w}_{f,k}+e_{f,k}+\rho_k{w}_{F,k}+\rho_k{e}_{F,k}\right)^T(u_k-y_{k+1}) +2D_f\sum_{k=0}^{K-1}(\gamma_k\rho_k)^{r}\rho_k^{-1}\right).
\end{align*}
Taking expectations \fyy{on} the both sides and applying Corollary \ref{cor:exp_terms} and Lemma \ref{lemma:aux_exp_zero}, we obtain
\begin{align*}
\mathbb{E}[\mbox{Gap}^*(\bar{y}_K)] &\leq \left(\sum_{k=0}^{K-1}(\gamma_k\rho_k)^r\right)^{-1}\left(4ND_X^2(\gamma_{K-1}\rho_{K-1})^{r-1}\right. \\
& \left. +2N^{-1}\sum_{k=0}^{K-1}(\gamma_k\rho_k)^{r+1}\rho_k^{-2}\left(6C_f^2+7\nu_f^2+7(N-1)\fyy{(\nu_f^2+C_f^2)}\right)\right.\notag \\
&\left.+2N^{-1}\sum_{k=0}^{K-1}(\gamma_k\rho_k)^{r+1}\left(6C_F^2+7\nu_F^2+7(N-1)\fyy{(\nu_F^2+C_F^2)}\right)+2D_f\sum_{k=0}^{K-1}(\gamma_k\rho_k)^{r}\rho_k^{-1}\right).
\end{align*}
Hence, we obtain the infeasibility \fyy{bound given by} \eqref{prop:infeas_bound}. 
\end{proof}
\fyy{The main result of this section is presented in the following theorem where we obtain convergence rates for solving problem~\eqref{prob:sopt_svi}. In particular, we} specify \fyy{update rules for} \fyy{stepsize} $\gamma_k$ and \fyy{penalty} parameter $\rho_k$ to \fyy{guarantee this performance for} Algorithm \ref{algorithm:aR-IP-SeG}. 
\begin{theorem}[Rate statements and iteration complexity guarantees]\label{thm:rates}\em
Consider Algorithm~\ref{algorithm:aR-IP-SeG} applied to problem~\eqref{prob:sopt_svi}. Suppose $r \in [0,1)$ is an arbitrary scalar. Let Assumptions~\ref{assum:problem} and~\ref{assum:rnd_vars} hold. Suppose, for any $k\geq 0$, the stepsize and the penalty sequence are given by
\begin{align*}
\gamma_k \triangleq \frac{\gamma_0}{\sqrt[4]{(k+1)^3}} \quad \hbox{and } \quad \rho_k \triangleq \rho_0\sqrt[4]{k+1}. 
\end{align*}
Then, for all $K\geq 2^{\frac{2}{1-r}}$ the following statements hold.

\noindent {\bf (i)}  The convergence rate in terms of the suboptimality is \fa{given as}
\begin{align*}
\mathbb{E}[f(\bar{y}_K)]-  f^*&\leq \left(\tfrac{D_X^2}{\gamma_0\rho_0}+\tfrac{\gamma_0\rho_0\left((7-N^{-1})C_F^2+\fyy{7\nu_F^2}+\tfrac{(7-N^{-1})C_f^2+\fyy{7\nu_f^2}}{\rho_0^2}\right)}{(1.5-r)N}\right)\frac{4\rho_0(2-r)N }{\sqrt[4]{K}}.
\end{align*}
 
\noindent {\bf (ii)}  The convergence rate in terms of the infeasibility is \fa{given as}
\begin{align*}
\mathbb{E}[\mbox{Gap}^*(\bar{y}_K)]&\leq \left(\tfrac{D_X^2}{\gamma_0\rho_0\sqrt[4]{K}}+\tfrac{\gamma_0\rho_0\left((7-N^{-1})C_F^2+\fyy{7\nu_F^2}+\tfrac{(7-N^{-1})C_f^2+\fyy{7\nu_f^2}}{\rho_0^2}\right)}{(1-r)N\sqrt[4]{K}}+\tfrac{D_fN^{-1}}{\rho_0(0.75-0.5r)}\right)\frac{4(2-r)N }{\sqrt[4]{K}}.
\end{align*}
\noindent {\bf (iii)} \fyy{Given $\epsilon>0$, let} $K_\epsilon$ \fyy{denote} a \fyy{deterministic} integer to achieve $\mathbb{E}[f(\bar{y}_{K_\epsilon})]-  f^* \leq \epsilon$ and $\mathbb{E}[\mbox{Gap}^*(\bar{y}_{K_\epsilon})] \leq \epsilon$. Then the total iteration complexity and also, the total sample complexity of Algorithm~\ref{algorithm:aR-IP-SeG} are the same and \fyy{are} $\mathcal{O}(N^4\epsilon^{-4})$ where $N$ denotes the number of blocks \fyy{(In particular, in the Nash game, $N$ denotes the number of players)}.
\end{theorem}
\begin{proof}\em  (i) Substituting the update rules of $\gamma_k$ and $\rho_k$ in \eqref{prop:subopt_bound}, we obtain
\begin{align*}
\mathbb{E}[f(\bar{y}_K)]-  f^*&\leq \frac{4ND_X^2(\gamma_{K-1}\rho_{K-1})^{r-1}\rho_{K-1}+2N^{-1}\sum_{k=0}^{K-1}(\gamma_k\rho_k)^{1+r}\rho_k\left(\theta_F+\theta_f\rho_k^{-2}\right)}{\sum_{k=0}^{K-1}(\gamma_k\rho_k)^r}\\
&\leq 
%\frac{4ND_X^2(\gamma_0\rho_0K^{-0.5})^{r-1}\rho_0K^{0.25}+2N^{-1}\rho_0\left(\theta_F+\theta_f\rho_0^{-2}\right)\sum_{k=0}^{K-1}(\gamma_0\rho_0(k+1)^{-0.5})^{1+r}(k+1)^{0.25}}{(\gamma_0\rho_0)^r\sum_{k=0}^{K-1}(k+1)^{-0.5r}}\\
%&= 
\frac{4ND_X^2\rho_0(\gamma_0\rho_0)^{r-1}K^{0.75-0.5r}+2N^{-1}\rho_0\left(\theta_F+\theta_f\rho_0^{-2}\right)(\gamma_0\rho_0)^{1+r}\sum_{k=0}^{K-1}(k+1)^{-(0.25+0.5r)}}{(\gamma_0\rho_0)^r\sum_{k=0}^{K-1}(k+1)^{-0.5r}}.
\end{align*}
Because $0\leq r<1$, note that both the terms $0.25+0.5r$ and $0.5r$ are nonnegative and smaller than $1$. This implies that the conditions of Lemma~\ref{lemma:harmonic_bnds} are met. Employing the bounds provided by Lemma~\ref{lemma:harmonic_bnds}, from the preceding inequality we have
\begin{align*}
\mathbb{E}[f(\bar{y}_K)]-  f^*&\leq  \frac{4ND_X^2\rho_0(\gamma_0\rho_0)^{r-1}K^{0.75-0.5r}+2N^{-1}\rho_0\left(\theta_F+\theta_f\rho_0^{-2}\right)(\gamma_0\rho_0)^{1+r}(0.75-0.5r)^{-1}K^{0.75-0.5r}}{0.5(1-0.5r)^{-1}(\gamma_0\rho_0)^rK^{1-0.5r}}\\
&=\frac{(2-r)\left(4ND_X^2\rho_0(\gamma_0\rho_0)^{-1}+2N^{-1}\rho_0\left(\theta_F+\theta_f\rho_0^{-2}\right)(\gamma_0\rho_0)(0.75-0.5r)^{-1}\right)}{K^{0.25}}.
\end{align*}
Substituting $\theta_f$ and $\theta_F$ by their values and then, rearranging the terms we obtain the desired rate statement in (i).

\noindent (ii) Next we derive the non-asymptotic rate statement in terms of the infeasibility. Substituting the update rules of $\gamma_k$ and $\rho_k$ in \eqref{prop:infeas_bound}, and noting that $\gamma_k$ and $\rho_k^{-1}$ are nonincreasing, we obtain
\begin{align*}
\mathbb{E}[\mbox{Gap}^*(\bar{y}_K)]&\leq \frac{4ND_X^2(\gamma_{K-1}\rho_{K-1})^{r-1}+2N^{-1}\sum_{k=0}^{K-1}(\gamma_k\rho_k)^{r}\left(\theta_F\gamma_k\rho_k+\theta_f\gamma_k\rho_k^{-1}+2ND_f\rho_k^{-1}\right)}{\sum_{k=0}^{K-1}(\gamma_k\rho_k)^r}\\
&\leq \frac{4ND_X^2(\gamma_{K-1}\rho_{K-1})^{r-1}+2N^{-1}(\theta_F+\theta_f\rho_0^{-2})\sum_{k=0}^{K-1}(\gamma_k\rho_k)^{r+1}+4D_f\sum_{k=0}^{K-1}(\gamma_k\rho_k)^{r}\rho_k^{-1}}{\sum_{k=0}^{K-1}(\gamma_k\rho_k)^r}\\
&\leq \frac{4ND_X^2(\gamma_0\rho_0K^{-0.5})^{r-1}+2N^{-1}\left(\theta_F+\theta_f\rho_0^{-2}\right)(\gamma_0\rho_0)^{1+r}\sum_{k=0}^{K-1}(k+1)^{-0.5(1+r)}}{(\gamma_0\rho_0)^r\sum_{k=0}^{K-1}(k+1)^{-0.5r}}\\
&+\frac{4D_f(\gamma_0\rho_0)^{r}\rho_0^{-1}\sum_{k=0}^{K-1}(k+1)^{-0.5r-0.25}}{(\gamma_0\rho_0)^r\sum_{k=0}^{K-1}(k+1)^{-0.5r}}.
\end{align*}
Employing the bounds provided by Lemma~\ref{lemma:harmonic_bnds}, from the preceding inequality we have
\begin{align*}
\mathbb{E}[\mbox{Gap}^*(\bar{y}_K)]&\leq \frac{4ND_X^2(\gamma_0\rho_0)^{-1}K^{-0.5(r-1)}+2N^{-1}\left(\theta_F+\theta_f\rho_0^{-2}\right)(\gamma_0\rho_0)(1-0.5(1+r))^{-1}K^{1-0.5(1+r)}}{0.5(1-0.5r)^{-1}K^{1-0.5r}}\\
&+\frac{4D_f\rho_0^{-1}(1-0.5r-0.25)^{-1}K^{1-0.5r-0.25}}{0.5(1-0.5r)^{-1}K^{1-0.5r}}\\
& \leq (2-r)\frac{4ND_X^2(\gamma_0\rho_0)^{-1}+4N^{-1}\left(\theta_F+\theta_f\rho_0^{-2}\right)(\gamma_0\rho_0)(1-r)^{-1}}{K^{0.5}}\\
&+(2-r)\frac{4D_f\rho_0^{-1}(0.75-0.5r)^{-1}}{K^{0.25}}.
\end{align*}
The rate statement in (ii) can be obtained by substituting $\theta_f$ and $\theta_F$ by their values and then, rearranging the terms.

\noindent (iii) The result of part (iii) holds directly from the rate statements in parts (i) and (ii). 
\end{proof}

\section{Approximating the price of stability}\label{sec:pos}
Our goal in this section lies in devising a stochastic scheme for \fyy{approximating} the price of stability, defined by \eqref{pos}, in monotone stochastic Nash games. The proposed scheme includes three main steps described as follows\fa{:} 

\noindent {\bf (i)} Employing Algorithm~\ref{algorithm:aR-IP-SeG} for approximating a solution to the optimization problem \eqref{prob:sopt_svi}. 

\noindent {\bf (ii)} Employing a stochastic approximation method for approximating a solution to the nonsmooth stochastic optimization problem $\min_{x\in X}\mathbb{E}[f(x,\xi)]$. This can be done through a host of well-known methods including the stochastic \fa{subgradient}~\cite{nemirovski_robust_2009,Farzad1} and its accelerated smoothed variants~\cite{jalilzadeh2018smoothed}. Another avenue for solving this class of problems is stochastic \fyy{extra-subgradient} methods~\cite{juditsky2011solving,Nem04,yousefian2014optimal,yousefian2018stochastic,iusem2017extragradient}.

\noindent {\bf (iii)} Lastly, given the two approximate optimal solutions in (i) and (ii), we estimate the objective function value $\mathbb{E}[f(x,\xi)]$ at each solution. The PoS is then \fyy{approximated} by dividing the \fyy{sample average approximation of optimal} objective value of problem \eqref{prob:sopt_svi} by that of $\min_{x\in X}\mathbb{E}[f(x,\xi)]$.

An example of this scheme is presented by Algorithm~\ref{algorithm:pos}. Here, vectors $y_{k,1}$ and $x_{k,1}$ are generated by Algorithm~\ref{algorithm:aR-IP-SeG}, while $y_{k,2}$ and $x_{k,2}$ are generated by a standard stochastic extra-subgradient method for solving $\min_{x\in X}\mathbb{E}[f(x,\xi)]$. \fyy{We provide} the following remark to make clarifications about this \fyy{scheme}. 
\begin{remark}\em 
As mentioned earlier, we do have several options in employing a method for solving the canonical nonsmooth stochastic optimization problem $\min_{x\in X}\mathbb{E}[f(x,\xi)]$. Here, we use the stochastic extra-subgradient method that is known to achieve the convergence rate of the order $\frac{1}{\sqrt{K}}$ when employing a suitable weighted averaging scheme specified by \eqref{eqn:averaging_pos_eq2} (cf.~\cite{yousefian2018stochastic}). We also note that Algorithm~\ref{algorithm:pos} can be compactly presented \fyy{by the} two extra-subgradient schemes, separately. However, we note that there are different groups of random samples generated in Algorithm~\ref{algorithm:pos} and the analysis \fyy{of the scheme} relies on what assumptions we make on these samples, \fyy{presented in the following}. %As such, we explicitly present these schemes as a unifying scheme to be clear about our assumptions that play a key role in deriving the confidence interval. 
\end{remark}
%We make the following assumption about the random samples in Algorithm~\ref{algorithm:pos}.
\begin{assumption}\label{assum:vars_alg_pos}\em Let the following statements hold.

\noindent (i) The random samples $\{ \xi_{k,1}\}_{k=0}^{K-1}$, $\{\tilde \xi_{k,1}\}_{k=0}^{K-1}$, $\{\xi_{k,2}\}_{k=0}^{K-1}$, $\fyy{\{\tilde \xi_{k,2}\}_{k=0}^{K-1}}$, \fyy{and $\{ \zeta_{t}\}_{t=0}^{M-1}$} are i.i.d. associated with the probability space $(\Omega, \mathcal{F},\mathbb{P})$. Also, $\{\tilde i_{k,1}\}_{k=0}^{K-1}$, $\{\tilde i_{k,1}\}_{k=0}^{K-1}$, $\{i_{k,2}\}_{k=0}^{K-1}$, and $\{\tilde i_{k,2}\}_{k=0}^{K-1}$ are i.i.d. uniformly distributed within the range $\{1,\ldots,N\}$. Additionally, all the aforementioned random variables are independent from each other. 
 
\noindent (ii) $f(\bullet,\xi)$ is an unbiased estimator of the deterministic function $f(\bullet)$.
\end{assumption}

{To approximate the \fa{PoS}, we need upper and lower bounds for suboptimality of problem~ \eqref{prob:sopt_svi}. We established the upper bound in Theorem \ref{thm:rates}. Now we obtain the lower bound considering the following weak sharpness assumption. 

\begin{assumption}[Weak Sharpness \cite{cui2016analysis}]\label{sharp}\em
The variational inequality problem VI(X,F) satisfies the weak sharpness property implying that there exists an $\alpha > 0$ such that $(x-x^*)^T F (x^*) \geq \alpha \mbox{dist} (x, X^*)$ \fa{ for any $x \in X^*$, where $X^*$ denotes the solution set of VI$(X,F)$}.
\end{assumption}

\begin{corollary}\label{lower bound}\em 
Under the premises of Theorem \ref{thm:rates} and considering Assumption \ref{sharp}, \fa{we have for all $K\geq 2$}
$$-\frac{\mathcal O(N)}{\sqrt[4]{K}}\leq \fa{\mathbb{E}}[f(\bar y_K)-f^*]\leq \frac{\mathcal O(N)}{\sqrt[4]{K}}.$$
\end{corollary}
\begin{proof}\em
From Assumption \ref{sharp}, we know that there exists $\alpha>0$ such that $\mathbb E[\mbox{dist}(\bar y_K,X^*)]\leq \frac{1}{\alpha}\mathbb{E}[\mbox{Gap}^*(\bar{y}_K)]$. Moreover, since $X^*$
 is a compact set, there exists $\hat y^*\in X^*$ such that $\mbox{dist}(\bar y_K,X^*)=\min_{y\in X^*}\|y-\bar y_k\|=\|\hat y^*-\bar y_K\|$. Therefore, using the result of Theorem \ref{thm:rates}, we have
 \begin{align}\label{lower gap}\mathbb E[\|\hat y^*-\bar y_K\|]\leq \frac{1}{\alpha}\mathbb{E}[\mbox{Gap}^*(\bar{y}_K)]\leq \frac{\mathcal O(N)}{\sqrt[4]{K}}.\end{align} Moreover, using convexity of $f$ and Cauchy-Schwartz inequality, we conclude that
 $$\mathbb E[f(\bar y_k)]-f^*\geq\mathbb E[f(\bar y_k)]-f(\hat y^*)\geq \mathbb E[  \fyy{\nabla f(\hat y^*)^T\left(\bar y_K-\hat y^*\right)}]\geq -\|\nabla f(\hat y^*)\| \mathbb E[\|\bar y_K-\hat y^*\|]\geq -\frac{\mathcal O(N)}{\sqrt[4]{K}},$$
 where in the first inequality we used the fact that $f^*\leq f(\hat y^*)$ and the last inequality follows from \eqref{lower gap} and the fact that the gradient is bounded. 
 \end{proof}
}

The main result in this section is presented in the following
\begin{lemma}[\fyy{Error bounds in approximating the PoS}]\label{prop:pos_bounds}\em
Consider Algorithm~\ref{algorithm:pos}.  Let Assumptions~\ref{assum:problem},~\ref{assum:rnd_vars},~\ref{assum:vars_alg_pos}, \afj{and ~\ref{sharp}} hold. Suppose, $r_1, r_2 \in[0,1)$ be fixed scalars and for any $k\geq 0$, let us define
\begin{align*}
\gamma_{k,1} \triangleq \frac{\gamma_{0,1}}{\sqrt[4]{(k+1)^3}}, \quad \rho_k \triangleq \rho_0\sqrt[4]{k+1}, \quad \gamma_{k,2} \triangleq \frac{\gamma_{0,2}}{\sqrt{k+1}}. 
\end{align*}
Then the following holds
\begin{align}\label{eqn:bounds_pos}
\afj{-\mathcal{O}\left(\frac{1}{\sqrt[4]{K}}\right)} 
\leq \frac{\mathbb{E}[\hat{f}(\bar{y}_{\me{K},1})]}{\mathbb{E}[\hat{f}(\bar{y}_{\me{K},2})]}-\mbox{PoS}
 \leq \mathcal{O}\left(\frac{1}{\sqrt[4]{K}}\right). 
\end{align}
\end{lemma}
\begin{algorithm}[H]
  \caption{\footnotesize Approximating PoS using randomized stochastic \fa{extra-gradient} schemes}
\label{algorithm:pos}
%\begin{boxedminipage}{165mm}
    \begin{algorithmic}[1]
    \STATE \textbf{initialization:} Set random initial points $x_{0,1},x_{0,2}, y_{0,1},y_{0,2}\in X$, initial stepsizes $\gamma_{0,1}, \gamma_{0,2}>0$, scalar $0 \leq r_1, r_2<1$, $\bar{y}_{0,1} = \bar{y}_{0,2}:= y_0$, $\Gamma_{0,1}=\Gamma_{0,2}:=0$, $S_{0,1}=S_{0,2} : = 0$.
    \FOR {$k=0,1,\ldots,K-1$}
     \STATE Generate $i_{k,1}$, $\tilde i_{k,1}$, $i_{k,2}$, and $\tilde{i}_{k,2}$ uniformly from $\{1,\ldots,N\}$.
     \STATE Generate $\xi_{k,1}$, $\tilde \xi_{k,1}$, $\xi_{k,2}$, and $\tilde \xi_{k,2}$ as realizations of the random vector $\xi$.
      \STATE Update the variables $y_{k,1}$, $x_{k,1}$, $y_{k,2}$, and $x_{k,2}$ \fa{as}
\begin{align}
y_{k+1,1}^{(i)}&:= \left\{\begin{array}{ll}\mathcal{P}_{X_i}\left(x_{k,1}^{(i)}-\gamma_{k,1}(\tilde \nabla_i f(x_{k,1},\tilde \xi_{k,1}) + \rho_k F_{i}(x_{k,1},\tilde \xi_{k,1}))\right)
&\hbox{if } i=\tilde i_{k,1},\cr \hbox{} &\hbox{}\cr
x_{k,1}^{(i)}& \hbox{if } i\neq \tilde i_{k,1},\end{array}\right.\label{eqn:y_k_update_rule_pos}
\\
&\hbox{} \notag\\
x_{k+1,1}^{(i)}&:=\left\{\begin{array}{ll}\mathcal{P}_{X_i}\left(x_{k,1}^{(i)} - \gamma_{k,1}(\tilde \nabla_i f(y_{k+1,1}, \xi_{k,1}) + \rho_k F_{i}(y_{k+1,1}, \xi_{k,1}))\right)
&\hbox{if } i=i_{k,1},\cr \hbox{} &\hbox{}\cr
x_{k,1}^{(i)}& \hbox{if } i\neq i_{k,1},\end{array}\right.\label{eqn:x_k_update_rule_pos}\\
&\hbox{} \notag\\
y_{k+1,2}^{(i)}&:= \left\{\begin{array}{ll}\mathcal{P}_{X_i}\left(x_{k,2}^{(i)}-\gamma_{k,2}\tilde \nabla_i f(x_{k,2},\tilde \xi_{k,2}) \right)
&\hbox{if } i=\tilde i_{k,2},\cr \hbox{} &\hbox{}\cr
x_{k,2}^{(i)}& \hbox{if } i\neq \tilde i_{k,2},\end{array}\right.\label{eqn:y_k_update_rule_pos_2}
\\
&\hbox{} \notag\\
x_{k+1,2}^{(i)}&:=\left\{\begin{array}{ll}\mathcal{P}_{X_i}\left(x_{k,2}^{(i)} - \gamma_{k,2} \tilde \nabla_i f(y_{k+1,2}, \xi_{k,2}) \right)
&\hbox{if } i=i_{k,2},\cr \hbox{} &\hbox{}\cr
x_{k,2}^{(i)}& \hbox{if } i\neq i_{k,2}.\end{array}\right.\label{eqn:x_k_update_rule_pos_2}
\end{align}
     \STATE Update $\Gamma_{k,1}$, $\Gamma_{k,2}$, $\bar y_{k,1}$, and $\bar y_{k,2}$ using the following recursions.
\begin{align}
&\Gamma_{k+1,1}:=\Gamma_{k,1}+(\gamma_{k,1}\rho_k)^{r_1}, \quad \bar y_{k+1,1}:=\frac{\Gamma_{k,1} \bar y_{k,1}+(\gamma_{k,1}\rho_k)^{r_1} y_{k+,1}}{\Gamma_{k+1,1}},\label{eqn:averaging_pos_eq1}\\
&\Gamma_{k+1,2}:=\Gamma_{k,2}+\gamma_{k,2}^{r_2}, \quad \bar y_{k+1,2}:=\frac{\Gamma_{k,2} \bar y_{k,2}+\gamma_{k,2}^{r_2} y_{k+1,2}}{\Gamma_{k+1,2}}\label{eqn:averaging_pos_eq2}.
\end{align}
    \ENDFOR
%    \FOR {$t=0,1,\ldots,K-1$}
%     \STATE Generate $\fyy{\xi_{t}^{K}}$ and $\fyy{\tilde{\xi}_{t}^{K}}$ as realizations of the random vector $\xi$. 
%     \STATE Do the following updates.
%      \begin{align}
%      &S_{t+1,1}: = S_{t,1} + f\left(\bar y_{K,1}, {\xi_{t}^{K}}\right),\label{eqn:S1}\\
%      & S_{t+1,2}: = S_{t,2} + f\left(\bar y_{K,2}, {\tilde{\xi}_{t}^{K}}\right).\label{eqn:S2}
%   \end{align}

       % \ENDFOR
       \STATE \fyy{Generate the batch of samples $\{\zeta_{t}\}$ as i.i.d realizations of $\xi$, for $t=0,\ldots, M-1$}
       \STATE \fyy{Evaluate sample average approximations $\hat{f}_{M}(\bar y_{K,1}):= \frac{1}{M}\sum_{t=0}^{M-1}f\left(\bar y_{K,1}, {\zeta_{t}}\right)$ and $\hat{f}_{M}(\bar y_{K,2}):=\frac{1}{M}\sum_{t=0}^{M-1}f\left(\bar y_{K,2}, \zeta_{t}\right)$}
     \STATE \fyy{Return $\frac{\hat{f}_{M}(\bar y_{K,1})}{\hat{f}_{M}(\bar y_{K,2})}$.}
   \end{algorithmic}
%\end{boxedminipage}
% \vskip17.5pt
\end{algorithm}

\begin{proof}\em \fyy{We utilize the following notation in the proof
\begin{align*}
\mathcal{F}_{k,1}& \triangleq \cup_{t=0}^k\{\tilde \xi_{t,1},\tilde i_{t,1},  \xi_{t,1},  i_{t,1}\} \cup \{x_{0,1},y_{0,1}\},\qquad \hbox{for all } k \in\{0,\ldots,K-1\},\\
\mathcal{F}_{k,2}& \triangleq \cup_{t=0}^k\{\tilde \xi_{t,2},\tilde i_{t,2},  \xi_{t,2},  i_{t,2}\} \cup \{x_{0,2},y_{0,2}\},\qquad \hbox{for all } k \in\{0,\ldots,K-1\}.%\\
 %\mathcal{H}_{t,1}& \triangleq \cup_{j=0}^{t}\{ \xi_{j}^{K} \}  ,\qquad \hbox{for all } t \in\{0,\ldots,K-1\},\\
 %\mathcal{H}_{t,2}& \triangleq \cup_{j=0}^{t}\{ \tilde{\xi}_{j}^{K} \}  ,\qquad \hbox{for all } t \in\{0,\ldots,K-1\}.
\end{align*}
}
\fyy{Recall the definitions $\hat{f}_{M}(\bar y_{K,1}):= \frac{1}{M}\sum_{t=0}^{M-1}f\left(\bar y_{K,1}, {\zeta_{t}}\right)$ and $\hat{f}_{M}(\bar y_{K,2}):=\frac{1}{M}\sum_{t=0}^{M-1}f\left(\bar y_{K,2}, \zeta_{t}\right)$}. \fyy{Then, we can write}
\begin{align*}
   \mathbb{E}\left[\fyy{\hat{f}}_{M}(\bar y_{K,1})\right]  = \fyy{\mathbb{E}\left[\mathbb{E}\left[\fyy{\hat{f}}_{M}(\bar y_{K,1})\mid \mathcal{F}_{K-1,1}\right]\right] =      \mathbb{E}\left[\mathbb{E}\left[\tfrac{1}{M}\sum_{t=0}^{M-1}f\left(\bar y_{K,1},\zeta_{t}\right)\mid \mathcal{F}_{K-1,1}\right]\right] = }\mathbb{E}[f(\bar y_{K,1})]. 
   \end{align*}
From the preceding relation and \fyy{Theorem \ref{thm:rates}} we have
\begin{align*}
  \afj{-\frac{\mathcal O(N)}{\sqrt[4]{K}}}   \leq \fyy{\mathbb{E}\left[\hat{f}_{M}(\bar y_{K,1})\right]- \fa{f^*} }\leq  \frac{\fyy{\mathcal{O}(N)}}{\sqrt[4]{K}},
\end{align*}
\fyy{where $\fa{f^*}$ denotes the optimal objective value of problem~\eqref{prob:sopt_svi}. Let us define $f^*_{Opt}\triangleq \min_{x\in X}\mathbb E[f(x,\xi)]$. Similarly, 
\begin{align*}
   \mathbb{E}\left[\fyy{\hat{f}}_{M}(\bar y_{K,2})\right]  = \mathbb{E}\left[\mathbb{E}\left[\fyy{\hat{f}}_{M}(\bar y_{K,2})\mid \mathcal{F}_{K-1,2}\right]\right] =      \mathbb{E}\left[\mathbb{E}\left[\tfrac{1}{M}\sum_{t=0}^{M-1}f\left(\bar y_{K,2},\xi_t\right)\mid \mathcal{F}_{K-1,2}\right]\right] = \mathbb{E}[f(\bar y_{K,2})]. 
\end{align*}
and we also have that  
\begin{align*}
  0   \leq \fyy{\mathbb{E}\left[\hat{f}_{M}(\bar y_{K,2})\right]}- f^*_{Opt} \leq  \frac{\fyy{\mathcal{O}(N)}}{\sqrt{K}}.
\end{align*}
\afj{We show the result holds when $\fa{f^*},f^*_{Opt}\geq 0$ and one can verify that the result also holds for other cases}. \fyy{From the definition of PoS given by \eqref{pos} and the two preceding inequalities, we can write
\begin{align*}
 \frac{\mathbb{E}[\hat{f}(\bar{y}_{\me{K},1})]}{\mathbb{E}[\hat{f}(\bar{y}_{\me{K},2})]} \leq \frac{\fa{f^*}+\frac{\fyy{\mathcal{O}(N)}}{\sqrt[4]{K}}}{f^*_{Opt}} 
 = \frac{\fa{f^*}}{f^*_{Opt}} +\frac{\fyy{\mathcal{O}(N)}}{\sqrt[4]{K}} = \mbox{PoS}  +\frac{\fyy{\mathcal{O}(N)}}{\sqrt[4]{K}}. 
\end{align*}
We can also write
\begin{align*}
 \frac{\mathbb{E}[\hat{f}(\bar{y}_{\me{K},1})]}{\mathbb{E}[\hat{f}(\bar{y}_{\me{K},2})]} \geq \frac{\fa{f^*}\afj{-\frac{\mathcal O(N)}{\sqrt[4]{K}}} }{f^*_{Opt}+\frac{\fyy{\mathcal{O}(N)}}{\sqrt{K}}} 
  = \left(\frac{1\afj{-\frac{\mathcal O(N)}{\sqrt[4]{K}}} }{1+\frac{\fyy{\mathcal{O}(N)}}{\sqrt{K}}}\right)\times \mbox{PoS} 
  % \left(1 - \frac{\frac{\fyy{\mathcal{O}(N)}}{\sqrt{K}}\afj{+\frac{\mathcal O(N)}{\sqrt[4]{K}}} }{1+\frac{\fyy{\mathcal{O}(N)}}{\sqrt{K}}}\right)\times \mbox{PoS}\\
  &\implies \frac{\mathbb{E}[\hat{f}(\bar{y}_{\me{K},1})]}{\mathbb{E}[\hat{f}(\bar{y}_{\me{K},2})]} -\mbox{PoS}\geq -\frac{\mathcal{O}\left(N\right)}{\afj{\sqrt[4]{K}}}.% \geq \left(1-\frac{\mathcal{O}\left(N\right)}{\afj{\sqrt[4]{K}}} \right)\times \mbox{PoS}.
\end{align*}
Thus, in view of the two preceding inequalities, the result holds.
} 
}

\end{proof}

 \fyy{\begin{remark}\em We note that in Algorithm~\ref{algorithm:pos}, in using the extra-gradient method employed for solving $\min_{x\in X} \mathbb{E}[f(x,\xi)]$, we do not use any penalization. However, in solving $\min_{x\in {\tiny \hbox{SOL}}(X,\mathbb{E}[F(\bullet,\xi)])} \mathbb{E}[f(x,\xi)]$, we employ Algorithm 1 where we utilize iterative penalization. Intuitively speaking, problem $\min_{x\in X} \mathbb{E}[f(x,\xi)]$ can be viewed as a special case of $\min_{x\in {\tiny \hbox{SOL}}(X,\mathbb{E}[F(\bullet,\xi)])} \mathbb{E}[f(x,\xi)]$ where the mapping $F(x)$ is zero for all $x$. As such, we suppress the penalization in solving $\min_{x\in X} \mathbb{E}[f(x,\xi)]$. This allows us to use larger stepsizes in solving $\min_{x\in X} \mathbb{E}[f(x,\xi)]$ and obtain faster convergence for the optimality metric.

Moreover, in Algorithm~\ref{algorithm:pos}, in solving $\min_{x\in X} \mathbb{E}[f(x,\xi)]$, we employ the averaging weights $  \frac{(\gamma_{k,2})^r}{\sum_{j=0}^{K-1} (\gamma_{j,2})^r}$. However, in solving $\min_{x\in {\tiny \hbox{SOL}}(X,\mathbb{E}[F(\bullet,\xi)])} \mathbb{E}[f(x,\xi)]$, we use the averaging weights $  \frac{(\gamma_{k,1}\rho_k)^r}{\sum_{j=0}^{K-1} (\gamma_{j,1}\rho_j)^r}$. We note that in view of the choices of the stepsizes and penalty parameter in Lemma~\ref{prop:pos_bounds}, the averaging weights of the two schemes are indeed almost identical. This is because in Lemma~\ref{prop:pos_bounds}, assuming that $\gamma_{0,1}\rho_0 = \gamma_{0,2}$, we have $\gamma_{k,1}\rho_k = \gamma_{k,2}$ for all $k$.
 \end{remark}}

\section{Numerical Experiments}\label{sec:num}
In this section we present the performance of the proposed schemes in estimating the price of stability for a stochastic Nash Cournot competition over a network. Cournot game is one of the most popular and amongst the first economic models for formulating the competition among multiple firms (see~\cite{JohariThesis,FacchineiPang2003} \fyy{for} the applications of Cournot models in imperfectly competitive power markets and also, rate control in communication
networks). The Cournot model is described as follows. Consider a collection of $N$ firms who compete over a network with $J$ nodes to sell a product. The strategy of firm $i \in\{1, \dots, N\}$ is characterized by the decision variables $y_{ij}$ and $s_{ij}$, denoting the generation and sales of firm $i$ at the node $j$, respectively.  Compactly, the decision variables of the $i^{th}$ firm is denoted by $x^{(i)} \triangleq \left(y_i, s_i\right) \in \mathbb{R}^{2J}$ where we assume that $\ y_i \triangleq \left(y_{i1}, \dots, y_{iJ}\right)$ and $s_i \triangleq \left(s_{i1}, \dots, s_{iJ}\right)$. The goal of the $i^{th}$ firm lies in minimizing the expected value of a net cost function $f_i\left(x^{(i)}, x^{(-i)},\xi\right)$ over the network over the strategy set $X_i$. This optimization problem for the firm $i$ is defined \fa{as}
\begin{align*}
\text{minimize} \qquad &\mathbb{E}\left[f_i\left(x^{(i)}, x^{(-i)},\xi\right)\right]   \triangleq \mathbb{E}\left[\sum_{j=1}^{{J}} c_{ij} (y_{ij})- \sum_{j=1}^{{J}} s_{ij}p_j\left(\bar{s}_j,\xi\right)\right]\\
\text{Subject to.} \qquad & x^{(i)} \in X_i \triangleq \left \{  \left(y_i, s_i\right) \mid    y_{ij} \leq \mathcal{B}_{ij},\sum_{j=1}^{J} y_{ij} = \sum_{j=1}^{J} s_{ij}, \quad y_{ij}, s_{ij} \geq 0,\ \text{ for all } j = 1, \dots, J \right \}. 
\end{align*}
Here, $\bar{s}_j \triangleq \sum_{i=1}^{d}s_{ij}$ denotes the aggregate sales from all the firms at node $j$, $p_j:\mathbb{R}\times \Omega\to \mathbb{R}$ denotes the price function characterized in terms of the aggregate sales at the node $j$ and a random variable $\xi$, and $c_{ij}:\mathbb{R}\to \mathbb{R}$ denotes the production cost function of firm $i$ at node $j$. The price functions are given as $p_j \left(\bar{s}_j,\xi\right) \triangleq \alpha_j(\xi) - \beta_j \left(\bar{s}_j\right)^{\sigma}$, where $\alpha_j(\xi)$ is a random positive variable, $ \beta_j$ is a positive scalar, and $\sigma \geq 1$. We assume that cost \fyy{functions} are linear and the transportation costs are zero. The constraint $y_{ij} \leq \mathcal{B}_{ij}$ states that the generation is capacitated where $\mathcal{B}_{ij}$ is a positive scalar for all $i$ and $j$. Similar to~\cite{kaushik2021method}, in defining a global objective function for the price of stability, we consider \fyy{the} Marshallian aggregate surplus function defined as $$\mathbb{E}[f(x,\xi)]\triangleq \sum_{i=1}^N \mathbb{E}\left[f_i\left( x^{(i)}, x^{(-i)},\xi   \right)\right]. $$
%\me{\begin{remark} \em
%As we mentioned before, we can also use the egalitarian approach and for formulating it we can use the formula in
%\end{remark}}

It has been shown~\cite{KannanShanbhag2012} that when $\sigma \geq 1$, $f$ is convex and also, when either $\sigma =1$ or $1<\sigma\leq 3$ and $N\leq \frac{3\sigma-1}{\sigma-1}$, the mapping associated with the Cournot game, i.e., $F(x)\triangleq \left(\nabla_{x^{(1)}} \mathbb{E}[f_1(x,\xi)], \ldots,\nabla_{x^{(N)}} \mathbb{E}[f_N(x,\xi)] \right)$ is merely monotone.
 
\textbf{Experiments and set-up.} We compare the performance of Algorithm~\ref{algorithm:aR-IP-SeG} with that of the two existing methods, namely \fyy{aRB-IRG} in~\cite{kaushik2021method} and the sequential regularization (SR) scheme (cf.~\cite{FacchineiPang2003,kaushik2021method}). Note that both the SR scheme and \fyy{aRB-IRG} can only use deterministic gradients. To apply these two \fyy{methods}, we use a sample average approximation scheme by assuming that the deterministic gradient is approximated using a batch size of $1000$ random samples. In Algorithm~\ref{algorithm:aR-IP-SeG}, however, we can use stochastic gradients (using a single sample $\xi$). In both Algorithm~\ref{algorithm:aR-IP-SeG} and \fyy{aRB-IRG}, we employ a randomized block-coordinate scheme with $N$ number of blocks, where $N$ is the number of firms. We consider four different settings in our simulation results, where they differ in terms of the choices of the initial stepsize, the initial regularization parameter \fyy{used in} \fyy{aRB-IRG}, and the initial penalty parameter. For each setting, we implement the three methods on \fyy{four} different Cournot games, one with $2$ players over a network with $2$ nodes, \fyy{one with $4$ players over a network with $5$ nodes, one with $10$ players over a network with $2$ nodes}, and another with $10$ players over a network with $10$ nodes. We assume that $\alpha_j(\xi)$ is uniformly distributed for all the agents. To compare the simulation results, we generate $15$ independent sample-paths for \fyy{any of the schemes that are stochastic and/or randomized}.

\fyy{ \begin{table}[H]
\centering{
\caption{The four settings for the algorithm parameters}
\label{table:settings}
\begin{tabular}{||c |  c | c c c c||} 
 \hline
Algorithm & Parameter(s)  & Setting 1  & Setting 2 & Setting 3  & Setting 4  \\ [0.5ex] 
 \hline\hline
 SR scheme & $\gamma_0$ & 0.1 &  0.1 & 1  & 1 \\ 
   \hline
 aRB-IRG & $(\gamma_0,\eta_0)$   & (0.1,0.1) &  (0.1,1)  &  (1,0.1)  &  (1,1) \\ 
 \hline
aR-IP-SeG & $(\gamma_0,\rho_0)$ & (0.01,10) & (0.1,1)  & (0.1,10)  & (1,1) \\ 
[1ex] 
 \hline
\end{tabular}}
\vspace{-.1in}
\end{table}}

 \begin{figure}
    \centering
    \includegraphics[width=0.3\textwidth]{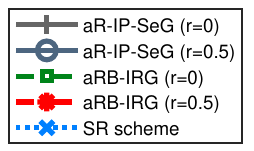}
    \caption{The figure legend used in the numerical experiments in Figures~\ref{fig:comparison1}--\ref{fig:comparison4}}
    \label{fig:legends}
\end{figure}

\textbf{Results and insights.} The simulation results are presented in Figures~\ref{fig:comparison1}-\ref{fig:comparison4}, and~\ref{fig:pos}. \fa{Note that the legend for Figures~\ref{fig:comparison1}-\ref{fig:comparison4} is presented in Figure~\ref{fig:legends}}. Several observations can be made: (i) As it can be seen in \fyy{Figures~\ref{fig:comparison1}-\ref{fig:comparison4}}, Algorithm~\ref{algorithm:aR-IP-SeG} outperforms the other two methods in almost all the scenarios. \fyy{We note that a smaller gap function value implies a smaller infeasibility for the solution iterate. However, because the solution iterate may be infeasible during the implementation of aRB-IRG and aR-IP-SeG , a smaller objective value may not necessarily imply a better solution. Instead, when comparing the objective function metric in the figures, it is important to observe how fast the objective value of each method reaches to a stable value.} (ii) Although both Algorithm~\ref{algorithm:aR-IP-SeG} and \fyy{aRB-IRG} are equipped with the same convergence speeds, Algorithm~\ref{algorithm:aR-IP-SeG} enjoys a better performance with respect to the run-time. This is because it uses stochastic gradients that are cheaper to compute in contrast with the sample average gradients used in \fyy{aRB-IRG}. (iii) We do observe that as the size of the problem increases in terms of the number of players and the size of the network, the performance of all the schemes is downgraded. However, Algorithm~\ref{algorithm:aR-IP-SeG} seems to stay robust across most settings and often outperforms the other two methods. (vi) In estimating the PoS in Figure~\ref{fig:pos}, \fyy{the methods seem to converge to a PoS smaller than one. This is because in this numerical experiment, we have considered the minimization of the negative of the profit function. As such, the optimal objective values of the minimization problems become negative. Consequently, the PoS is theoretically less than or equal to one. This is indeed aligned with the findings in Figure~\ref{fig:pos}.}   %This is indeed aligned with our theoretical statement provided in \eqref{eqn:ci_pos}. 

\begin{table}[H]
\setlength{\tabcolsep}{0pt}
\centering{
 \begin{tabular}{c || c  c  c c}
  {\footnotesize {Setting}\ \ }& {\footnotesize  (1)} & {\footnotesize (2)} & {\footnotesize (3)  } & {\footnotesize (4)  }\\
 \hline\\
\rotatebox[origin=c]{90}{{\footnotesize {sample ave. gap}}}
&
\begin{minipage}{.22\textwidth}
\includegraphics[scale=.25, angle=0]{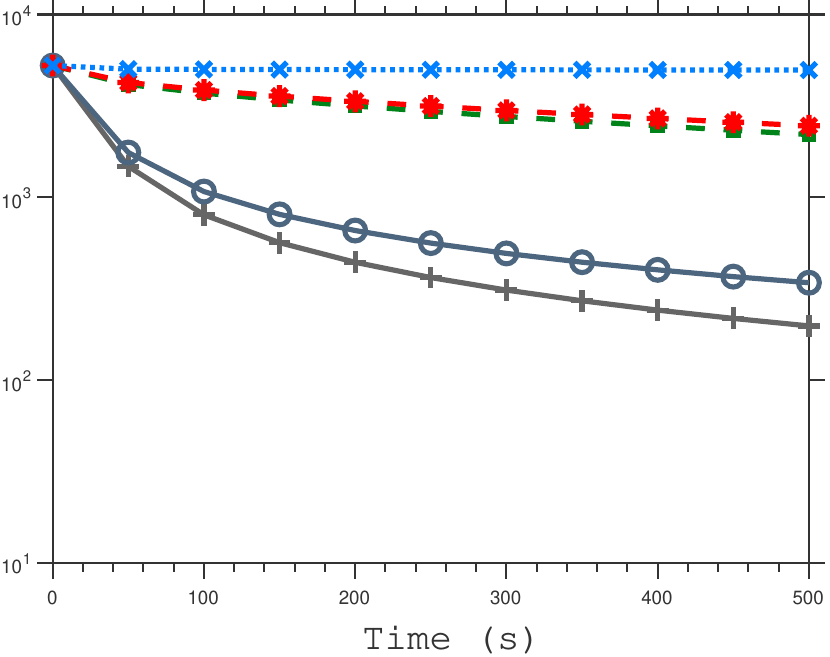}
\end{minipage}
&
\begin{minipage}{.22\textwidth}
\includegraphics[scale=.25, angle=0]{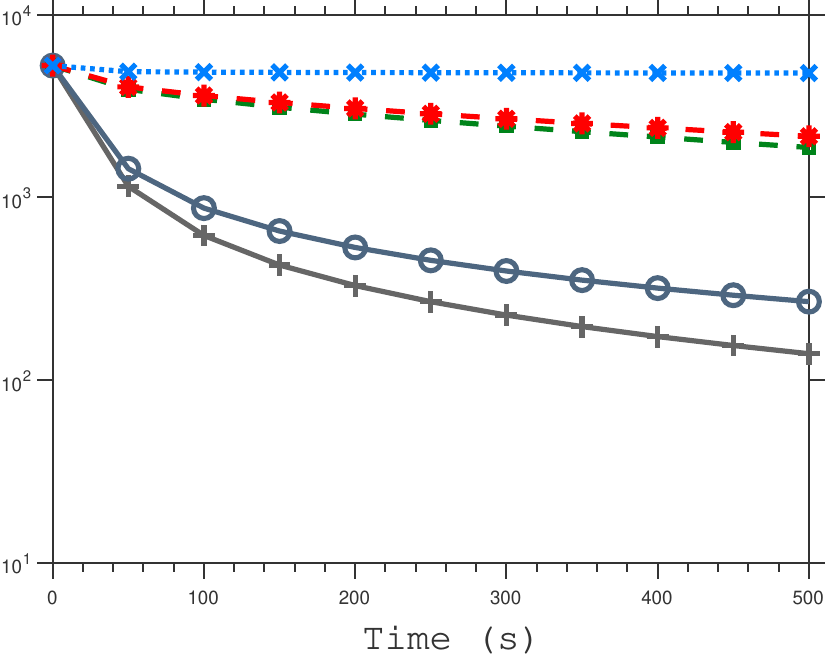}
\end{minipage}
	&
\begin{minipage}{.22\textwidth}
\includegraphics[scale=.25, angle=0]{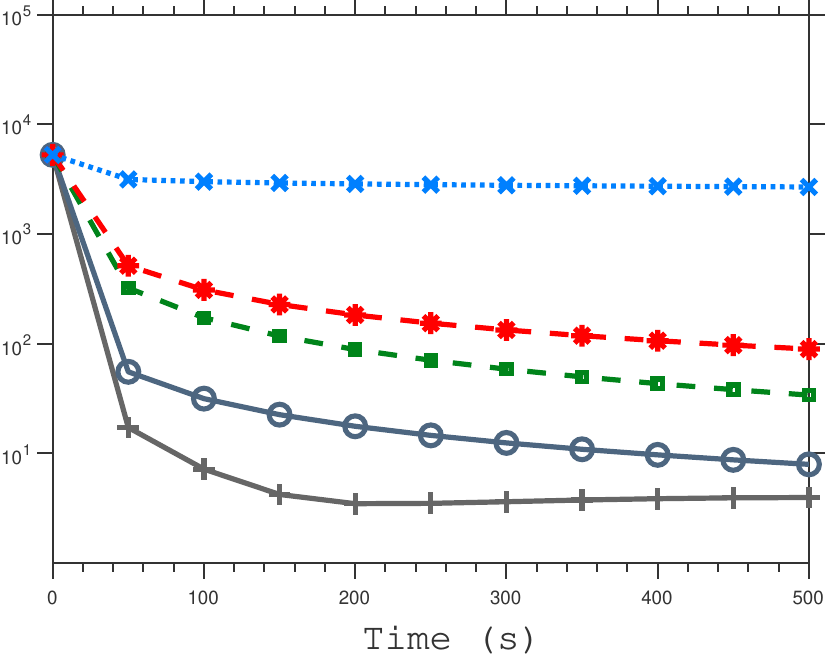}
\end{minipage}
&
\begin{minipage}{.22\textwidth}
\includegraphics[scale=.25, angle=0]{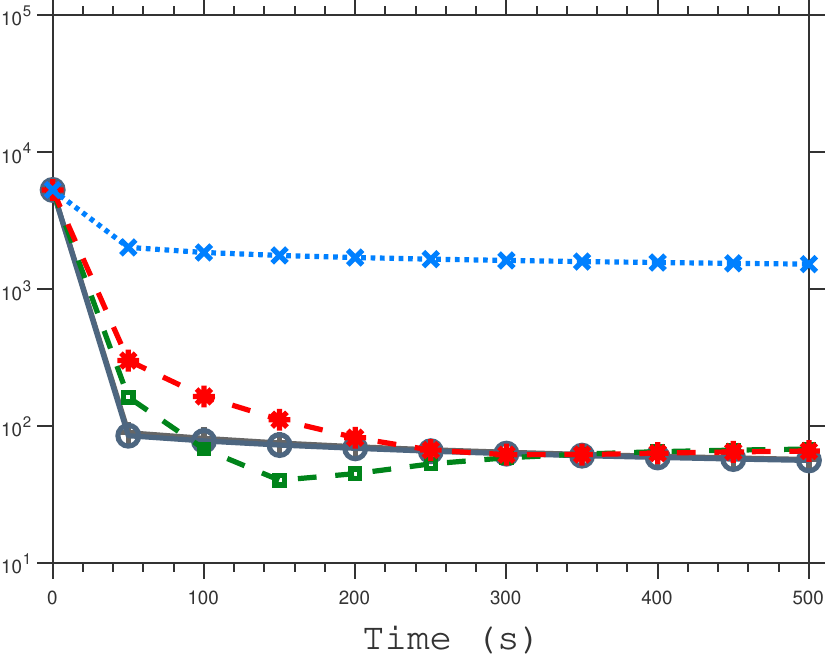}
\end{minipage}
\\
\hbox{}& & & &\\
 \hline\\
\rotatebox[origin=c]{90}{{\footnotesize sample ave. objective}}
&
\begin{minipage}{.22\textwidth}
\includegraphics[scale=.25, angle=0]{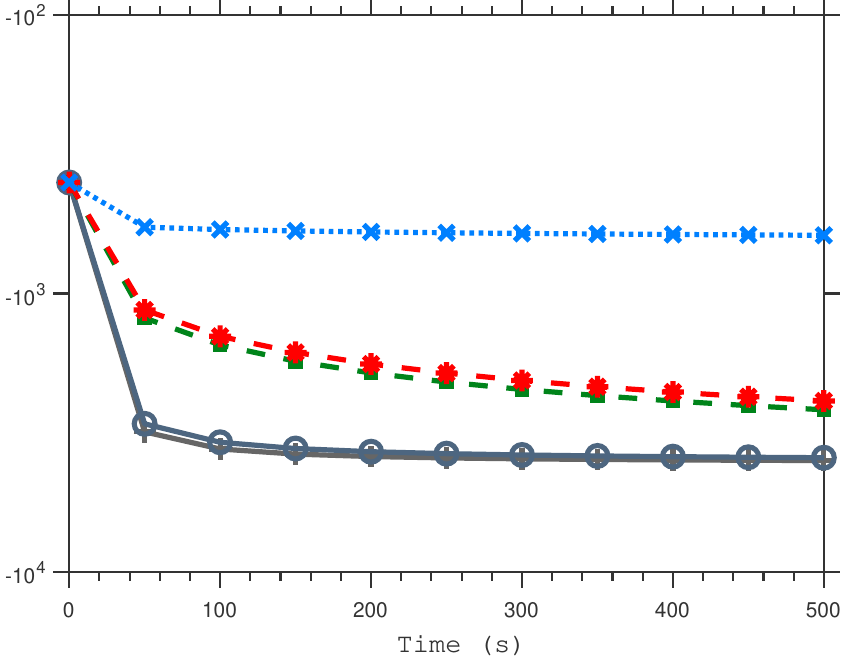}
\end{minipage}
&
\begin{minipage}{.22\textwidth}
\includegraphics[scale=.25, angle=0]{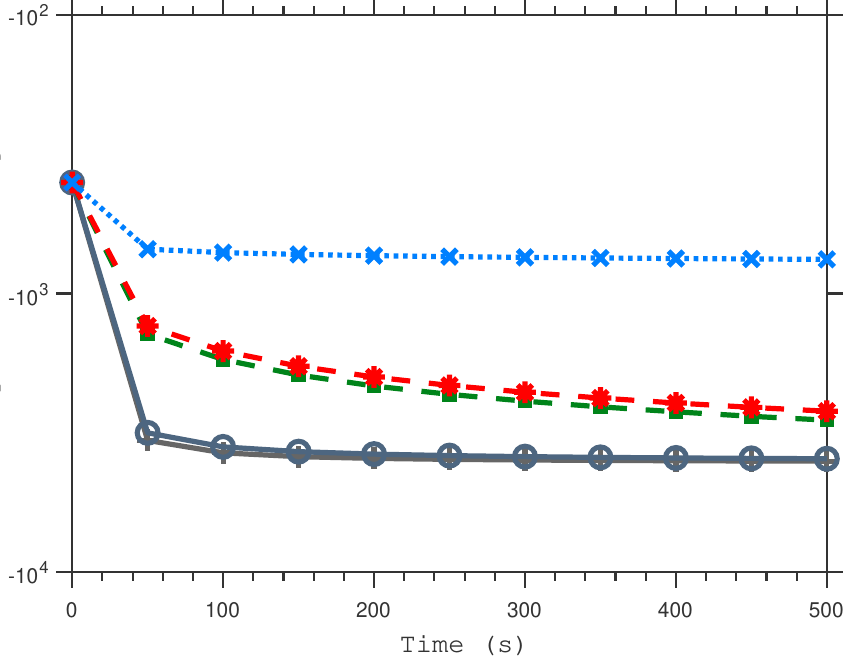}
\end{minipage}
&
\begin{minipage}{.22\textwidth}
\includegraphics[scale=.25, angle=0]{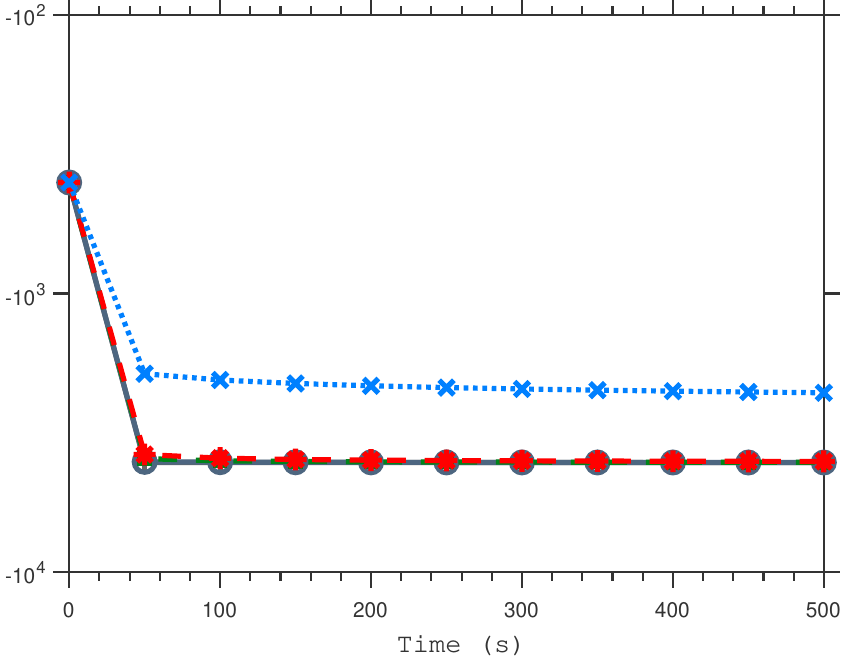}
\end{minipage}
&
\begin{minipage}{.22\textwidth}
\includegraphics[scale=.25, angle=0]{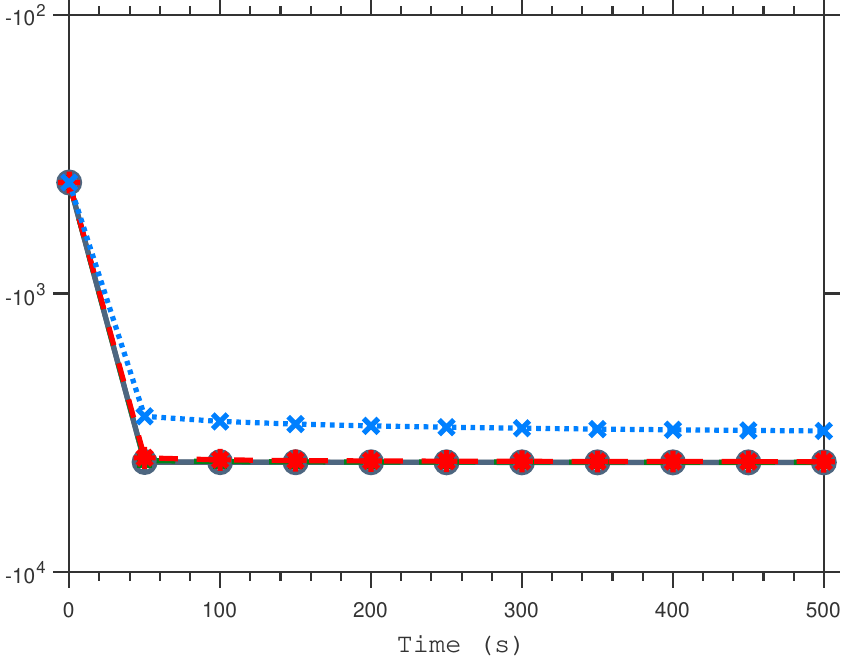}
\end{minipage}
\end{tabular}}
\captionof{figure}{Simulation results for a stochastic Nash Cournot game with 2 players over a network with 2 nodes, comparing Algorithm \eqref{algorithm:aR-IP-SeG} with other existing methods for solving problem \eqref{prob:sopt_svi}.}
\label{fig:comparison1}
\vspace{-.1in}
\end{table}

\begin{table}[H]
\setlength{\tabcolsep}{0pt}
\centering{
 \begin{tabular}{c || c  c  c c}
  {\footnotesize {Setting}\ \ }& {\footnotesize  (1)} & {\footnotesize (2)} & {\footnotesize (3)  } & {\footnotesize (4)  }\\
 \hline\\
\rotatebox[origin=c]{90}{{\footnotesize {sample ave. gap}}}
&
\begin{minipage}{.22\textwidth}
\includegraphics[scale=.25, angle=0]{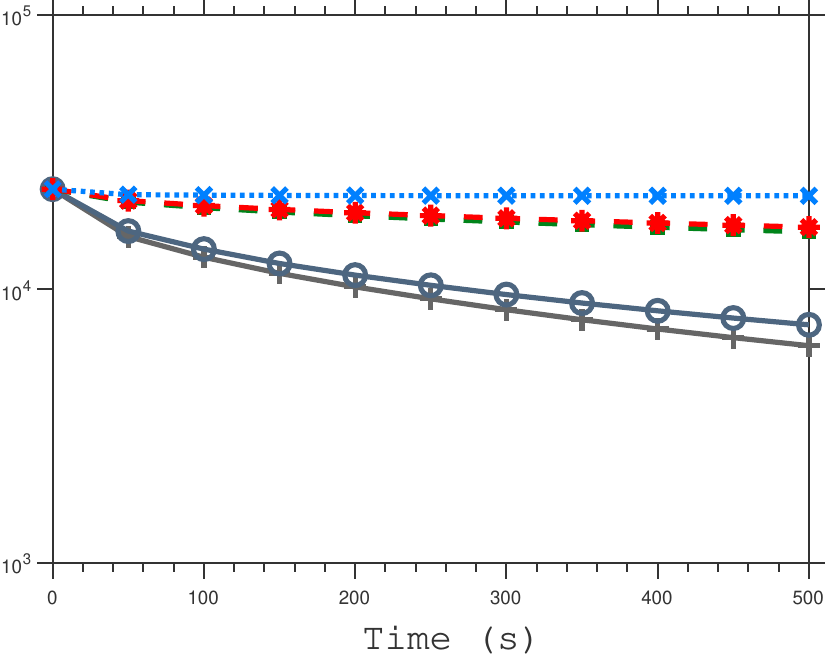}
\end{minipage}
&
\begin{minipage}{.22\textwidth}
\includegraphics[scale=.25, angle=0]{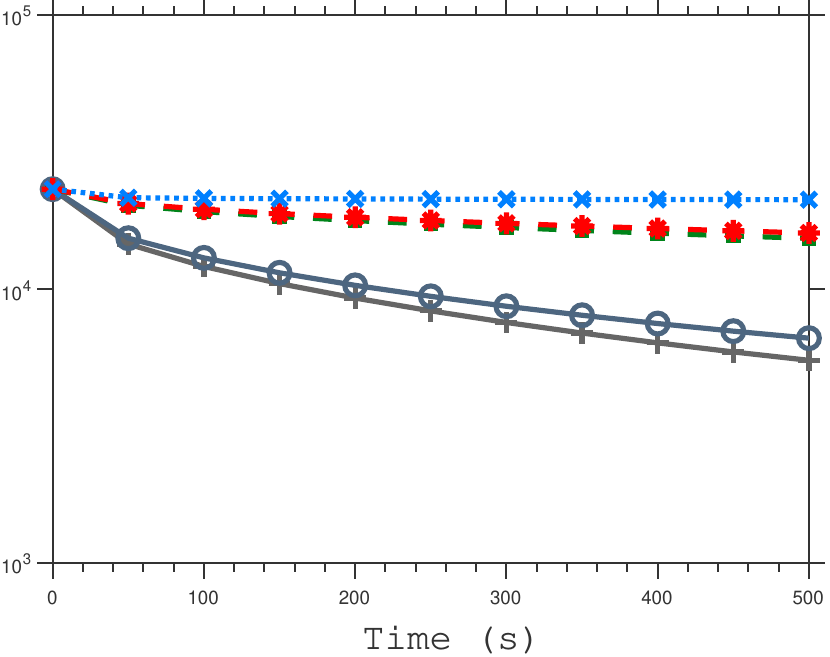}
\end{minipage}
	&
\begin{minipage}{.22\textwidth}
\includegraphics[scale=.25, angle=0]{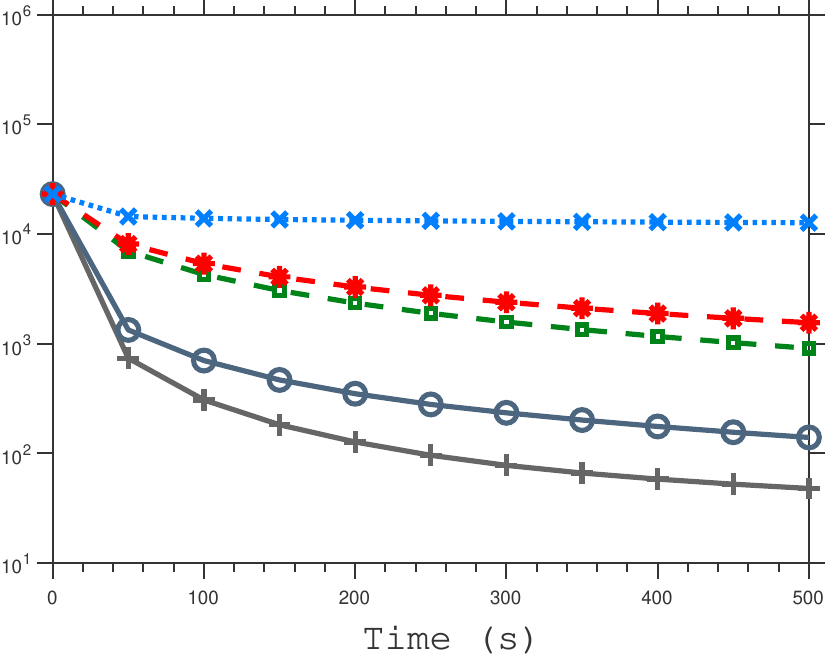}
\end{minipage}
&
\begin{minipage}{.22\textwidth}
\includegraphics[scale=.25, angle=0]{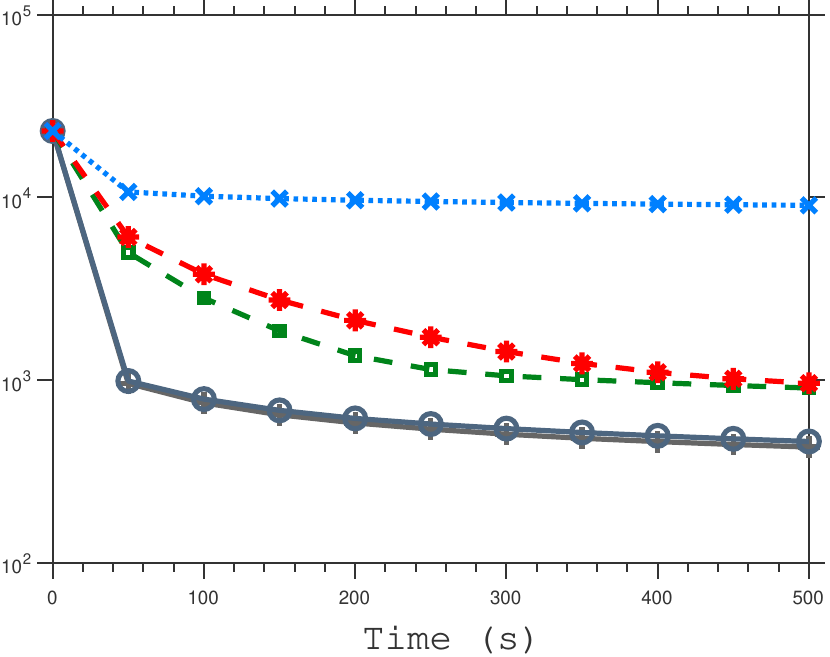}
\end{minipage}
\\
\hbox{}& & & &\\
 \hline\\
\rotatebox[origin=c]{90}{{\footnotesize sample ave. objective}}
&
\begin{minipage}{.22\textwidth}
\includegraphics[scale=.25, angle=0]{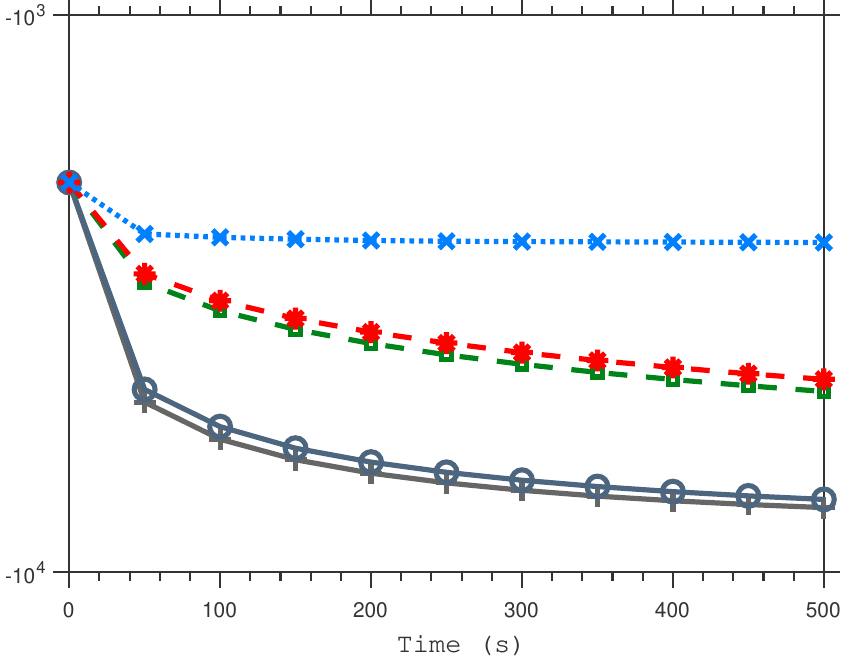}
\end{minipage}
&
\begin{minipage}{.22\textwidth}
\includegraphics[scale=.25, angle=0]{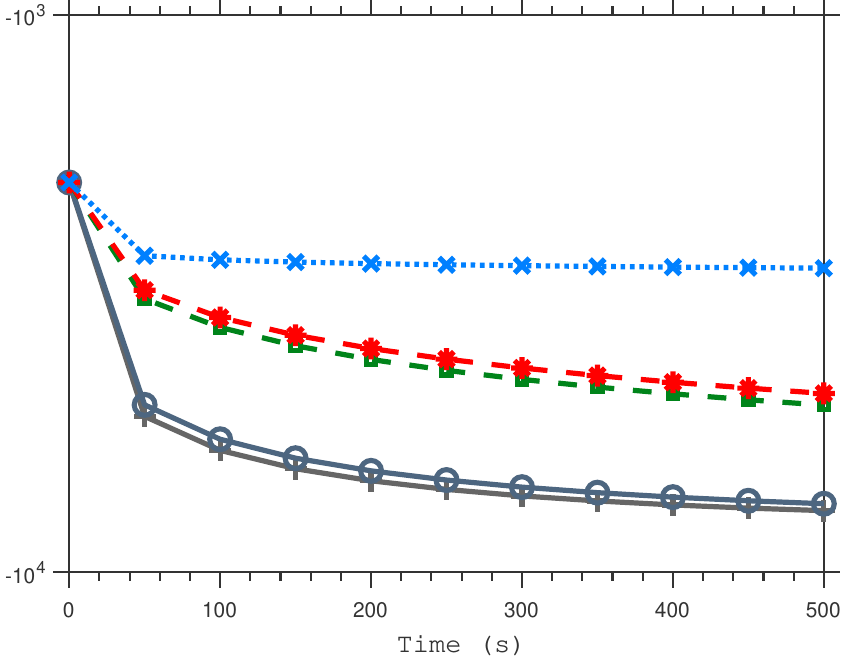}
\end{minipage}
&
\begin{minipage}{.22\textwidth}
\includegraphics[scale=.25, angle=0]{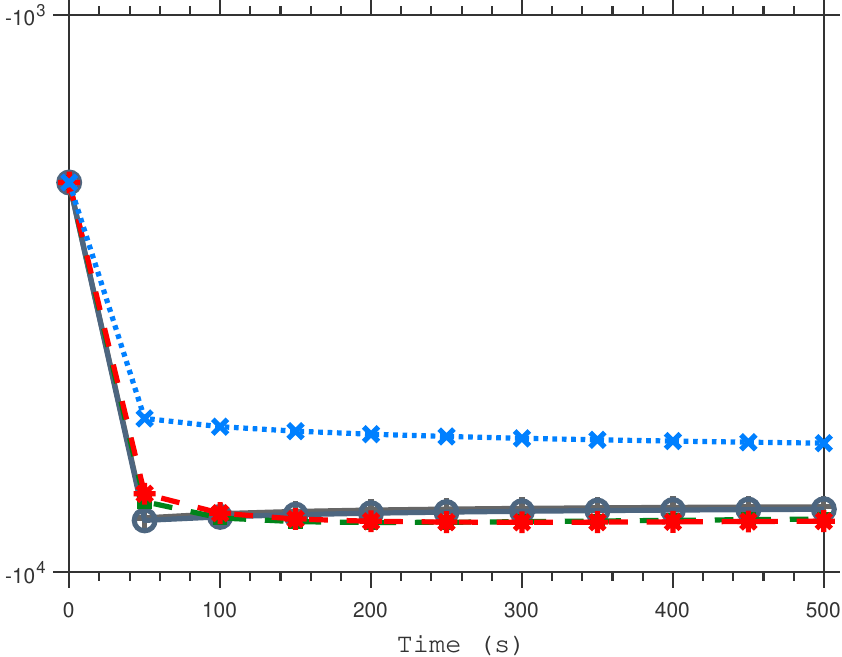}
\end{minipage}
&
\begin{minipage}{.22\textwidth}
\includegraphics[scale=.25, angle=0]{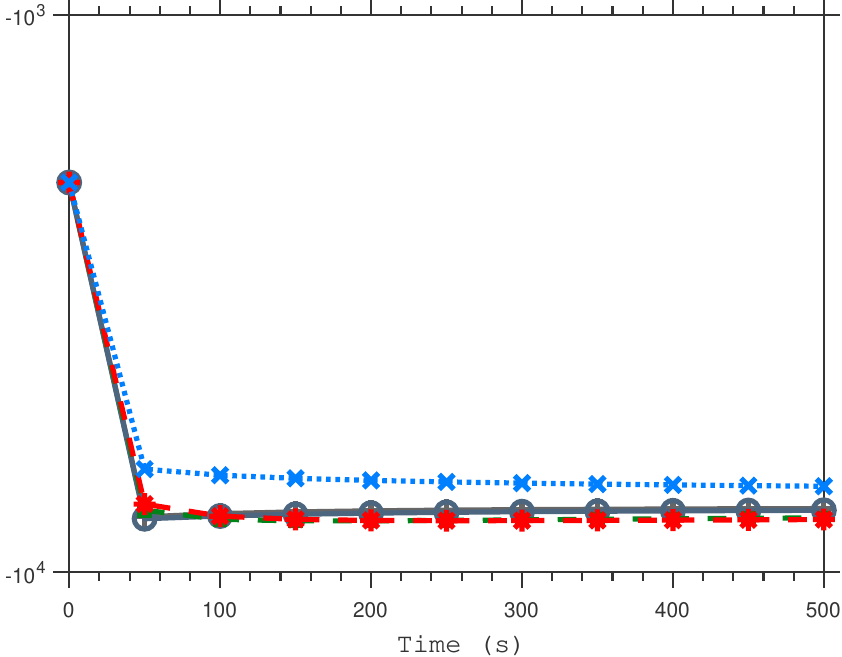}
\end{minipage}
\end{tabular}}
\captionof{figure}{Simulation results for a stochastic Nash Cournot game with 4 players over a network with 5 nodes, comparing Algorithm \eqref{algorithm:aR-IP-SeG} with other existing methods for solving problem \eqref{prob:sopt_svi}.}
\label{fig:comparison2}
\vspace{-.1in}
\end{table}

\begin{table}[H]
\setlength{\tabcolsep}{0pt}
\centering{
 \begin{tabular}{c || c  c  c c}
  {\footnotesize {Setting}\ \ }& {\footnotesize  (1)} & {\footnotesize (2)} & {\footnotesize (3)  } & {\footnotesize (4)  }\\
 \hline\\
\rotatebox[origin=c]{90}{{\footnotesize {sample ave. gap}}}
&
\begin{minipage}{.22\textwidth}
\includegraphics[scale=.25, angle=0]{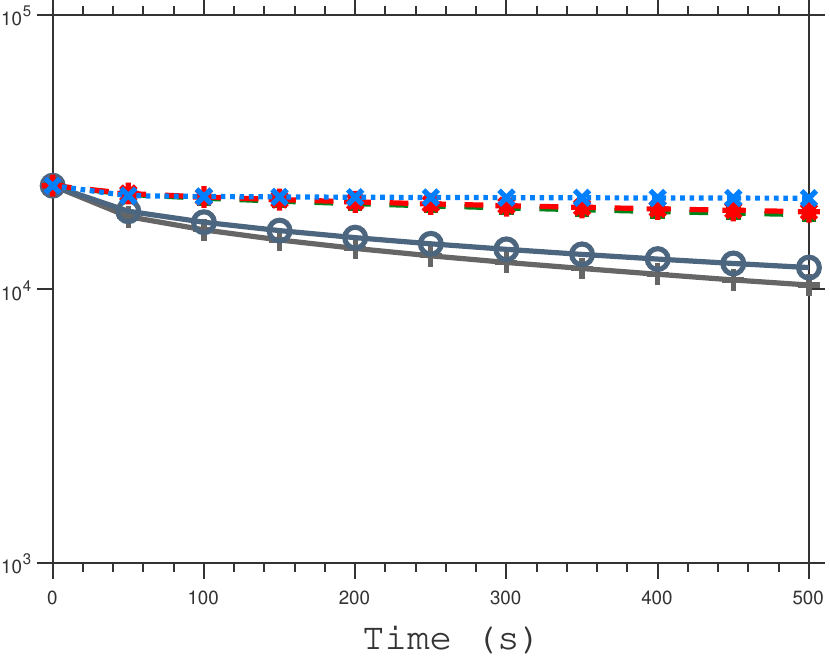}
\end{minipage}
&
\begin{minipage}{.22\textwidth}
\includegraphics[scale=.25, angle=0]{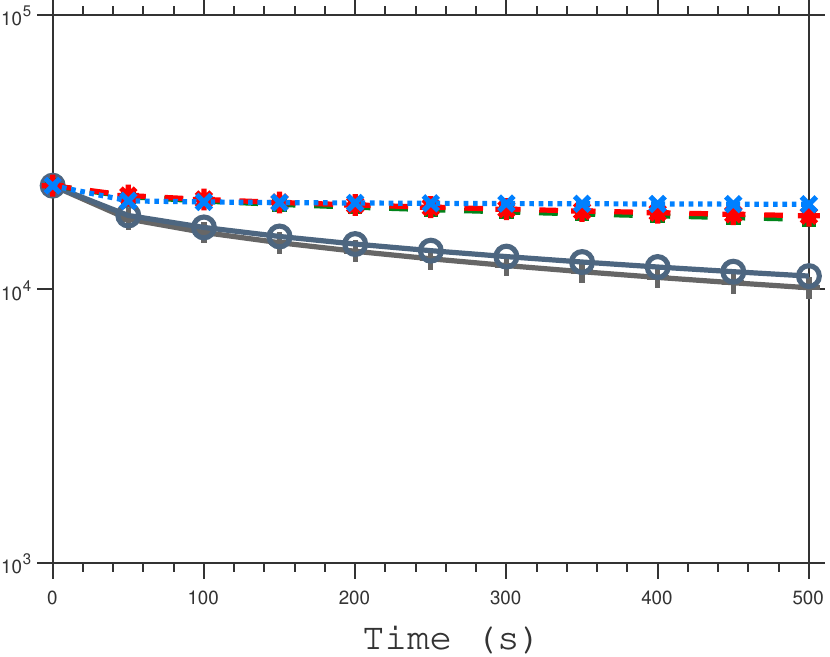}
\end{minipage}
	&
\begin{minipage}{.22\textwidth}
\includegraphics[scale=.25, angle=0]{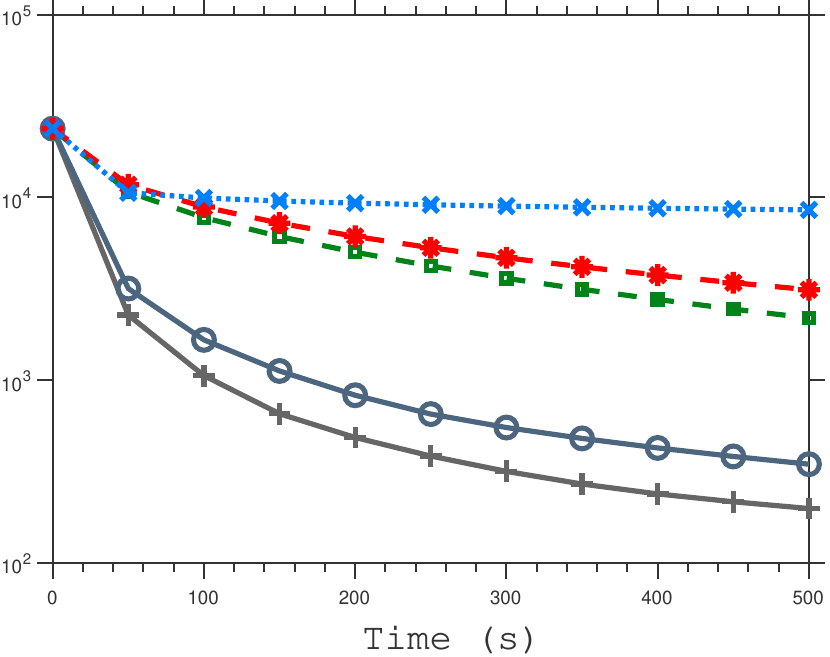}
\end{minipage}
&
\begin{minipage}{.22\textwidth}
\includegraphics[scale=.25, angle=0]{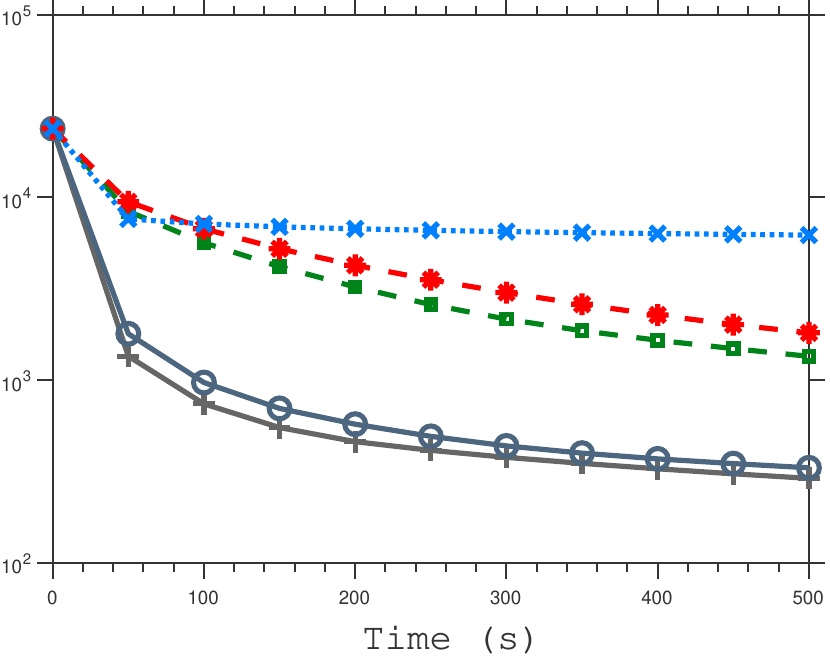}
\end{minipage}
\\
\hbox{}& & & &\\
 \hline\\
\rotatebox[origin=c]{90}{{\footnotesize sample ave. objective}}
&
\begin{minipage}{.22\textwidth}
\includegraphics[scale=.25, angle=0]{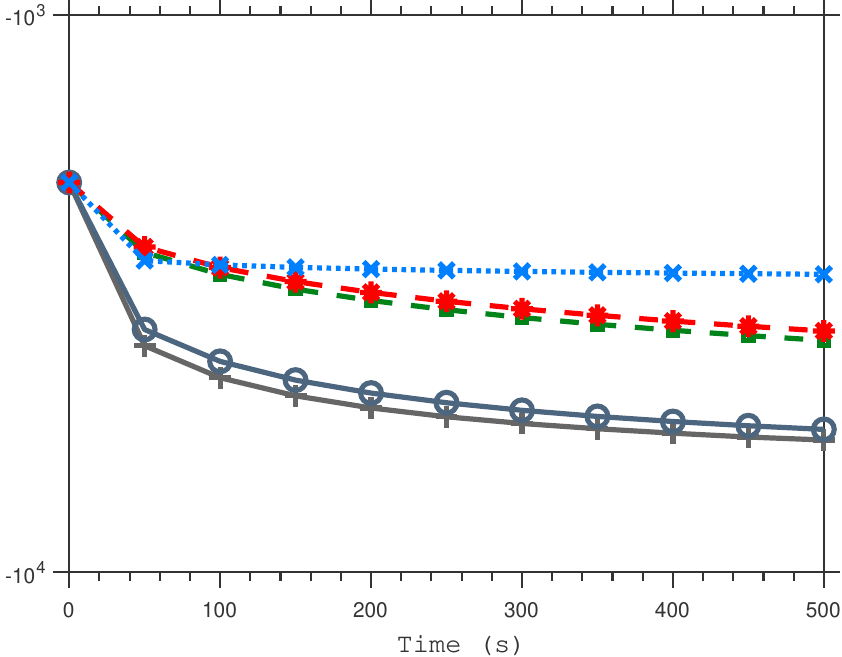}
\end{minipage}
&
\begin{minipage}{.22\textwidth}
\includegraphics[scale=.25, angle=0]{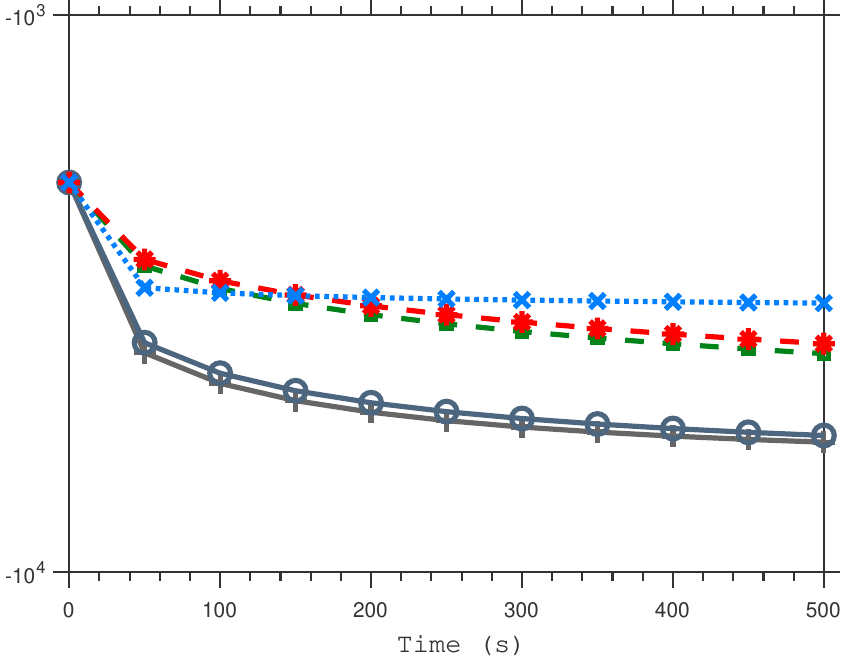}
\end{minipage}
&
\begin{minipage}{.22\textwidth}
\includegraphics[scale=.25, angle=0]{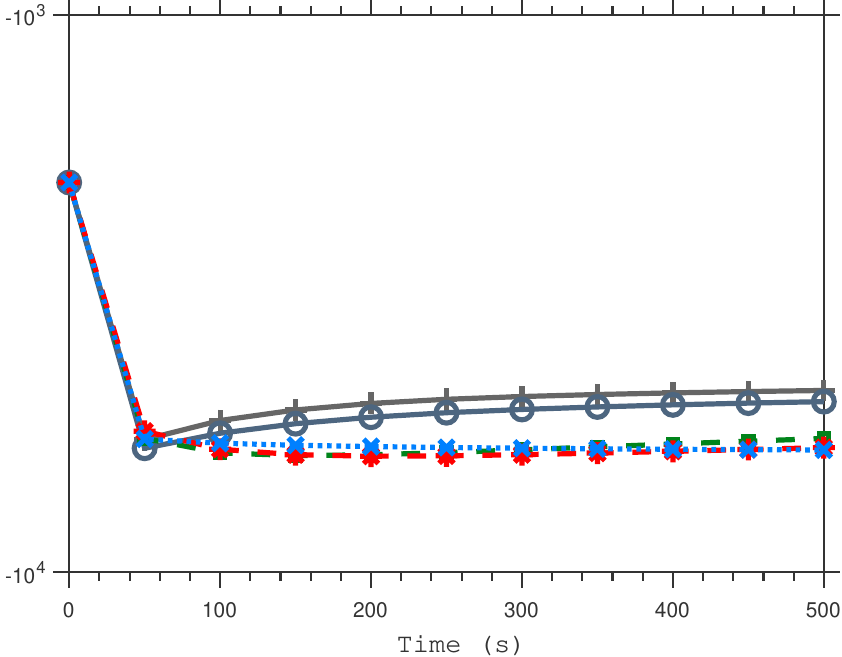}
\end{minipage}
&
\begin{minipage}{.22\textwidth}
\includegraphics[scale=.25, angle=0]{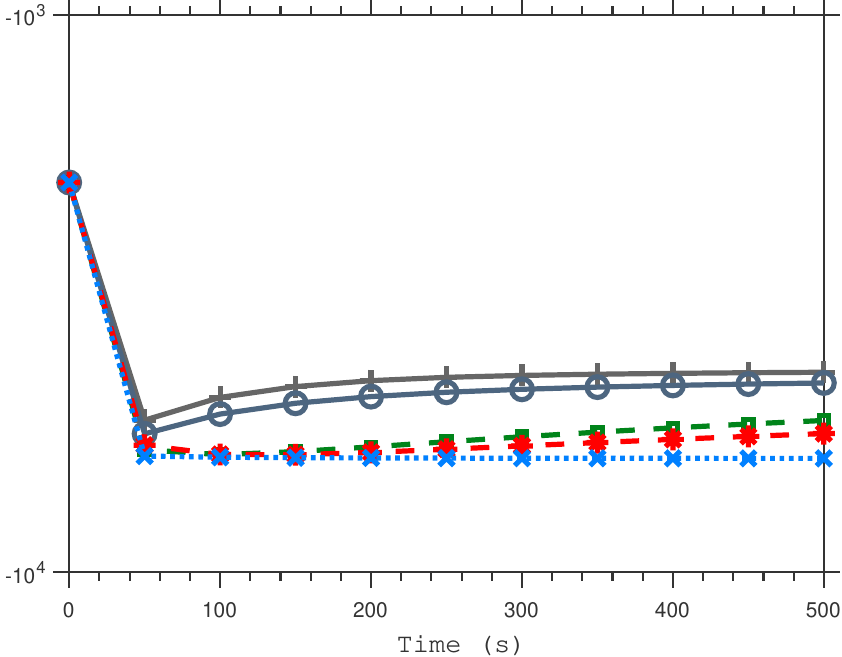}
\end{minipage}
\end{tabular}}
\captionof{figure}{Simulation results for a stochastic Nash Cournot game with 10 players over a network with 2 nodes, comparing Algorithm \eqref{algorithm:aR-IP-SeG} with other existing methods for solving problem \eqref{prob:sopt_svi}.}
\label{fig:comparison3}
\vspace{-.1in}
\end{table}

\begin{table}[H]
\setlength{\tabcolsep}{0pt}
\centering{
 \begin{tabular}{c || c  c  c c}
  {\footnotesize {Setting}\ \ }& {\footnotesize  (1)} & {\footnotesize (2)} & {\footnotesize (3)  } & {\footnotesize (4)  }\\
 \hline\\
\rotatebox[origin=c]{90}{{\footnotesize {sample ave. gap}}}
&
\begin{minipage}{.22\textwidth}
\includegraphics[scale=.25, angle=0]{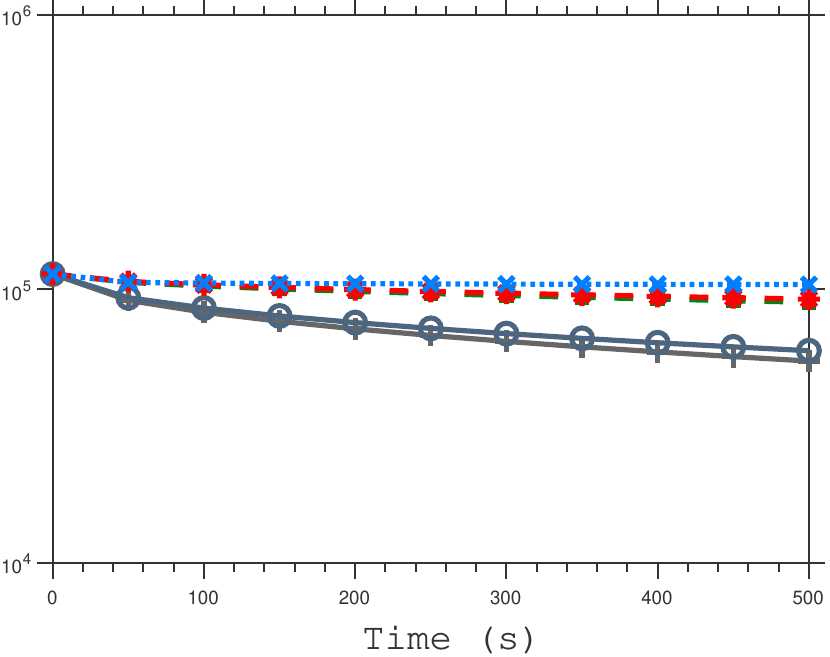}
\end{minipage}
&
\begin{minipage}{.22\textwidth}
\includegraphics[scale=.25, angle=0]{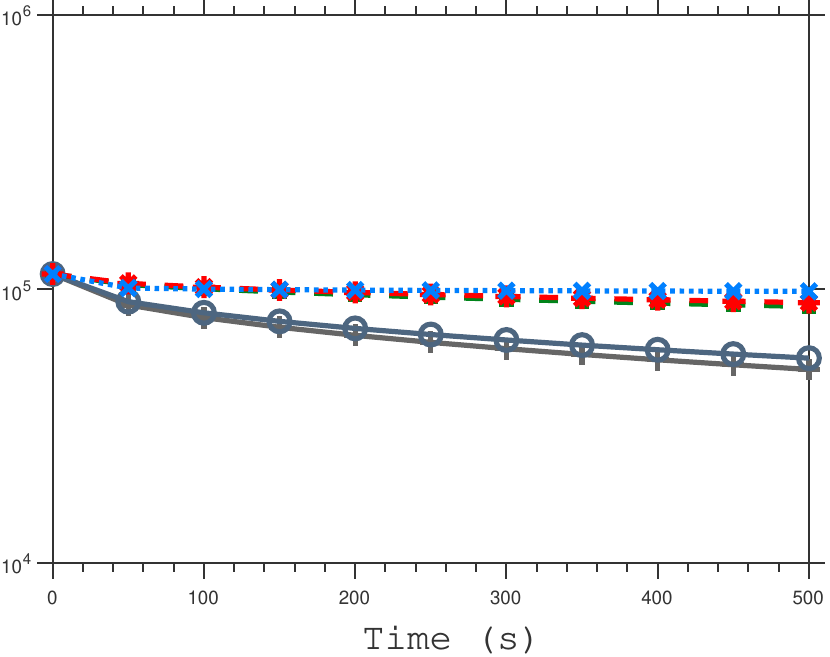}
\end{minipage}
	&
\begin{minipage}{.22\textwidth}
\includegraphics[scale=.25, angle=0]{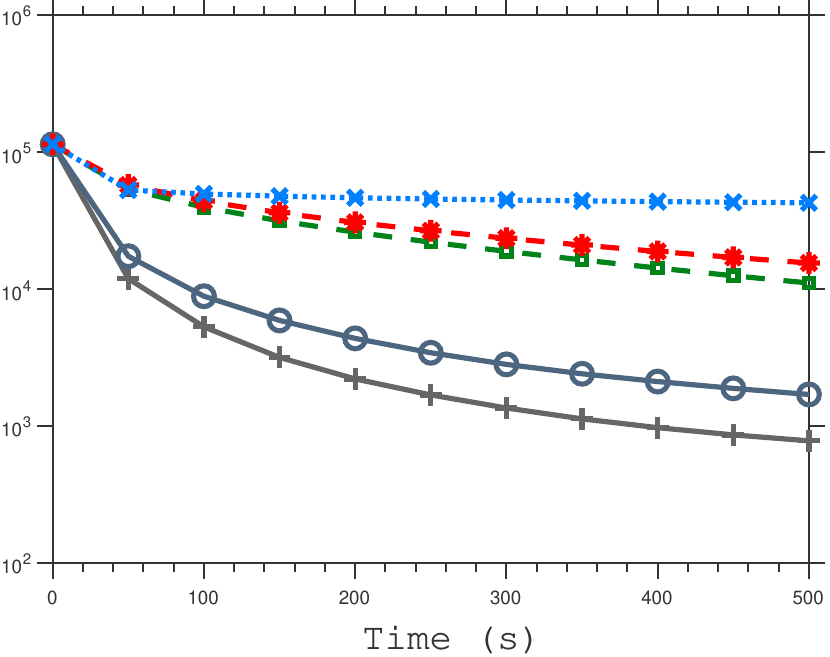}
\end{minipage}
&
\begin{minipage}{.22\textwidth}
\includegraphics[scale=.25, angle=0]{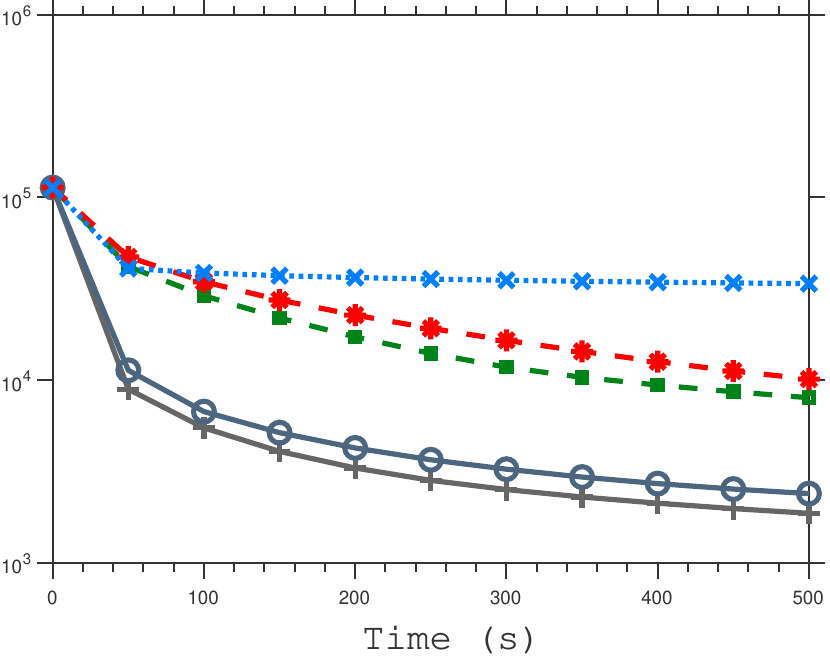}
\end{minipage}
\\
\hbox{}& & & &\\
 \hline\\
\rotatebox[origin=c]{90}{{\footnotesize sample ave. objective}}
&
\begin{minipage}{.22\textwidth}
\includegraphics[scale=.25, angle=0]{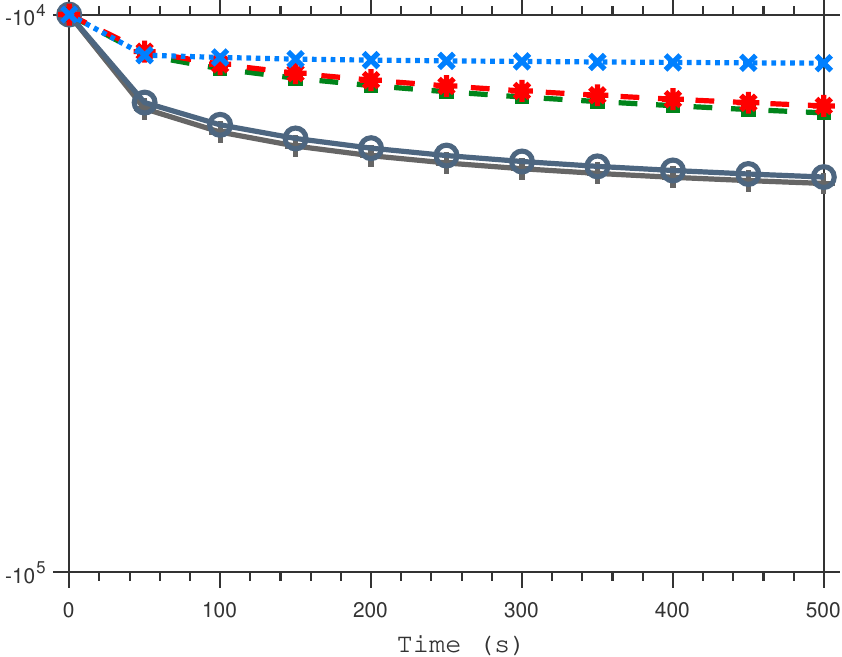}
\end{minipage}
&
\begin{minipage}{.22\textwidth}
\includegraphics[scale=.25, angle=0]{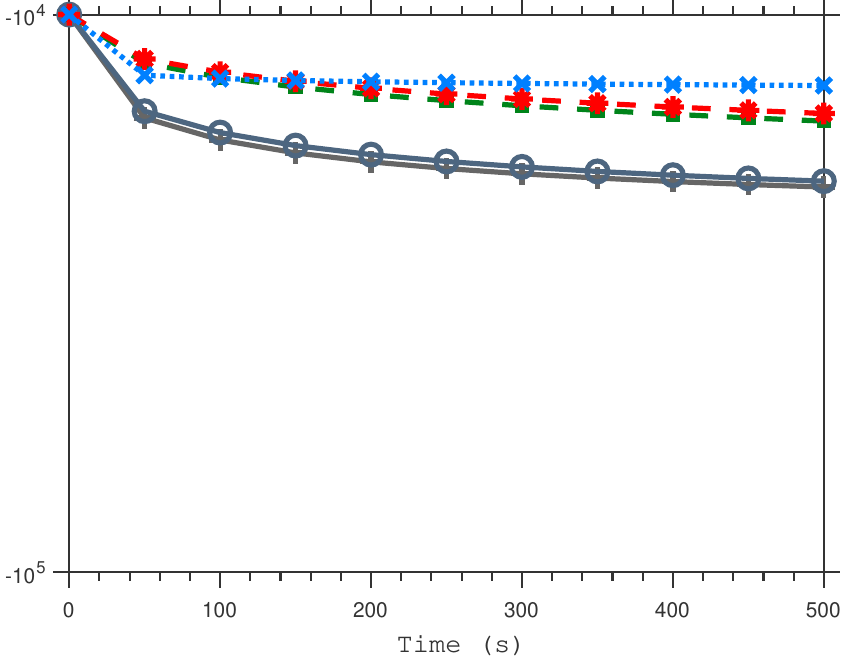}
\end{minipage}
&
\begin{minipage}{.22\textwidth}
\includegraphics[scale=.25, angle=0]{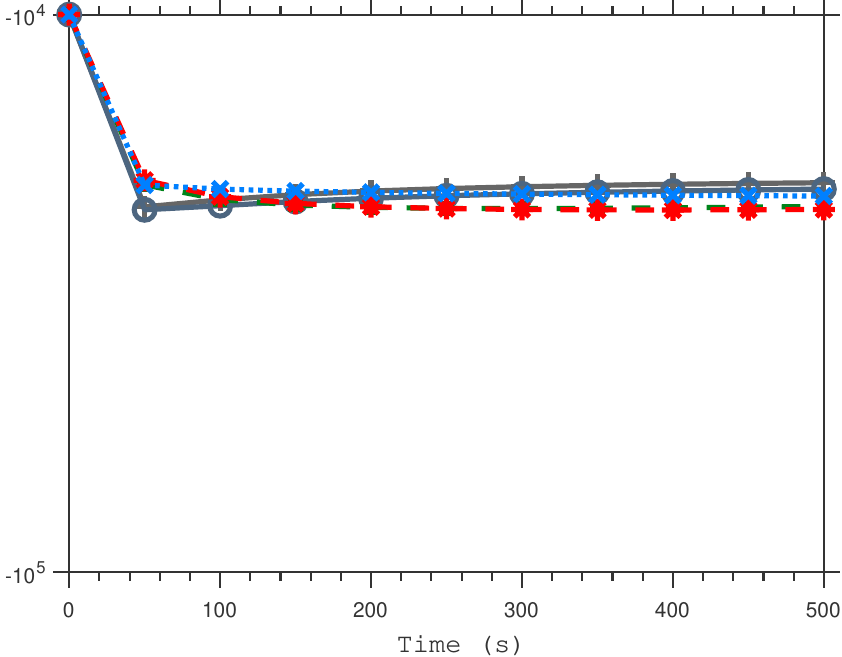}
\end{minipage}
&
\begin{minipage}{.22\textwidth}
\includegraphics[scale=.25, angle=0]{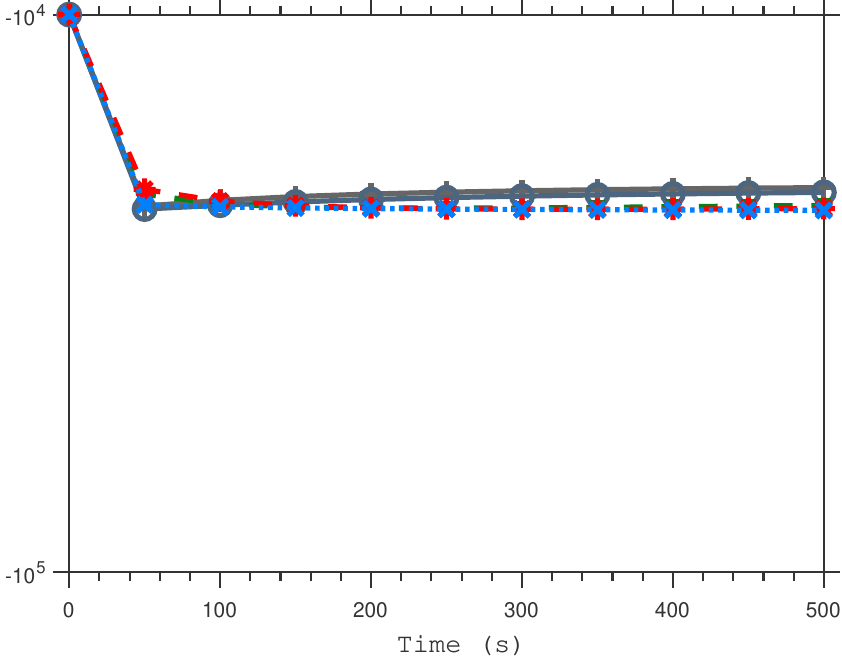}
\end{minipage}
\end{tabular}}
\captionof{figure}{Simulation results for a stochastic Nash Cournot game with 10 players over a network with 10 nodes, comparing Algorithm \eqref{algorithm:aR-IP-SeG} with other existing methods for solving problem \eqref{prob:sopt_svi}.}
\label{fig:comparison4}
\vspace{-.1in}
\end{table} 

\begin{figure}[H]
\centering
\begin{subfigure}{.5\textwidth}
  \centering
  \includegraphics[width=.8\linewidth]{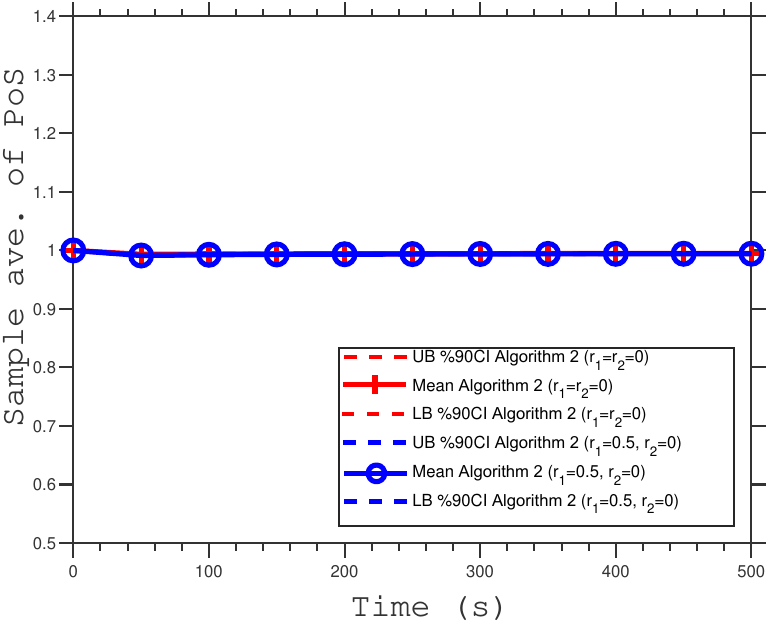}
  \caption{Cournot game with 2 players and network 2 nodes}
  \label{fig:sub1}
\end{subfigure}%
\begin{subfigure}{.5\textwidth}
  \centering
  \includegraphics[width=.8\linewidth]{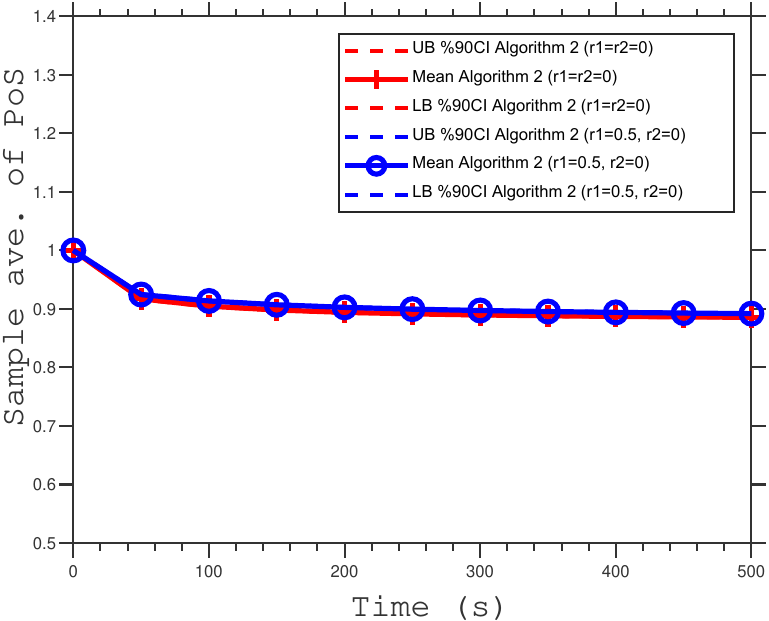}
  \caption{Cournot game with 4 players and network 5 nodes}
  \label{fig:sub2}
\end{subfigure}
\begin{subfigure}{.5\textwidth}
  \centering
  \includegraphics[width=.8\linewidth]{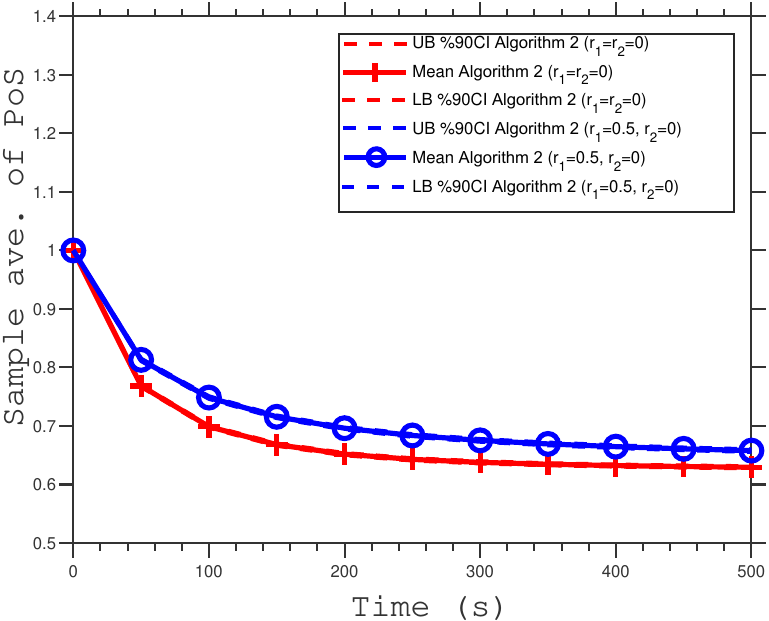}
  \caption{Cournot game with 10 players and network 2 nodes}
  \label{fig:sub2}
\end{subfigure}%
\begin{subfigure}{.5\textwidth}
  \centering
  \includegraphics[width=.8\linewidth]{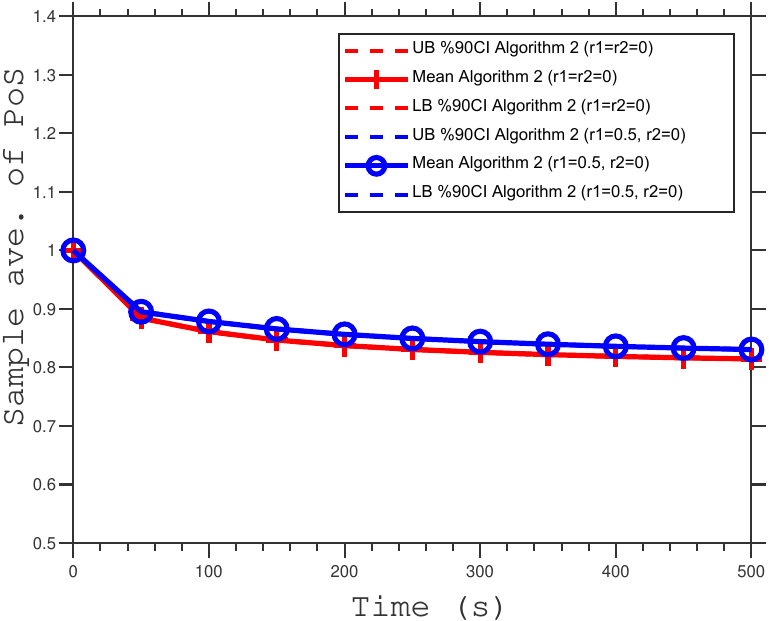}
  \caption{Cournot game with 10 players and network 10 nodes}
  \label{fig:sub2}
\end{subfigure}
\caption{Performance of Algorithm~\ref{algorithm:pos} in estimating PoS. 90\% confidence intervals become tighter as the scheme proceeds.}
\label{fig:pos}
\end{figure}

\section{Acknowledgments}
 This work is supported by the NSF CAREER grant ECCS-1944500, \fa{NSF grant ECCS-2231863,} and ONR grant N00014-22-1-2757.
\bibliographystyle{amsplain}
\bibliography{biblio,ref_fy}

\providecommand{\bysame}{\leavevmode\hbox to3em{\hrulefill}\thinspace}
\providecommand{\MR}{\relax\ifhmode\unskip\space\fi MR }
% \MRhref is called by the amsart/book/proc definition of \MR.
\providecommand{\MRhref}[2]{%
  \href{http://www.ams.org/mathscinet-getitem?mr=#1}{#2}
}
\providecommand{\href}[2]{#2}
\begin{thebibliography}{10}

\bibitem{PoS08}
E.~Anshelevich, A.~Dasgupta, J.~Kleinberg, E.~Tardos, T.~Wexler, and
  T.~Roughgarden, \emph{The price of stability for network design with fair
  cost allocation}, SIAM Journal on Computing \textbf{38} (2008), no.~4,
  1602–--1623.

\bibitem{bertsekas1997nonlinear}
D.~P. Bertsekas, \emph{Nonlinear programming}, Journal of the Operational
  Research Society \textbf{48} (1997), no.~3, 334--334.

\bibitem{bertsekas2003convex}
D.~P. Bertsekas, A.~Nedi{\'c}, and A.~Ozdaglar, \emph{Convex analysis and
  optimization}, vol.~1, Athena Scientific, 2003.

\bibitem{broadie2014multidimensional}
M.~Broadie, D.~M. Cicek, and A.~Zeevi, \emph{Multidimensional stochastic
  approximation: Adaptive algorithms and applications}, ACM Transactions on
  Modeling and Computer Simulation (TOMACS) \textbf{24} (2014), no.~1, 1--28.

\bibitem{censor2011subgradient}
Y.~Censor, A.~Gibali, and S.~Reich, \emph{The subgradient extragradient method
  for solving variational inequalities in hilbert space}, Journal of
  Optimization Theory and Applications \textbf{148} (2011), no.~2, 318--335.

\bibitem{censor2012extensions}
\bysame, \emph{Extensions of {K}orpelevich's extragradient method for the
  variational inequality problem in {E}uclidean space}, Optimization
  \textbf{61} (2012), no.~9, 1119--1132.

\bibitem{chen2017accelerated}
Y.~Chen, G.~Lan, and Y.~Ouyang, \emph{Accelerated schemes for a class of
  variational inequalities}, Mathematical Programming \textbf{165} (2017),
  no.~1, 113--149.

\bibitem{cui2016analysis}
S.~Cui and U.~V. Shanbhag, \emph{On the analysis of reflected gradient and
  splitting methods for monotone stochastic variational inequality problems},
  2016 IEEE 55th Conference on Decision and Control (CDC), IEEE, 2016,
  pp.~4510--4515.

\bibitem{cui2021analysis}
\bysame, \emph{On the analysis of variance-reduced and randomized projection
  variants of single projection schemes for monotone stochastic variational
  inequality problems}, Set-Valued and Variational Analysis \textbf{29} (2021),
  no.~2, 453--499.

\bibitem{distrobustfl20}
Y.~Deng, M.~M. Kamani, and M.~Mahdavi, \emph{Distributionally robust federated
  averaging}, Advances in Neural Information Processing Systems (H.~Larochelle,
  M.~Ranzato, R.~Hadsell, M.F. Balcan, and H.~Lin, eds.), vol.~33, Curran
  Associates, Inc., 2020, pp.~15111--15122.

\bibitem{Dubey86}
P.~Dubey, \emph{Inefficiency of {N}ash equilibria}, Mathematics of Operations
  Research \textbf{11} (1986), no.~1, 1--8.

\bibitem{FacchineiPang2003}
F.~Facchinei and J.-S. Pang, \emph{Finite-dimensional variational inequalities
  and complementarity problems. {V}ols. {I,II}}, Springer Series in Operations
  Research, Springer-Verlag, New York, 2003.

\bibitem{GAN14}
I.~Goodfellow, J.~Pouget-Abadie, Me. Mirza, B.~Xu, D.~Warde-Farley, S.~Ozair,
  A.~Courville, and Y.~Bengio, \emph{Generative adversarial nets}, Advances in
  Neural Information Processing Systems (Z.~Ghahramani, M.~Welling, C.~Cortes,
  N.~Lawrence, and K.Q. Weinberger, eds.), vol.~27, Curran Associates, Inc.,
  2014.

\bibitem{hsieh2019convergence}
Y.-G. Hsieh, F.~Iutzeler, J.~Malick, and P.~Mertikopoulos, \emph{On the
  convergence of single-call stochastic extra-gradient methods}, Advances in
  Neural Information Processing Systems \textbf{32} (2019).

\bibitem{iusem2017extragradient}
A.~N. Iusem, A.~Jofr{\'e}, R.~I. Oliveira, and P.~Thompson, \emph{Extragradient
  method with variance reduction for stochastic variational inequalities}, SIAM
  Journal on Optimization \textbf{27} (2017), no.~2, 686--724.

\bibitem{iusem2011korpelevich}
A.~N. Iusem and M.~Nasri, \emph{Korpelevich’s method for variational
  inequality problems in banach spaces}, Journal of Global Optimization
  \textbf{50} (2011), no.~1, 59--76.

\bibitem{jalilzadeh2019proximal}
A.~Jalilzadeh and U.~V. Shanbhag, \emph{A proximal-point algorithm with
  variable sample-sizes (ppawss) for monotone stochastic variational inequality
  problems}, 2019 Winter Simulation Conference (WSC), IEEE, 2019,
  pp.~3551--3562.

\bibitem{jalilzadeh2018smoothed}
A.~Jalilzadeh, U.~V. Shanbhag, J.~H. Blanchet, and P.~W. Glynn, \emph{Smoothed
  variable sample-size accelerated proximal methods for nonsmooth stochastic
  convex programs}, Stochastic Systems \textbf{12} (2022), no.~4, 373--410.

\bibitem{jiang2008stochastic}
H.~Jiang and H.~Xu, \emph{Stochastic approximation approaches to the stochastic
  variational inequality problem}, IEEE Transactions on Automatic Control
  \textbf{53} (2008), no.~6, 1462--1475.

\bibitem{JohariThesis}
R.~Johari, \emph{Efficiency loss in market mechanisms for resource allocation},
  Ph.D. thesis, MIT, 2004.

\bibitem{juditsky2011solving}
A.~Juditsky, A.~Nemirovski, and C.~Tauvel, \emph{Solving variational
  inequalities with stochastic mirror-prox algorithm}, Stochastic Systems
  \textbf{1} (2011), no.~1, 17--58.

\bibitem{KannanShanbhag2012}
A.~Kannan and U.~V. Shanbhag, \emph{Distributed computation of equilibria in
  monotone {N}ash games via iterative regularization techniques}, SIAM Journal
  on Optimization \textbf{22} (2012), no.~4, 1177--1205.

\bibitem{kannan2014pseudomonotone}
A.~Kannan and U.~V. Shanbhag, \emph{The pseudomonotone stochastic variational
  inequality problem: Analytical statements and stochastic extragradient
  schemes}, 2014 American Control Conference, IEEE, 2014, pp.~2930--2935.

\bibitem{kannan2019optimal}
\bysame, \emph{Optimal stochastic extragradient schemes for pseudomonotone
  stochastic variational inequality problems and their variants}, Computational
  Optimization and Applications \textbf{74} (2019), no.~3, 779--820.

\bibitem{kaushik2021method}
H.~D. Kaushik and F.~Yousefian, \emph{A method with convergence rates for
  optimization problems with variational inequality constraints}, SIAM Journal
  on Optimization \textbf{31} (2021), no.~3, 2171--2198.

\bibitem{korpelevich1976extragradient}
G.~M. Korpelevich, \emph{The extragradient method for finding saddle points and
  other problems}, Matecon \textbf{12} (1976), 747--756.

\bibitem{koshal12regularized}
J.~Koshal, A.~Nedi\'{c}, and U.~V. Shanbhag, \emph{Regularized iterative
  stochastic approximation methods for variational inequality problems}, IEEE
  Transactions on Automatic Control \textbf{58(3)} (2013), 594--609.

\bibitem{marcotte1998weak}
P.~Marcotte and D.~Zhu, \emph{Weak sharp solutions of variational
  inequalities}, SIAM Journal on Optimization \textbf{9} (1998), no.~1,
  179--189.

\bibitem{ned09}
A.~Nedi\'c and A.~Ozdaglar, \emph{Subgradient methods for saddle-point
  problems}, Journal of Optimization Theory and Applications \textbf{142}
  (2009), 205--228.

\bibitem{Nem04}
A.~Nemirovski, \emph{Prox-method with rate of convergence o(1/t) for
  variational inequalities with lipschitz continuous monotone operators and
  smooth convex-concave saddle point problems}, SIAM Journal on Optimization
  \textbf{15} (2004), no.~1, 229--251.

\bibitem{nemirovski_robust_2009}
A.~Nemirovski, A.~Juditsky, G.~Lan, and A.~Shapiro, \emph{Robust stochastic
  approximation approach to stochastic programming}, {SIAM} Journal on
  Optimization \textbf{19} (2009), no.~4, 1574--1609.

\bibitem{Polyak92acceleration}
B.~T. Polyak and A.~B. Juditsky, \emph{Acceleration of stochastic approximation
  by averaging}, SIAM J. Control Optim. \textbf{30} (1992), no.~4, 838--855.
  \MR{1167814 (93g:62110)}

\bibitem{robbins51sa}
H.~Robbins and S.~Monro, \emph{A stochastic approximation method}, Ann. Math.
  Statistics \textbf{22} (1951), 400--407.

\bibitem{roughgarden2010algorithmic}
T.~Roughgarden, \emph{Algorithmic game theory}, Communications of the ACM
  \textbf{53} (2010), no.~7, 78--86.

\bibitem{sibony1970methodes}
M.~Sibony, \emph{M{\'e}thodes it{\'e}ratives pour les {\'e}quations et
  in{\'e}quations aux d{\'e}riv{\'e}es partielles non lin{\'e}aires de type
  monotone}, Calcolo \textbf{7} (1970), no.~1, 65--183.

\bibitem{minimax28}
v.~Neumann, \emph{Zur theorie der gesellschaftsspiele}, Mathematische Annalens
  \textbf{19} (1928), no.~2, 295--320.

\bibitem{fairgan18}
D.~Xu, S.~Yuan, L.~Zhang, and X.~Wu, \emph{{Fairgan}: Fairness-aware generative
  adversarial networks}, 2018 IEEE International Conference on Big Data (Big
  Data), IEEE, 2018, pp.~570--575.

\bibitem{Farzad1}
F.~Yousefian, A.~Nedi{\'c}, and U.~V. Shanbhag, \emph{On stochastic gradient
  and subgradient methods with adaptive steplength sequences}, Automatica
  \textbf{48} (2012), no.~1, 56--67, An extended version of the paper available
  at: http://arxiv.org/abs/1105.4549.

\bibitem{yousefian2013regularized}
\bysame, \emph{A regularized smoothing stochastic approximation ({RSSA})
  algorithm for stochastic variational inequality problems}, 2013 Winter
  Simulations Conference (WSC), IEEE, 2013, pp.~933--944.

\bibitem{yousefian2014optimal}
\bysame, \emph{Optimal robust smoothing extragradient algorithms for stochastic
  variational inequality problems}, 53rd IEEE Conference on Decision and
  Control, IEEE, 2014, pp.~5831--5836.

\bibitem{yousefian2017smoothing}
\bysame, \emph{On smoothing, regularization, and averaging in stochastic
  approximation methods for stochastic variational inequality problems},
  Mathematical Programming \textbf{165} (2017), no.~1, 391--431.

\bibitem{yousefian2018stochastic}
\bysame, \emph{On stochastic mirror-prox algorithms for stochastic cartesian
  variational inequalities: Randomized block coordinate and optimal averaging
  schemes}, Set-Valued and Variational Analysis \textbf{26} (2018), no.~4,
  789--819.

\end{thebibliography}

\end{document}